\crefname{hypothesis}{Hypothesis}{Hypotheses}
\title{M\"untz Pseudo Spectral Method: Theory and Numerical Experiments}
\author{Hassan Khosravian-Arab\thanks{Department of Applied Mathematics, Faculty of Mathematical Sciences, Tarbiat Modares University, P.O. Box 14115-134, Tehran, Iran 
  (\email{h.khosravian@aut.ac.ir, \ h.khosravian@modares.ac.ir}, \email{eslahchi@modares.ac.ir}).}
\and M. R. Eslahchi\footnotemark[1]}
\begin{document}

\maketitle

\begin{abstract}
  This paper presents two new non-classical Lagrange basis 
  functions which are based on the new Jacobi-M\"untz functions presented by the authors recently. These basis functions are, in fact, generalizations form of the newly generated Jacobi based functions. With respect to these non-classical Lagrange basis 
  functions, two non-classical interpolants are introduced and their error bounds are proved in detail. The pseudo-spectral differentiation (and integration) matrices have been extracted in two different manners. Some numerical experiments are provided to show the efficiency and capability of these newly generated  non-classical Lagrange basis functions.
\end{abstract}

\begin{keywords}
  Erd\'{e}lyi-Kober  fractional derivatives and integrals, M\"untz functions, Jacobi-M\"untz functions, Lagrange M\"untz basis functions, Mapped-Jacobi interpolants, Jacobi-M\"untz interpolants, M\"untz pseudo spectral method,  non-classical interpolants,   orthogonal projections, error bounds, M\"untz quadrature rules, fractional ordinary and partial differential equations.
\end{keywords}

\begin{AMS}
26A33,   
33C45,   
41A55,   
34L10,	 
65M70,
58C40.
\end{AMS}

\section{Introduction}
The history of \textit{fractional calculus } goes back to 17th century. In fact, the fractional calculus deals with the calculus of the integrals and derivatives  of non-integer (real or complex) orders. So, the fractional calculus  can  be considered as a generalization of the classical calculus \cite{MR1658022,MR1219954,MR1926465}.

Up to now, several definitions of fractional integrals and derivatives such as the Riemann-Liouville, the Caputo, the Gr\"unwald-Letnikov, the Weyl, the Hadamard, the Marchaud, the Riesz, the Erd\'{e}lyi-Kober and etc have been introduced (see \cite{MR1890104,MR2432163,MR2218073}).

Due to the \textit{non-local} property of the fractional integrals and derivatives, they have got some good features to formulate various phenomena in science, physics, engineering and etc (see \cite{MR2676137,MR2768178,MR2884383,MR2894576,MR3242674,MR3381791,MR3443856}).   

Unfortunately, thanks to the non-local property of these operators, the analytical solutions of the problems containing these operators are usually either impossible or have some essential difficulties and also they have got very complicated forms. This difficulty leads the researchers to develop the numerical methods to get the solutions of the mentioned problems numerically.     

Generally, all numerical methods can be categorized into: \textit{local}, \textit{global} and \textit{mixed local-global} methods. The local methods, such as  finite difference, finite element and finite volume methods, have the following features \cite{MR2867779,MR2340254,MR2333926}: 
\begin{itemize}
	\item They are simple to use and easy to implement especially for the complicated or nonlinear problems.
	\item They are particularly suitable for complex domains and parallel computations. 
	\item The convergence rate of these methods is usually slow. 
\end{itemize}
The global methods generally include the methods such as: Galerkin, Petrov-Galerkin, Tau, pseudo-spectral and collocation methods which have the following properties:
\begin{itemize}
	\item They are sometimes simple to use and easy to implement especially for the simple problems.
	\item They are not generally suitable for complex domains and parallel computations. 
	\item The convergence rate of these methods is very fast for the problems with smooth solutions. 
\end{itemize}
The global methods divided into: nodal methods, like as pseudo-spectral and collocation methods, and modal methods, such as Galerkin, Petrov-Galerkin and Tau methods. Among these methods, the latter are usually used for the linear and simple problems while the former used for the nonlinear (or complicated) problems.  Both the (classical or usual) nodal and modal methods are based on the classical orthogonal polynomials such as: Jacobi, Chebyshev (first
and second kinds), Legendre, Gegenbauer, Laguerre, Hermite polynomials \cite{MR2867779}.
 These polynomials can be considered as the solutions of a second order ordinary differential equation in the following form \cite{MR0481884,MR2867779}:
\begin{equation}
\frac{d}{dx}\left(\rho(x)y'(x)\right)=\lambda_n \omega(x)y(x),
\end{equation}
under some suitable boundary conditions.

As we are aware, in the classical spectral methods, the solutions of the underlaying problem can be expanded in two ways. In the first way, the solution is approximated in terms of the modal basis:
\begin{equation}\label{Modal}
u\simeq u_N=\sum_{k=0}^{N}a_kp_k(x),
\end{equation}
where $p_k(x)$ is one of the mentioned orthogonal polynomials/functions and in the second way, for the given set of points $\{x_j\}_{j=0}^N$, and functions $w(x)$ and $g(x)$, the solution can be expanded in terms of the nodal basis as follows:
\begin{equation}\label{Nodal}
u\simeq u_N=\sum_{k=0}^{N}u(x_k)h_k(x),\ \ 
\end{equation}
where 
	\begin{equation}\label{Lag}
	h_k(x)=\frac{w(x)}{w(x_k)}\prod_{\substack{j=0\\j\ne k}}^{N}\left(\frac{g(x)-g(x_j)}{g(x_k)-g(x_j)}\right),
	\end{equation}
are the cardinal basis polynomials/functions (sometimes called non-classical Lagrange basis polynomials/functions) which satisfy the well-known Kronecker Delta property $h_k(x_j)=\delta_{kj}$. The theories of the spectral methods clearly show that the convergence rate of the usual (classical) spectral methods is only dependent on the smoothness of the solution. This means that if the underlaying solution is sufficiently smooth on the prescribed domain, then the spectral methods yield \textit{spectral accuracy or exponential accuracy} \cite{MR0481884,MR2867779,MR1874071,MR1776072}. So, it is natural to use them  for the problems with smooth solutions. This fact clearly comes from the fact that when the underlaying solution is sufficiently smooth on its domain then the behavior of the solution is like as a polynomial and thus the use of both the nodal and modal basis polynomials to approximate such function leads to the approximation with exponential accuracy.  

The review of the existing literature indicates that there are four types of the Lagrange basis polynomials/functions on a finite domain  $[a,b]$.  For the readers' convenience, we list these types in \cref{tbl:myLboro}.
\begin{table}[H]
	\caption{Various types of Lagrange basis polynomials/functions.}\label{tbl:myLboro}
	\centering
	\begin{tabular}{ |c|c|c|c|}
		\hline
		Type &$w(x)$ & $g(x)$  & \text{Ref.} \\ \hline
	1&	$1$& $1$ 
		 & \cite{MR2867779}\\ \hline
	2&	$1$& $x^\sigma,\ \sigma>0$ 
		 & \cite{MR3673706}\\\hline
			3&	$(1\pm x)^\mu$& $1$
				 & \cite{MR2787811,MR3150177,MR3342474}\\\hline
				4&	$(1-x)^\mu(1+x)^\nu$& $1$ 
					 & \cite{MR3614684,MR3742689}\\\hline
				\end{tabular}
\end{table}

Now, let us look more closely to these types of the basis functions.  Let $f(x)$ be a sufficiently smooth function on $[a,b]$ and $g(x)$ be a given function. The first, second, third, fourth types of these basis functions can be suitable to approximate the functions $f(x)$,  $f(x^\sigma),\ x>0$ (like as $\sin\left(\sqrt{x}\right)$), $(x-a)^{\alpha}f(x),\ (b-x)^{\beta}f(x)$ (like as $\left(\sqrt{x-a}\right)\sin\left(x\right)$), $(x-a)^{\alpha}(b-x)^{\beta}f(x)$ (like as $\left(\sqrt{b-x}\right)\left(\sqrt{x-a}\right)\ \sin\left(x\right)$), respectively.
Due to the above mentioned issues, these basis polynomials/functions can be investigated from three points of view: 
\begin{itemize}
	\item Polynomials or non-polynomials natures.
	\item Exponential accuracy for smooth or non-smooth functions.
	\item Satisfying the homogeneous initial or boundary conditions.  
\end{itemize}
It is easy to observe from \cref{tbl:myLboro} that only Type 1 have polynomials nature and other types (generally) have non-polynomials nature. Types 1 and 2 satisfy the initial (or boundary) conditions.  Moreover, Type 1 produce spectral method with exponential accuracy (only) for smooth solutions while the other types have exponential accuracy for both smooth and non-smooth solutions (see also \cite{Zayernouri20151545,MR3682767,MR3522285,MR3283821,MR3082823} for some applications of Type 3).  


Now, the main target of this paper is to introduce two new Lagrange basis functions which are, in fact, generalizations of the presented Lagrange basis functions of the Types 1--4. 

For the reader's convenience, we highlight the main contributions of this paper:  
\begin{itemize}
	\item At first, the following two new generalizations of the Lagrange basis polynomials are introduced:
	\begin{eqnarray}\label{NonLag1}
	{}^1L^{(\beta,\mu,\sigma,\eta)}_r(x)&=&\left(\frac{x}{x_r}\right)^{\sigma(\beta-\eta-\mu)}h^{\sigma}_r(x), \label{NonLag1_1}\\ {}^2L^{(\alpha,\sigma,\eta)}_r(x)&=&\left(\frac{x}{x_r}\right)^{\sigma\eta}\left(\frac{b^\sigma-x^\sigma}{b^\sigma-x_r^\sigma}\right)^{\alpha}h^{\sigma}_r(x),\label{NonLag1_2}
	\end{eqnarray}
	where
	\begin{equation}\label{LagMuntz}
	h_{r}^{\sigma}(x)=\prod_{\substack{j=0\\j\ne r}}^{N}\left(\frac{x^\sigma-x_j^\sigma}{x_r^\sigma-x_j^\sigma}\right).
	\end{equation}
	It is easy to see that the newly generated Lagrange basis functions for some values of the parameters $\alpha$, $\beta$, $\mu$, $\eta$ and $\sigma$ reduce to the aforementioned types of the Lagrange basis functions. In fact the Lagrange basis function \eqref{NonLag1_1} and \eqref{NonLag1_2} are obtained from formulation \eqref{Lag} for $w(x)=x^{\sigma(\beta-\eta-\mu)},\ g(x)=x^{\sigma}$  and $w(x)=x^{\sigma\eta}(b^\sigma-x^\sigma)^\alpha,\ g(x)=x^{\sigma}$, respectively.
	\item Two new interpolants with respect to these Lagrange Basis functions are defined (see \cref{NonClacLag12}) and  their error bounds are proved in detail (see \cref{NewIntStability}).
	\item The Erd\'{e}lyi-Kober fractional differentiation matrices with respect to the presented interpolants in two different ways are obtained (see \cref{LSEKFDMs}, \cref{RSEKFDMs}, \cref{StableLSEKFDMs} and \cref{StableRSEKFDMs}).
	\item Some numerical experiments include: 1. Approximations of EK fractional derivatives. 2. Applications to linear and non-linear EK fractional differential equations. 3. Applications to EK fractional partial differential equations. 4. Applications to classical partial differential equations,   are provided to show the efficiency of the newly generated Lagrange basis functions (see \cref{Sec:4}).
\end{itemize}
The outline of this paper is organized as follows. In the next section, some preliminaries include Erd\'{e}lyi-Kober fractional integrals and derivatives, Jacobi-M\"untz functions and Gauss-Jacobi-M\"untz quadrature rules, are given. The main target of this paper is given in \cref{sec:main}. In this section, two new interpolants are introduced and their error bounds are proved. Numerical experiments are provided in \cref{Sec:4}. 
\section{Preliminaries}\label{sec:preliminaries}
In this section, we compile some basic definitions and properties of
fractional differential operators.
\begin{definition}\label{E-K-FI}
	The left and right Erd\'{e}lyi-Kober  fractional integrals ${}_{a}I_{x,\sigma,\eta}^{\mu}$ and ${}_{x}I_{b,\sigma,\eta}^{\mu}$ of order $\mu\in\Bbb{R}^+$  are defined by \cite{MR2218073}:
	\begin{equation}\label{RINTL}
{}_{a}I_{x,\sigma,\eta}^{\mu}[f](x)=\frac{\sigma x^{-\sigma(\eta+\mu)}}{\Gamma(\mu)}\int_a^x(x^\sigma-t^\sigma)^{\mu-1}t^{\sigma(\eta+1)-1}f(t)\,dt,\ x\in(a,b],\ a>0,
	\end{equation}
	and
	\begin{equation}\label{RINTR}
	{}_{x}I_{b,\sigma,\eta}^{\mu}[f](x)=\frac{\sigma x^{\sigma\eta}}{\Gamma(\mu)}\int_x^b(t^\sigma-x^\sigma)^{\mu-1}t^{-\sigma(\eta+\mu-1)-1}f(t)\,dt,\,\ x\in[a,b),\ a>0,
		\end{equation}
respectively. Here $\Gamma$ denotes the Euler gamma function.
\end{definition}
\begin{remark}It is interesting to point out that \cref{E-K-FI} for $\mu=1$ reduces to the following integral formulas respectively:
	\begin{equation*}\label{RINTLSC}
		{}_{a}I_{x,\sigma,\eta}^{1}[f](x)=\sigma x^{-\sigma(\eta+1)}\int_a^xt^{\sigma(\eta+1)-1}f(t)\,dt,\ x\in(a,b],\ a>0,
	\end{equation*}
	\begin{equation*}\label{RINTRSC}
		{}_{x}I_{b,\sigma,\eta}^{1}[f](x)=\sigma x^{\sigma\eta}\int_x^bt^{-\sigma\eta-1}f(t)\,dt,\,\ x\in[a,b),\ a>0.
	\end{equation*}
\end{remark}
\begin{definition}\label{FracEKDef}
	The left and right Erd\'{e}lyi-Kober  fractional derivatives ${}_{a}D_{x,\sigma,\eta}^{\mu}$ and ${}_{x}D_{b,\sigma,\eta}^{\mu}$ of order $n-1<\mu<n$  are defined by \cite{MR2218073}:
	\begin{equation}\label{RDEERL}
	{}_{a}D_{x,\sigma,\eta}^{\mu}[f](x)=x^{-\sigma\eta}\left(\frac{1}{\sigma x^{\sigma-1}}\frac{d}{dx}\right)^nx^{\sigma(\eta+n)}{}_{a}I_{x,\sigma,\eta+\mu}^{n-\mu}[f](x),\ x\in(a,b],
	\end{equation}
	and
	\begin{equation}\label{RDERR}
	{}_{x}D_{b,\sigma,\eta}^{\mu}[f](x)=x^{\sigma(\eta+\mu)}\left(\frac{-1}{\sigma x^{\sigma-1}}\frac{d}{dx}\right)^nx^{-\sigma(\mu+\eta-n)}{}_{x}I_{b,\sigma,\eta+\mu-n}^{n-\mu}[f](x),\ x\in[a,b),
	\end{equation}
	respectively.
\end{definition}
\begin{remark}\label{Spec-E-KFD}
	It is worthwhile to point out that for $\mu=1$ and $\mu=2$,  \cref{FracEKDef} reduces to:
	\begin{eqnarray*}
		&&{}_{a}D_{x,\sigma,\eta}^{1}[f](x)=x^{-\sigma\eta}\left(\frac{1}{\sigma x^{\sigma-1}}\frac{d}{dx}\right)x^{\sigma(\eta+1)}f(x),\\
		&&{}_{x}D_{b,\sigma,\eta}^{1}[f](x)=x^{\sigma(\eta+1)}\left(\frac{-1}{\sigma x^{\sigma-1}}\frac{d}{dx}\right)x^{-\sigma\eta}f(x),
	\end{eqnarray*}
and 
	\begin{eqnarray*}
		&&{}_{a}D_{x,\sigma,\eta}^{2}[f](x)=x^{-\sigma\eta}\left(\frac{1}{\sigma x^{\sigma-1}}\frac{d}{dx}\right)^2x^{\sigma(\eta+2)}f(x),\\
		&&{}_{x}D_{b,\sigma,\eta}^{2}[f](x)=x^{\sigma(\eta+2)}\left(\frac{-1}{\sigma x^{\sigma-1}}\frac{d}{dx}\right)^2x^{-\sigma\eta}f(x),
	\end{eqnarray*}	
	respectively.
\end{remark}
\subsection{Jacobi-M\"untz functions}
For the readers' convenience,  in this section, we briefly review some properties of the M\"untz functions. For the extra information and properties of them we refer the readers to \cite{101259}.  
\begin{definition}\label{JacMunFuns}
	Let $\alpha,\beta>-1$. The Jacobi-M\"untz functions of the first and second kinds (JMFs-1 and JMFs-2) are denoted by ${}^1\mathcal{J}^{(\alpha,\beta,\mu,\sigma,\eta)}_n(x)$ and ${}^2\mathcal{J}^{(\alpha,\beta,\sigma,\eta)}_n(x)$, respectively, and are defined by:
	\begin{eqnarray}
	\ \ \ \ \ \ \ \ 	{}^1\mathcal{J}^{(\alpha,\beta,\mu,\sigma,\eta)}_n(x)&&=x^{\sigma(\beta-\eta-\mu)}P_n^{(\alpha,\beta)}\left(2\left(\frac{x}{b}\right)^\sigma-1\right),\ \ x\in[0,b],
	\\
	{}^2\mathcal{J}^{(\alpha,\beta,\sigma,\eta)}_n(x)&&=x^{\sigma\eta}\left(b^\sigma-x^\sigma\right)^\alpha P_n^{(\alpha,\beta)}\left(2\left(\frac{x}{b}\right)^\sigma-1\right),\ x\in[0,b], 
	\end{eqnarray}
	where $\sigma>0$.
\end{definition}
\begin{remark}
	It should be noted that the JMFs-1 and JMFs-2 are in fact two new subclasses of M\"untz functions because we have:  
	\begin{equation*}
	{}^1\mathcal{J}^{(\alpha,\beta,\mu,\sigma,\eta)}_n(x)\in\text{span}\left\{x^{\lambda_k},\ \lambda_k=a+kb,\ k=0,1,\ldots,n\right\},\  a=\sigma(\beta-\eta-\mu),\ b=\sigma, 
	\end{equation*}
	and moreover: 
	\begin{equation*}
	{}^2\mathcal{J}^{(\alpha,\beta,\sigma,\eta)}_n(x)\in\text{span}\left\{(b^\sigma-x^\sigma)^\alpha x^{\lambda_k},\ \lambda_k=\sigma\eta+\sigma k,\ k=0,1,\ldots,n\right\}.
	\end{equation*}
\end{remark}
One of the most important properties of the JMFs-1 and JMFs-2 is the orthogonality.  In the following, we state the orthogonality property of the JMFs-1 and JMFs-2.
\begin{remark}
The orthogonality of JMFs-1 and JMFs-2 are given: 
\begin{equation}\label{Prop4}
\int_{0}^{b}{}^1\mathcal{J}^{(\alpha,\beta,\mu,\sigma,\eta)}_n(x)\ {}^1\mathcal{J}^{(\alpha,\beta,\mu,\sigma,\eta)}_m(x)x^{\sigma-1}w_1^{(\alpha,\beta,\mu,\sigma,\eta)}(x)\,dx={}^*\gamma_n^{(\alpha,\beta)}\delta_{nm},
\end{equation}
and
\begin{equation}\label{Prop42}
\int_{0}^{b}{}^2\mathcal{J}^{(\alpha,\beta,\sigma,\eta)}_n(x)\ {}^2\mathcal{J}^{(\alpha,\beta,\sigma,\eta)}_m(x)x^{\sigma-1}w_2^{(\alpha,\beta,\sigma,\eta)}(x)\,dx={}^*\gamma_n^{(\alpha,\beta)}\delta_{nm},
\end{equation}
where
\begin{eqnarray}
&&  w_1^{(\alpha,\beta,\mu,\sigma,\eta)}(x)=x^{\sigma(2(\eta+\mu)-\beta)}\left(b^\sigma-x^\sigma\right)^{\alpha},\label{Wei_1}\\
&& w_2^{(\alpha,\beta,\sigma,\eta)}(x)=x^{\sigma(\beta-2\eta)}\left(b^\sigma-x^\sigma\right)^{-\alpha},\label{Wei_2}
\end{eqnarray}
and  $\displaystyle {}^*\gamma_n^{(\alpha,\beta)}=\frac{1}{\sigma}\left(\frac{b^\sigma}{2}\right)^{\alpha+\beta+1}\gamma_n^{(\alpha,\beta)}$, where $\gamma_n^{(\alpha,\beta)}$ is defined as:
\begin{equation}\label{Orthog_Cons}
\gamma_n^{(\alpha,\beta)}=\frac{2^{\alpha+\beta+1}\Gamma(n+\alpha+1)\Gamma(n+\beta+1)}{(2n+\alpha+\beta+1)n!\Gamma(n+\alpha+\beta+1)}.
\end{equation} 
\end{remark}
In the next theorem we state an important property of the JMFs-1 and JMFs-2 from the approximation theory's view point.   In fact, in the next theorem the completeness of the JMFs-1 and JMFs-2 in some suitable spaces is introduced.
\begin{theorem}\label{Complete}
	Let $\alpha,\beta>-1$. The  sets of JMFs $\left\{{}^1\mathcal{J}^{(\alpha,\beta,\mu,\sigma,\eta)}_n(x)\right\}_{n=0}^{\infty}$ and $\left\{{}^2\mathcal{J}^{(\alpha,\beta,\sigma,\eta)}_n(x)\right\}_{n=0}^{\infty}$ construct two complete sets in spaces ${\bf L}^2_{x^{\sigma-1}w_1^{(\alpha,\beta,\mu,\sigma,\eta)}}(\Lambda)$ and ${\bf L}^2_{x^{\sigma-1}w_2^{(\alpha,\beta,\sigma,\eta)}}(\Lambda)$, respectively.
\end{theorem}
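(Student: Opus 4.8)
The plan is to reduce the statement to the classical completeness of the Jacobi polynomials $\{P_n^{(\alpha,\beta)}\}_{n=0}^{\infty}$ in the weighted space ${\bf L}^2_{\omega^{(\alpha,\beta)}}(-1,1)$, where $\omega^{(\alpha,\beta)}(t)=(1-t)^\alpha(1+t)^\beta$. This completeness holds for all $\alpha,\beta>-1$ and is a standard consequence of the Weierstrass approximation theorem together with the density of $C[-1,1]$ in that weighted $L^2$ space (the weight being integrable precisely because $\alpha,\beta>-1$). The governing idea is that each family of JMFs is exactly the image of the Jacobi polynomials under an explicit change-of-variables operator that is unitary up to a positive constant, and a surjective isometry carries a total system to a total system.

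First I would introduce the substitution $t=2(x/b)^\sigma-1$, which is a smooth increasing bijection of $(0,b)$ onto $(-1,1)$ since $\sigma>0$, with inverse $x=b\left(\tfrac{t+1}{2}\right)^{1/\sigma}$. For the first kind I would define the operator
\[
(\Phi_1\phi)(x)=x^{\sigma(\beta-\eta-\mu)}\,\phi\!\left(2(x/b)^\sigma-1\right),
\]
so that by construction $\Phi_1 P_n^{(\alpha,\beta)}={}^1\mathcal{J}^{(\alpha,\beta,\mu,\sigma,\eta)}_n$. The crux is to verify that $\Phi_1$ is an isometry up to a fixed positive constant, i.e. that for all $\phi,\psi$,
\[
\int_0^b (\Phi_1\phi)(x)(\Phi_1\psi)(x)\,x^{\sigma-1}w_1^{(\alpha,\beta,\mu,\sigma,\eta)}(x)\,dx = c\int_{-1}^1 \phi(t)\psi(t)\,\omega^{(\alpha,\beta)}(t)\,dt
\]
for some $c>0$. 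This is a direct change-of-variables computation: substituting the definition of $\Phi_1$ and of $w_1^{(\alpha,\beta,\mu,\sigma,\eta)}$ from \eqref{Wei_1}, all powers of $x$ collapse, and after replacing $dx$ together with $b^\sigma-x^\sigma=b^\sigma\left(\tfrac{1-t}{2}\right)$ and $x^\sigma=b^\sigma\left(\tfrac{1+t}{2}\right)$, the integrand reduces exactly to $(1-t)^\alpha(1+t)^\beta$ times constants. The orthogonality relation \eqref{Prop4} is precisely the special case $\phi=P_n^{(\alpha,\beta)}$, $\psi=P_m^{(\alpha,\beta)}$ of this identity, and I would use it as a consistency check on the exponent bookkeeping.

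Finally, since the substitution is a bijection of $(0,b)$ onto $(-1,1)$ and multiplication by the a.e.-nonzero factor $x^{\sigma(\beta-\eta-\mu)}$ is invertible, $\Phi_1$ is onto; hence $\Phi_1$ is a surjective isometry (a Hilbert-space isomorphism up to scaling) between ${\bf L}^2_{\omega^{(\alpha,\beta)}}(-1,1)$ and ${\bf L}^2_{x^{\sigma-1}w_1^{(\alpha,\beta,\mu,\sigma,\eta)}}(\Lambda)$. Such a map is a homeomorphism and sends the dense linear span of $\{P_n^{(\alpha,\beta)}\}$ onto the dense linear span of $\{{}^1\mathcal{J}^{(\alpha,\beta,\mu,\sigma,\eta)}_n\}$, so completeness transfers to the first family. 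I would then repeat the argument verbatim for the second kind, replacing $\Phi_1$ by $(\Phi_2\phi)(x)=x^{\sigma\eta}(b^\sigma-x^\sigma)^\alpha\,\phi\!\left(2(x/b)^\sigma-1\right)$ and $w_1^{(\alpha,\beta,\mu,\sigma,\eta)}$ by $w_2^{(\alpha,\beta,\sigma,\eta)}$ from \eqref{Wei_2}, with \eqref{Prop42} serving as the analogous check. I expect the only genuinely delicate point to be the exponent accounting that makes $\Phi_1$ and $\Phi_2$ isometric; everything else is formal once that identity is established.
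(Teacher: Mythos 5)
Your proposal is correct, but it cannot be compared step-by-step with the paper's own argument for a simple reason: the paper does not actually prove \cref{Complete}; its ``proof'' is a pointer to the authors' earlier work \cite{101259}. Your argument is therefore a self-contained replacement, and it is the natural one: transfer the classical completeness of $\left\{P_n^{(\alpha,\beta)}\right\}$ in ${\bf L}^2_{w^{(\alpha,\beta)}}(-1,1)$, $w^{(\alpha,\beta)}(t)=(1-t)^\alpha(1+t)^\beta$, through a weighted change-of-variables operator that is unitary up to a constant. I checked the exponent bookkeeping you flagged as the delicate point. For the first kind, the prefactors $x^{\sigma(\beta-\eta-\mu)}$ from $\Phi_1\phi$ and $\Phi_1\psi$ combine with the measure $x^{\sigma-1}w_1^{(\alpha,\beta,\mu,\sigma,\eta)}(x)\,dx$ from \eqref{Wei_1} to give total $x$-power $2\sigma(\beta-\eta-\mu)+\sigma(2(\eta+\mu)-\beta)+\sigma-1=\sigma\beta+\sigma-1$; substituting $t=2(x/b)^\sigma-1$, $dx=\tfrac{b^\sigma}{2\sigma}x^{1-\sigma}\,dt$, $x^{\sigma\beta}=\left(\tfrac{b^\sigma}{2}\right)^{\beta}(1+t)^{\beta}$ and $(b^\sigma-x^\sigma)^{\alpha}=\left(\tfrac{b^\sigma}{2}\right)^{\alpha}(1-t)^{\alpha}$ yields exactly the Jacobi inner product multiplied by $\tfrac{1}{\sigma}\left(\tfrac{b^\sigma}{2}\right)^{\alpha+\beta+1}$, which is precisely the ratio ${}^*\gamma_n^{(\alpha,\beta)}/\gamma_n^{(\alpha,\beta)}$ in \eqref{Prop4}, so your consistency check passes; the second kind collapses to the identical integrand, consistent with \eqref{Prop42}. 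The remaining ingredients are sound: density of polynomials in the weighted space on $(-1,1)$ uses integrability of the weight, i.e.\ exactly $\alpha,\beta>-1$, which you note; and surjectivity of $\Phi_1$, $\Phi_2$ holds because the inverse substitution composed with division by the a.e.-nonzero prefactor produces a preimage whose finiteness of norm follows from the same isometry identity read backwards, so dense spans map onto dense spans. What your route buys is that the theorem becomes provable inside this paper with half a page of computation; what the citation buys the authors is brevity, and in substance the argument in \cite{101259} is presumably this same unitary-transfer argument.
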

\begin{proof}
	See \cite{101259} for the proof of this theorem.
\end{proof}
In the following some important properties of the JMFs-1 and JMFs-2 is introduced.
\begin{remark}\label{FracDer}
	Let $0<\mu\leq1$. Then we have:
	\begin{eqnarray*}
		&&{}_{0}D_{x,\sigma,\eta}^{\mu}\Big[x^{\sigma(\beta-\eta-\mu)}P_k^{(\alpha,\beta)}\left(2\left(\frac{x}{b}\right)^\sigma-1\right)\Big]=\\
		&&\hspace{5cm}\frac{\Gamma(k+\beta+1)}{\Gamma(k+\beta-\mu+1)}x^{\sigma(\beta-\eta-\mu)}P_k^{(\alpha+\mu,\beta-\mu)}\left(2\left(\frac{x}{b}\right)^\sigma-1\right),\\
		&&{}_{x}D_{b,\sigma,\eta}^{\mu}\Big[x^{\sigma\eta}\left(b^\sigma-x^\sigma\right)^\alpha P_k^{(\alpha,\beta)}\left(2\left(\frac{x}{b}\right)^\sigma-1\right)\Big]=\\ &&\hspace{3.3cm}\frac{\Gamma(k+\alpha+1)}{\Gamma(k+\alpha-\mu+1)}x^{\sigma(\eta+\mu)}\left(b^\sigma-x^\sigma\right)^{\alpha-\mu}P_k^{(\alpha-\mu,\beta+\mu)}\left(2\left(\frac{x}{b}\right)^\sigma-1\right).
	\end{eqnarray*}
\end{remark}
\begin{proof}
	The proof of this theorem is presented in \cite{101259}.
\end{proof}
\begin{remark}\label{EKFD}
	By noting \cref{JacMunFuns}, we can rewrite \cref{FracDer} for $0<\mu\leq1$ as follows:
	\begin{eqnarray*}
		&&{}_{0}D_{x,\sigma,\eta}^{\mu}\Big[{}^1\mathcal{J}^{(\alpha,\beta,\mu,\sigma,\eta)}_k(x)\Big]=\frac{\Gamma(k+\beta+1)}{\Gamma(k+\beta-\mu+1)}{}^1\mathcal{J}^{(\alpha+\mu,\beta-\mu,\mu,\sigma,\eta-\mu)}_k(x),\  \beta-\mu>-1,\\
		&&{}_{x}D_{b,\sigma,\eta}^{\mu}\Big[{}^2\mathcal{J}^{(\alpha,\beta,\sigma,\eta)}_k(x)\Big]=\frac{\Gamma(k+\alpha+1)}{\Gamma(k+\alpha-\mu+1)}{}^2\mathcal{J}^{(\alpha-\mu,\beta+\mu,\sigma,\eta+\mu)}_k(x),\  \alpha-\mu>-1.
	\end{eqnarray*}
\end{remark}
\begin{remark}\label{ExtendedEKFD}
	It should be noted that \cref{EKFD} remains true for the case $ \mu>1$.
\end{remark}
The last remark is given as follows.
\begin{remark}
	Two special cases of \cref{EKFD} is as follows:
	\begin{eqnarray}
		&&\frac{d}{dx}\left[x^{\sigma\beta}P_n^{(\alpha,\beta)}\left(2\left(\frac{x}{b}\right)^\sigma-1\right)\right]=\nonumber\\
		&&\hspace{3cm}\frac{\sigma\Gamma(n+\beta+1)}{\Gamma(n+\beta)}x^{\sigma\beta-1}P_n^{(\alpha+1,\beta-1)}\left(2\left(\frac{x}{b}\right)^\sigma-1\right),\label{Special_1}\\
		&&\frac{d}{dx}\left[(b^\sigma-x^{\sigma})^\alpha P_n^{(\alpha,\beta)}\left(2\left(\frac{x}{b}\right)^\sigma-1\right)\right]=\nonumber\\
		&&\hspace{1.5cm}\frac{-\sigma\Gamma(n+\alpha+1)}{\Gamma(n+\alpha)}x^{\sigma-1}(b^\sigma-x^{\sigma})^{\alpha-1}P_n^{(\alpha-1,\beta+1)}\left(2\left(\frac{x}{b}\right)^\sigma-1\right).\label{Special_2}
	\end{eqnarray}
	\begin{proof}
			The proof is presented in \cite{101259}.
		\end{proof}
	\end{remark}
\subsection{Gauss-Jacobi-M\"untz quadrature rules}
Corresponding to the JMFs-1 and JMFs-2, two new Gaussian quadrature rules are introduced in \cite{101259}. In the following, we restate them by noting:
\begin{eqnarray}
&&\Bbb{P}_{N}^{(\sigma)}:=\text{span}\left\{x^{k\sigma}:\ k=0,1,\ldots,N\right\},\label{MuntzSp1}\\
&&\Bbb{P}_{N}^{(\beta,\mu,\sigma,\eta)}:=\text{span}\left\{x^{2\sigma(\beta-\mu-\eta)+k\sigma}:\ k=0,1,\ldots,N\right\},\label{MuntzSp2}\\
&&\Bbb{P}_{N}^{(\alpha,\sigma,\eta)}:=\text{span}\left\{(b^\sigma-x^\sigma)^{2\alpha}x^{2\sigma\eta+k\sigma}:\ k=0,1,\ldots,N\right\}.\label{MuntzSp3}
\end{eqnarray}
In the next theorem two new quadrature rules based on the JMFs-1 and JMFs-1 are presented.
\begin{theorem}\label{MintzJacQuad}
	Let $\sigma>0$ and $\alpha,\beta>-1$. Let  $x_j^{(\alpha,\beta)}$ and $w_j^{(\alpha,\beta)}$ for $j=0,1,2\ldots,n$ be the Gauss-Jacobi nodes and weights with parameter $(\alpha,\beta)$ on $[-1,1]$, respectively. Then we have the following quadrature rule:
	\begin{equation}\label{MuntsQuad}
	\int_{0}^{b}f(x)w^{(\alpha,\beta,\sigma)}(x)\,dx=\sum_{j=0}^{n}w_j^{(\alpha,\beta,\sigma)}f\left(x_j^{(\alpha,\beta,\sigma)}\right)+E_n[f],
	\end{equation}
	where $w^{(\alpha,\beta,\sigma)}(x)=x^{\sigma(\beta+1)-1}(b^{\sigma}-x^{\sigma})^{\alpha}$ and $E_n[f]$ stands for the quadrature error. Then the above quadrature formula is exact (i.e., $E_n[f]=0$) for any $f(x)\in\Bbb{P}_{2n+1}^{(\sigma)}$, where
	\begin{equation}\label{MuntsQuadNodWei}
	w_j^{(\alpha,\beta,\sigma)}=\frac{1}{\sigma}\left(\frac{b^\sigma}{2}\right)^{\alpha+\beta+1}w_j^{(\alpha,\beta)},\ \ \ \ \ \ \  x_j^{(\alpha,\beta,\sigma)}=b\left(\frac{1+x_j^{(\alpha,\beta)}}{2}\right)^{\frac{1}{\sigma}}. 
	\end{equation}
	Also, the Gauss-Jacobi-M\"untz quadrature rules of the first and second types (which are denoted respectively by GJMQR-1 and GJMQR-2) are as follows:
	\begin{equation}\label{JacMunts1Quad}
	\int_{0}^{b}f(x)x^{\sigma(2(\eta+\mu)-\beta+1)-1}\left(b^\sigma-x^\sigma\right)^{\alpha}\,dx=\sum_{j=0}^{n}w_j^{(\alpha,\beta,\mu,\sigma,\eta)}f\left(x_j^{(\alpha,\beta,\sigma)}\right)+{}^1E_n[f],
	\end{equation}
	and
	\begin{equation}\label{JacMunts2Quad}
	\int_{0}^{b}f(x)x^{\sigma(\beta-2\eta+1)-1}\left(b^\sigma-x^\sigma\right)^{-\alpha}\,dx=\sum_{j=0}^{n}w_j^{(\alpha,\beta,\sigma,\eta)}f\left(x_j^{(\alpha,\beta,\sigma)}\right)+{}^2E_n[f].
	\end{equation}
	The above quadrature formulas \eqref{JacMunts1Quad} and \eqref{JacMunts2Quad} are exact (i.e., ${}^iE_n[f]=0,\ i=1,2$) for any $f(x)\in\Bbb{P}_{2n+1}^{(\beta,\mu,\sigma,\eta)}$ and $f(x)\in\Bbb{P}_{2n+1}^{(\alpha,\sigma,\eta)}$, respectively, where
	\begin{eqnarray}
	&&w_j^{(\alpha,\beta,\mu,\sigma,\eta)}={w_j^{(\alpha,\beta,\sigma)}}\ {\left(x_j^{(\alpha,\beta,\sigma)}\right)^{2\sigma(\eta+\mu-\beta)}},\ \ \ \ \ \ \label{JacMunts1-2QuadNodWei1} \\
	&& w_j^{(\alpha,\beta,\sigma,\eta)}= w_j^{(\alpha,\beta,\sigma)}}\  {\left(b^\sigma-\left(x_j^{(\alpha,\beta,\sigma)}\right)^{\sigma}\right)^{-2\alpha}}{\left(x_j^{(\alpha,\beta,\sigma)}\right)^{-2\sigma\eta}.\label{JacMunts1-2QuadNodWei2}
	\end{eqnarray}
\end{theorem}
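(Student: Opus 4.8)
The plan is to derive all three quadrature rules from the classical Gauss-Jacobi rule on $[-1,1]$ by means of the single substitution $t = 2(x/b)^\sigma - 1$, equivalently $x = b\left(\frac{1+t}{2}\right)^{1/\sigma}$, which maps $[0,b]$ bijectively onto $[-1,1]$. First I would establish the base rule \eqref{MuntsQuad}. Differentiating the substitution gives $dt = 2\sigma b^{-\sigma}x^{\sigma-1}\,dx$, while the Jacobi factors transform as $1-t = 2b^{-\sigma}(b^\sigma - x^\sigma)$ and $1+t = 2b^{-\sigma}x^\sigma$. Collecting powers, one finds $(1-t)^\alpha(1+t)^\beta\,dt = 2^{\alpha+\beta+1}\sigma b^{-\sigma(\alpha+\beta+1)}\,w^{(\alpha,\beta,\sigma)}(x)\,dx$, so that for $g(t) := f\!\left(b\left(\frac{1+t}{2}\right)^{1/\sigma}\right)$,
\[
\int_0^b f(x)\,w^{(\alpha,\beta,\sigma)}(x)\,dx = \frac{1}{\sigma}\left(\frac{b^\sigma}{2}\right)^{\alpha+\beta+1}\int_{-1}^1 g(t)(1-t)^\alpha(1+t)^\beta\,dt.
\]
Applying classical Gauss-Jacobi quadrature on the right and using $g(x_j^{(\alpha,\beta)}) = f(x_j^{(\alpha,\beta,\sigma)})$ yields the node and weight formulas \eqref{MuntsQuadNodWei}. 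For exactness, the crucial point is that the substitution carries each M\"untz monomial $x^{k\sigma}$ to $b^{k\sigma}\left(\frac{1+t}{2}\right)^k$, a polynomial of degree exactly $k$ in $t$; hence $\Bbb{P}_{2n+1}^{(\sigma)}$ is mapped onto the polynomials of degree at most $2n+1$, precisely the class integrated exactly by Gauss-Jacobi quadrature, so $E_n[f]=0$.

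For the two Jacobi-M\"untz rules I would avoid repeating the change of variables and instead absorb the discrepancy between their weights and $w^{(\alpha,\beta,\sigma)}$ into the integrand. Comparing exponents shows the GJMQR-1 weight equals $x^{2\sigma(\eta+\mu-\beta)}\,w^{(\alpha,\beta,\sigma)}(x)$ and the GJMQR-2 weight equals $x^{-2\sigma\eta}(b^\sigma - x^\sigma)^{-2\alpha}\,w^{(\alpha,\beta,\sigma)}(x)$. Setting $\tilde f(x) = x^{2\sigma(\eta+\mu-\beta)}f(x)$ and $\hat f(x) = x^{-2\sigma\eta}(b^\sigma - x^\sigma)^{-2\alpha}f(x)$ and applying the base rule \eqref{MuntsQuad} to $\tilde f$ and $\hat f$ immediately produces the weight formulas \eqref{JacMunts1-2QuadNodWei1} and \eqref{JacMunts1-2QuadNodWei2}, since the extra factor evaluated at $x_j^{(\alpha,\beta,\sigma)}$ is folded into $w_j^{(\alpha,\beta,\sigma)}$. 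Exactness follows because the generating monomials of $\Bbb{P}_{2n+1}^{(\beta,\mu,\sigma,\eta)}$ and $\Bbb{P}_{2n+1}^{(\alpha,\sigma,\eta)}$ are tailored so that these prefactors cancel the $x^{2\sigma(\beta-\mu-\eta)}$ and $(b^\sigma - x^\sigma)^{2\alpha}x^{2\sigma\eta}$ terms, sending $\tilde f$ and $\hat f$ back into $\Bbb{P}_{2n+1}^{(\sigma)}$, where the base rule is exact.

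The step requiring the most care, and the likeliest source of an off-by-$\sigma$ slip, is the exponent bookkeeping in the base substitution: one must track the $x$-powers arising jointly from $(1+t)^\beta$, from $1-t$, and from the Jacobian $dt$, and check they combine to the exponent $\sigma(\beta+1)-1$ appearing in $w^{(\alpha,\beta,\sigma)}$. A companion point worth verifying is that $x^{k\sigma}\mapsto b^{k\sigma}\left(\frac{1+t}{2}\right)^k$ is a bijection of $\Bbb{P}_{2n+1}^{(\sigma)}$ onto the polynomials of degree at most $2n+1$ rather than merely an injection into a larger space, so that the transformed integrand is genuinely a polynomial of degree at most $2n+1$; this holds because $\left(\frac{1+t}{2}\right)^k$ has degree exactly $k$, making the images a triangular and hence invertible change of basis.
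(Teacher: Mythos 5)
Your proposal is correct and complete: the change of variables $t=2(x/b)^\sigma-1$ is carried out with the right Jacobian and exponent bookkeeping (the factors combine exactly to $x^{\sigma(\beta+1)-1}(b^\sigma-x^\sigma)^\alpha$), the reduction of GJMQR-1 and GJMQR-2 to the base rule via the prefactors $x^{2\sigma(\eta+\mu-\beta)}$ and $x^{-2\sigma\eta}(b^\sigma-x^\sigma)^{-2\alpha}$ matches the stated weights \eqref{JacMunts1-2QuadNodWei1}--\eqref{JacMunts1-2QuadNodWei2}, and the exactness classes are handled correctly since the substitution maps $\Bbb{P}_{2n+1}^{(\sigma)}$ bijectively onto $\Bbb{P}_{2n+1}$. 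Note that the paper itself gives no proof of this theorem, deferring entirely to the authors' earlier work \cite{101259}; your argument is the natural mapped-Gauss-Jacobi derivation one would expect that reference to contain, so there is no substantive divergence to report.
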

\begin{proof}
	See \cite{101259} for the proof of this theorem. 
\end{proof}
	
\section{Main results}\label{sec:main}
This section devotes to the main results of this paper. To do so, we introduce two new non-classical Lagrange basis functions corresponding to the newly introduced basis functions JMFs-1 and JMFs-2 as follows:
\begin{definition} Let $\{x_r\}_{r=0}^N$ be an arbitrary set of nodes on $[0,b]$, then the Lagrange-M\"untz basis functions of the first- and second-kind which  denoted by LMFs-1 and LMFs-2 are defined as:
\begin{equation}\label{NonLag}
{}^1L^{(\beta,\mu,\sigma,\eta)}_r(x)=\left(\frac{x}{x_r}\right)^{\sigma(\beta-\eta-\mu)}h^{\sigma}_r(x), \ {}^2L^{(\alpha,\sigma,\eta)}_r(x)=\left(\frac{x}{x_r}\right)^{\sigma\eta}\left(\frac{b^\sigma-x^\sigma}{b^\sigma-x_r^\sigma}\right)^{\alpha}h^{\sigma}_r(x),
\end{equation}
where
\begin{equation}\label{nonclasLagr}
h^{\sigma}_r(x)=\prod_{\substack{j=0\\j\ne r}}^{N}\left(\frac{x^\sigma-x_j^\sigma}{x_r^\sigma-x_j^\sigma}\right), \ r=0,1,\ldots,N,\  \sigma>0.
\end{equation}
	\end{definition}
	\begin{remark}
		It is easy to see that ${}^1L^{(\beta,\mu,\sigma,\eta)}_r(x)$ and ${}^2L^{(\alpha,\sigma,\eta)}_r(x)$ satisfy  in the Kronecker delta property, that is: ${}^1L^{(\beta,\mu,\sigma,\eta)}_r(x_k)=\delta_{rk}$  and also ${}^2L^{(\alpha,\sigma,\eta)}_r(x_k)=\delta_{rk}$.
	\end{remark}
	\begin{remark}
		Another important issue which is worthwhile to emphasize here is that $h^{\sigma}_r(x), \ r=0,1,\cdots,N$ defined in \eqref{nonclasLagr} preserve the polynomial nature only for $\sigma=1$. This means that for $\sigma\ne1$  the functions \eqref{nonclasLagr} not only doesn't behave like polynomials but also they are provide a class of non-smooth functions.    
	\end{remark}
	With respect to the aforementioned (LMFs-1) and (LMFs-2), we will define two new non-classical Lagrange interpolants. To do so, we need to introduce the following notations. 	Let $\omega(x)$ be a certain weight function, then:
	\[
	L^2_{\omega}(\Lambda)=\{v\ \big|\ \text{$v$ is measurable on $\Lambda$ and $\|v\|_{\omega}<\infty$}\},\ \Lambda=\{x\ \big|\ {}0<x<b\},
	\]
	together with the following inner product and norm
	\[
	(u,v)_{\omega}=\int_0^b u(x)v(x)\omega(x)\,dx,\ \ \|v\|_{\omega}^2=(v,v)_{\omega}.
	\]
	Let $m$ be a nonnegative integer number. We also define the (standard) weighted Sobolev space:
	\[
	H^m_{\omega}(\Lambda)=\{v\ \big|\ \partial_x^k v\in L^2_{\omega}(\Lambda),\ 0\leq k\leq m\}, \ \partial_x^k v(x)=\frac{d^k}{dx^k}v(x),
	\]
	equipped with the following inner product, semi-norm and norm 
	\[
	(u,v)_{m,\omega}=\sum_{k=0}^{m}(\partial_x^k u, \partial_x^k v)_{\omega,},\ \big|v\big|_{m,\omega}=\|\partial_x^m v\|_{\omega},\ \| v\|_{m,\omega}^2=(v,v)_{m,\omega},
	\]
	respectively. We also note that, for simplicity, when $\omega=1$, the subscript $\omega$ in the previous notations is  dropped.  We also point out that $\mathcal{C}(\Lambda)$ stands for the space of all continuous functions on the domain $\Lambda$.
	
	Moreover, we need to introduce the discrete inner products and norms with respect to the new infinite Hilbert spaces ${\bf L}^2_{w^{(\alpha,\beta,\sigma)}}(\Lambda)$, ${\bf L}^2_{x^{\sigma-1}w_1^{(\alpha,\beta,\mu,\sigma,\eta)}}(\Lambda)$ and ${\bf L}^2_{x^{\sigma-1}w_2^{(\alpha,\beta,\sigma,\eta)}}(\Lambda)$ as follows: 
		\begin{equation}
	(u,v)_{w_i^\sigma,N}=\sum_{j=0}^{N}	w_j^{(\sigma,i)}u(x_{j}^{(\alpha,\beta,\sigma)})v(x_{j}^{(\alpha,\beta,\sigma)}),\ \|v\|_{w_i^\sigma,N}=(u,v)_{w_i^\sigma,N}^{\frac12},\ i=0,1,2,\nonumber
	\end{equation} 
		where, for simplicity of notations,  in the rest of this paper, we will use $w^\sigma_0(x)$ for $w^{(\alpha,\beta,\sigma)}(x)$, $w_1^\sigma(x)$ for  $x^{\sigma-1}w_1^{(\alpha,\beta,\mu,\sigma,\eta)}(x)$ and $w_2^\sigma(x)$ for $x^{\sigma-1}w_2^{(\alpha,\beta,\sigma,\eta)}(x)$, respectively,
		where $w_j^{(\alpha,\beta,\sigma)}$ and $x_j^{(\alpha,\beta,\sigma)}$ are defined in \eqref{MuntsQuadNodWei}. Moreover, we will recall that  $w_j^{(\sigma,0)}:=w_j^{(\alpha,\beta,\sigma)}$, $w_j^{(\sigma,1)}:=w_j^{(\alpha,\beta,\mu,\sigma,\eta)}$ and $w_j^{(\sigma,2)}:=w_j^{(\alpha,\beta,\sigma,\eta)}$ 
 are the weights of the Gauss-Jacobi-M\"untz quadrature rules  of the first- and second- kind (GJMQR-1 and GJMQR-2) which are defined in \eqref{JacMunts1-2QuadNodWei1} and \eqref{JacMunts1-2QuadNodWei2}, respectively.
	By the exactness of the quadrature rules \eqref{MuntsQuad}, \eqref{JacMunts1Quad} and \eqref{JacMunts2Quad}, we easily conclude that:
	\begin{equation}\label{Inprod}
	(\phi,\psi)_{w^\sigma,N}=(\phi,\psi)_{w^\sigma},\ \forall \ \phi.\psi\in\Bbb{P}_{2N+1}^{(\sigma)},
	\end{equation}
	and 
	\begin{equation}\label{Inprod12}
	(\phi,\psi)_{w_1^\sigma,N}=(\phi,\psi)_{w_1^\sigma},\ \forall \ \phi.\psi\in\Bbb{P}_{2N+1}^{(\beta,\mu,\sigma,\eta)},\  (\phi,\psi)_{w_2^\sigma,N}=(\phi,\psi)_{w_2^\sigma},\ \forall \ \phi.\psi\in\Bbb{P}_{2N+1}^{(\alpha,\sigma,\eta)}.
	\end{equation}
	Now, we define the following mapped-Jacobi interpolants.
	\begin{definition}\label{NonClacLag}
Let  $x_r^{(\alpha,\beta,\sigma)},\ r=0,1,\cdots,N$ be the nodes defined in \eqref{MuntsQuadNodWei}. The mapped-Jacobi interpolants (MJIs) denoted by $\mathcal{I}_{w^\sigma,N}: \mathcal{C}(\Lambda)\longrightarrow \Bbb{P}_{N}^{(\sigma)}$ is defined as:
\[
\mathcal{I}_{w^\sigma,N}\ {}v(x_r^{(\alpha,\beta,\sigma)})=v(x_r^{(\alpha,\beta,\sigma)}),\ v\in\mathcal{C}(\Lambda),\ r=0,1,..N.
\]
It is easy to verify that  for $v\in\Bbb{P}_{N}^{(\sigma)}$, we have:
\[
(\mathcal{I}_{w^\sigma,N}v-v,\psi)_{w^\sigma,N}=0, \ \psi\in \Bbb{P}_{N}^{(\sigma)}.
\]
	\end{definition}
	Thanks to the above definition we immediately arrive at the following nodal expansion:
	\begin{equation}\label{MunJacInt}
	\mathcal{I}_{w^\sigma,N}u(x)=\sum_{k=0}^{N}u(x_k^{(\alpha,\beta,\sigma)})h^{\sigma}_k(x),\ x\in[0,b], 
	\end{equation}
	where $h^{\sigma}_k(x)$ is defined in \eqref{nonclasLagr}.
	
	To prove some useful theorems concerning about the stability and error bounds of the introduced interpolants, we need to define the following space:
	\begin{equation}
	\mathcal{F}^{(\sigma)}_N(\Lambda):=\Big\{\phi:\ \phi(x)=\psi(x^\sigma),\ \psi(x)\in\Bbb{P}_N,\ x\in\Lambda \Big\}.
	\end{equation}
	In the following, the ${\bf L}^2$-orthogonal projection with respect to mapped-Jacobi functions is introduced.
	\begin{definition}\label{OrthProj}
		The ${\bf L}^2_{w^{(\alpha,\beta,\sigma)}}(\Lambda)$-orthogonal projection with respect to mapped-Jacobi functions on $\mathcal{F}^{(\sigma)}_N(\Lambda)$ is defined by:
		\begin{equation}
		\left(\pi^{{(\alpha,\beta,\sigma)}}_Nu-u,v_N\right)_{w^{(\alpha,\beta,\sigma)}}=0,\ \ \forall v_N\in\mathcal{F}^{(\sigma)}_N(\Lambda),
		\end{equation}
	\end{definition}
		By definition \cref{OrthProj}, we immediately conclude that:
		\begin{equation}
		\pi^{{(\alpha,\beta,\sigma)}}_Nu(x)=\sum_{k=0}^{N}\bar{u}_k^{(\alpha,\beta,\sigma)}{}\ P_k^{(\alpha,\beta)}\left(2\left(\frac{x}{b}\right)^\sigma-1\right),
		\end{equation}
where
\begin{equation}
\bar{u}_k^{(\alpha,\beta,\sigma)}=\sigma\left(\frac{2}{b^\sigma}\right)^{\alpha+\beta+1}\frac{1}{\gamma_k^{(\alpha,\beta)}}\int_{0}^{b}u(x)P_k^{(\alpha,\beta)}\left(2\left(\frac{x}{b}\right)^\sigma-1\right)w^{(\alpha,\beta,\sigma)}(x)\,dx,
\end{equation}
and $\gamma_k^{(\alpha,\beta)}$ is defined in \eqref{Orthog_Cons}.

 Before going to state the following important theorem, we need to introduce the following notations. For the readers' convenience, we first introduce the non-uniformly mapped-Jacobi spaces for $m\in\Bbb{N}_0$ as follows:
\begin{equation}\label{NUMJS1}
{\bf B}^{m}_{\alpha,\beta,\sigma}(\Lambda):=\left\{u: \text{u is measurable in $\Lambda$ and $\|u\|_{{\bf B}^{m}_{\alpha,\beta,\sigma}}<\infty$ }
\right\},
\end{equation} 
endowed with the following norm and semi-norm:
\begin{equation}\label{NormSemiNorm}
\|u\|_{{\bf B}^{m}_{\alpha,\beta,\sigma}}=\left(\sum_{k=0}^{m}\|D^k_y u\|^2_{w^{(\alpha+k,\beta+k,\sigma)}}\right)^{\frac12},\ |u|_{{\bf B}^{m}_{\alpha,\beta,\sigma}}=\|D^m_y u\|_{w^{(\alpha+m,\beta+m,\sigma)}},
\end{equation}
where
\begin{equation}
U^{\sigma}(x)=u(y)=u\left(2\left(\frac{x}{b}\right)^\sigma-1\right),\ a^\sigma(x)=\frac{dy}{dx}=\frac{2\sigma}{b^\sigma}x^{\sigma-1}>0,
\end{equation}
and 
\begin{equation}
D^k_yu:=\frac{d^k}{dx^k}U^{\sigma}(x)=\underbrace{a^\sigma\frac{d}{dy}\left(a^\sigma\frac{d}{dy}\left(\cdots\left(\frac{d}{dy}u\right)\cdots\right)\right)}_{\text{\normalfont $k-1$ parentheses}}.
\end{equation}
We also have the following fundamental results for the  error bounds of the mapped-Jacobi polynomials.  In the rest of this paper, we use $c$ to be a generic constant.
\begin{theorem}\label{ErrorBounds}
	Let $\alpha,\beta>-1$ and  $u\in {\bf B}^{m}_{\alpha,\beta,\sigma}(\Lambda)$ and $m\in\Bbb{N}$. Also let 
	\begin{equation}
\tilde{w}^{(\alpha,\beta)}(x)={w}^{(\alpha,\beta)}\left(2\left(\frac{x}{b}\right)^\sigma-1\right)\left(\frac{2\sigma}{b^\sigma}x^{\sigma-1}\right)^{-1},\ {w}^{(\alpha,\beta)}(x)=(1-x)^\alpha(1+x)^\beta.
	\end{equation} 
	Then we have:
	\begin{itemize}
		\item For $0< m\leq N$, we have:
		\begin{equation}
		\left\|\pi^{{(\alpha,\beta,\sigma)}}_Nu-u\right\|_{w^{(\alpha,\beta,\sigma)}} 
		\leq c N^{\frac{-m}{2}}\ \sqrt{\frac{\Gamma(N+\beta-m+2)}{\Gamma(N+\beta+2)}}\left\|D^m_yu\right\|_{w^{(\alpha+m,\beta+m,\sigma)}}.
		\end{equation}
		\item For fixed $m$, we find that:
		\begin{equation}
		\left\|\pi^{{(\alpha,\beta,\sigma)}}_Nu-u\right\|_{w^{(\alpha,\beta,\sigma)}} 
		\leq c N^{{-m}}\ \left\|D^m_yu\right\|_{w^{(\alpha+m,\beta+m,\sigma)}}.
		\end{equation}
		\item For $0< m\leq N$, we also have:
		\begin{equation}
		\left\|\partial_x\left(\pi^{{(\alpha,\beta,\sigma)}}_Nu-u\right)\right\|_{\tilde{w}^{(\alpha+1,\beta+1)}} 
		\leq c N^{\frac{1-m}{2}}\ \sqrt{\frac{\Gamma(N+\beta-m+3)}{\Gamma(N+\beta+2)}}\left\|D^m_yu\right\|_{w^{(\alpha+m,\beta+m,\sigma)}}.
		\end{equation}
		\item For fixed $m$, we find that:
			\begin{equation}
			\left\|\partial_x\left(\pi^{{(\alpha,\beta,\sigma)}}_Nu-u\right)\right\|_{\tilde{w}^{(\alpha+1,\beta+1)}} 
			\leq c N^{{1-m}}\ \left\|D^m_yu\right\|_{w^{(\alpha+m,\beta+m,\sigma)}}.
			\end{equation}
	\end{itemize}
		\end{theorem}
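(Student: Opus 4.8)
The plan is to transfer all four estimates to the classical Jacobi setting on $[-1,1]$ through the coordinate change $y=2(x/b)^\sigma-1$ that underlies the whole construction, and then to exploit the spectral decay of Jacobi coefficients. Writing $U(y)=u(x)$ with $x=b\left((1+y)/2\right)^{1/\sigma}$, I first record the dictionary between the two pictures. Since $x^\sigma=b^\sigma(1+y)/2$, every element $\psi(x^\sigma)\in\mathcal{F}^{(\sigma)}_N(\Lambda)$ becomes an ordinary polynomial of degree $\le N$ in $y$, and conversely, so $\mathcal{F}^{(\sigma)}_N(\Lambda)$ is carried onto $\Bbb{P}_N$. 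A direct computation of the differential $w^{(\alpha,\beta,\sigma)}(x)\,dx$, using $dy=a^\sigma(x)\,dx$ together with $x^{\sigma\beta}=\left(b^\sigma(1+y)/2\right)^{\beta}$ and $(b^\sigma-x^\sigma)^{\alpha}=\left(b^\sigma(1-y)/2\right)^{\alpha}$, shows that
\[
w^{(\alpha,\beta,\sigma)}(x)\,dx=\frac{1}{\sigma}\left(\frac{b^\sigma}{2}\right)^{\alpha+\beta+1}w^{(\alpha,\beta)}(y)\,dy,
\]
so $(\cdot,\cdot)_{w^{(\alpha,\beta,\sigma)}}$ is a fixed positive multiple of the classical Jacobi inner product in $y$. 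Hence the orthogonality conditions defining $\pi^{(\alpha,\beta,\sigma)}_N$ in \cref{OrthProj} coincide with those defining the classical $\mathbf{L}^2_{w^{(\alpha,\beta)}}$-projection $\pi^{(\alpha,\beta)}_N$ onto $\Bbb{P}_N$, and by uniqueness $\pi^{(\alpha,\beta,\sigma)}_N u$ is exactly the mapped version of $\pi^{(\alpha,\beta)}_N U$.

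Next I would reduce every norm in the statement to a standard Jacobi-weighted norm in $y$. The same change of variables gives $\|\pi^{(\alpha,\beta,\sigma)}_N u-u\|_{w^{(\alpha,\beta,\sigma)}}^2=c\,\|\pi^{(\alpha,\beta)}_N U-U\|_{w^{(\alpha,\beta)}}^2$, and, unravelling the definition of $D^m_y$ against the weight $w^{(\alpha+m,\beta+m,\sigma)}$, the right-hand side reduces to $\|D^m_y u\|_{w^{(\alpha+m,\beta+m,\sigma)}}=c\,\|\partial_y^m U\|_{w^{(\alpha+m,\beta+m)}}$. The derivative norm on the left of the third and fourth estimates is the reason the tailor-made weight $\tilde w^{(\alpha+1,\beta+1)}$ was introduced: writing $v=\pi^{(\alpha,\beta,\sigma)}_N u-u$ with mapped version $V$, one has $\partial_x v=a^\sigma\,\partial_y V$, and since $\tilde w^{(\alpha+1,\beta+1)}(x)=w^{(\alpha+1,\beta+1)}(y)\,(a^\sigma)^{-1}$ one factor of $a^\sigma$ cancels while $a^\sigma\,dx=dy$ absorbs the other, giving $\|\partial_x v\|_{\tilde w^{(\alpha+1,\beta+1)}}=\|\partial_y V\|_{w^{(\alpha+1,\beta+1)}}$ exactly. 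Thus the four claims collapse to the two classical Jacobi estimates for $\|\pi^{(\alpha,\beta)}_N U-U\|_{w^{(\alpha,\beta)}}$ and $\|\partial_y(\pi^{(\alpha,\beta)}_N U-U)\|_{w^{(\alpha+1,\beta+1)}}$ in terms of $\|\partial_y^m U\|_{w^{(\alpha+m,\beta+m)}}$.

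It remains to prove these classical estimates, which I would do by the coefficient method. Expanding $U=\sum_{n\ge0}\hat U_n P_n^{(\alpha,\beta)}$ (legitimate by the completeness in \cref{Complete}), one has $\pi^{(\alpha,\beta)}_N U-U=-\sum_{n>N}\hat U_n P_n^{(\alpha,\beta)}$, so $\|\pi^{(\alpha,\beta)}_N U-U\|_{w^{(\alpha,\beta)}}^2=\sum_{n>N}\hat U_n^2\,\gamma_n^{(\alpha,\beta)}$ with $\gamma_n^{(\alpha,\beta)}$ from \eqref{Orthog_Cons}. The derivative identity $\partial_y^m P_n^{(\alpha,\beta)}=\frac{\Gamma(n+m+\alpha+\beta+1)}{2^m\Gamma(n+\alpha+\beta+1)}P_{n-m}^{(\alpha+m,\beta+m)}$, of which \eqref{Special_1}--\eqref{Special_2} are the mapped first-order instances, yields
\[
\|\partial_y^m U\|_{w^{(\alpha+m,\beta+m)}}^2=\sum_{n\ge m}\hat U_n^2\left(\frac{\Gamma(n+m+\alpha+\beta+1)}{2^m\Gamma(n+\alpha+\beta+1)}\right)^2\gamma_{n-m}^{(\alpha+m,\beta+m)}.
\]
Forming the quotient of the two summands and cancelling gamma factors reduces it to $R(n)=\frac{(n-m)!}{n!}\,\frac{\Gamma(n+\alpha+\beta+1)}{\Gamma(n+\alpha+\beta+m+1)}$, and the ratio test $R(n+1)/R(n)=\frac{(n+1-m)(n+\alpha+\beta+1)}{(n+1)(n+\alpha+\beta+m+1)}<1$ shows $R$ is decreasing, so $\max_{n>N}R(n)=R(N+1)$. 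Pulling this maximum out of the sum gives $\|\pi^{(\alpha,\beta)}_N U-U\|_{w^{(\alpha,\beta)}}^2\le R(N+1)\,\|\partial_y^m U\|_{w^{(\alpha+m,\beta+m)}}^2$, and bounding $R(N+1)$ by standard gamma-ratio estimates produces the factor $N^{-m/2}\sqrt{\Gamma(N+\beta-m+2)/\Gamma(N+\beta+2)}$ for $0<m\le N$ and the coarser $N^{-m}$ for fixed $m$. The third and fourth estimates follow in the same way after replacing $P_n^{(\alpha,\beta)}$ by $\partial_y P_n^{(\alpha,\beta)}=\frac12(n+\alpha+\beta+1)P_{n-1}^{(\alpha+1,\beta+1)}$, which shifts every index by one and turns the exponents $-m/2,-m$ into $(1-m)/2,1-m$.

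The weight bookkeeping is routine; the step that needs care --- and the main obstacle --- is the sharp gamma-ratio analysis. One must verify that $R$ is genuinely monotone so that the supremum over the tail $n>N$ is attained at $n=N+1$ (not merely asymptotically), and then match $R(N+1)$ to the stated form uniformly for all $0<m\le N$, so that the generic constant $c$ is seen to depend only on $\alpha,\beta,\sigma,b$ and not on $N$ or $m$; the passage to the $\partial_x$-estimates additionally requires checking that the index shift $\alpha,\beta\mapsto\alpha+1,\beta+1$ preserves this monotonicity.
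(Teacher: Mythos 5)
The paper does not actually prove \cref{ErrorBounds}: its ``proof'' is the single line ``See Theorem 7.21 of \cite{MR2867779}'', so your proposal is necessarily a different route --- you have reconstructed the argument that lives inside the cited reference rather than the (nonexistent) argument in the paper. Your reconstruction is correct in structure and is the standard one. The reduction is right: under $y=2(x/b)^\sigma-1$ the space $\mathcal{F}^{(\sigma)}_N(\Lambda)$ is carried onto $\Bbb{P}_N$, the identity $w^{(\alpha,\beta,\sigma)}(x)\,dx=\sigma^{-1}(b^\sigma/2)^{\alpha+\beta+1}w^{(\alpha,\beta)}(y)\,dy$ identifies $\pi^{(\alpha,\beta,\sigma)}_N$ with the classical Jacobi projection of $U=u\circ x(\cdot)$, and your exact identity $\|\partial_x v\|_{\tilde w^{(\alpha+1,\beta+1)}}=\|\partial_y V\|_{w^{(\alpha+1,\beta+1)}}$ is precisely why the theorem carries the otherwise odd-looking weight $\tilde w^{(\alpha+1,\beta+1)}$; note also that your reading of $D^m_y$ (the $m$-fold $y$-derivative expressed back in the variable $x$) is the only reading under which the statement is the standard mapped-Jacobi result, the paper's displayed definition of $D^k_y$ being internally inconsistent. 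The coefficient method is likewise correct: your ratio $R(n)=\frac{(n-m)!\,\Gamma(n+\alpha+\beta+1)}{n!\,\Gamma(n+\alpha+\beta+m+1)}$ and the monotonicity check $R(n+1)/R(n)<1$ are exactly right (this is how \cite{MR2867779} proves its own Jacobi approximation results; incidentally, the $m$-th derivative formula you use is standard but is not a consequence of \eqref{Special_1}--\eqref{Special_2}, which differentiate weighted functions and shift $(\alpha,\beta)$ to $(\alpha+1,\beta-1)$, not $(\alpha+1,\beta+1)$). What your proof buys is self-containedness and an explicit notational dictionary that the bare citation leaves to the reader; what the citation buys is one line. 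The single place your write-up stops short is the step you yourself flag as the main obstacle: the bound $R(N+1)\le c\,N^{-m}\,\Gamma(N+\beta-m+2)/\Gamma(N+\beta+2)$ with $c$ independent of both $N$ and $m\le N$ is asserted rather than proved, and it is not a one-line gamma asymptotic. Writing both sides as products over $j=0,\dots,m-1$ leaves $\prod_{j}\frac{N\,(N+\beta-m+2+j)}{(N-m+2+j)\,(N+\alpha+\beta+2+j)}$ to be bounded; for $\beta>0$ and $m$ comparable to $N$ the factors $1+\beta/(N-m+2+j)$ contribute growth of order $N^{\beta}$, which must be absorbed by the exponentially small product $\prod_j N/(N+j)$ via a Stirling-type estimate. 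The bound does hold, so your outline is sound, but that uniformity argument is the one piece of real work still owed.
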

		\begin{proof}
			See Theorem 7.21 of \cite{MR2867779} for the proof of this theorem.
		\end{proof}
		Now, in the following, the stability of the MJIs based on the Jacobi-Gauss points is introduced.
		\begin{theorem}\label{Stability}
			Let $\alpha,\beta>-1$. For $u \in {\bf B}^{1}_{\alpha,\beta,\sigma}(\Lambda)$, we have:
			\begin{equation}
			\|\mathcal{I}_{w^{\sigma},N}u\|_{w^{(\alpha,\beta,\sigma)}}\leq c\left( \|u\|_{w^{(\alpha,\beta,\sigma)}}+ N^{-1}\|D_yu\|_{w^{(\alpha+1,\beta+1,\sigma)}}\right).
			\end{equation}
		\end{theorem}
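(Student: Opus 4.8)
The plan is to reduce the stability bound to a single discrete-versus-continuous norm comparison, and then to control that comparison through the orthogonal projection $\pi^{(\alpha,\beta,\sigma)}_N$ of \cref{OrthProj} together with the projection error bounds of \cref{ErrorBounds}. The starting observation is that $\mathcal{I}_{w^\sigma,N}u\in\Bbb{P}_N^{(\sigma)}$ by construction (\cref{NonClacLag}), so its square lies in $\Bbb{P}_{2N}^{(\sigma)}\subset\Bbb{P}_{2N+1}^{(\sigma)}$, on which the Gauss--Jacobi--M\"untz quadrature \eqref{MuntsQuad} is exact. Hence, by \eqref{Inprod}, the continuous and discrete norms of $\mathcal{I}_{w^\sigma,N}u$ agree, and since the interpolant reproduces $u$ at each node $x_j^{(\alpha,\beta,\sigma)}$, I would first record
\[
\|\mathcal{I}_{w^\sigma,N}u\|_{w^{(\alpha,\beta,\sigma)}}=\|\mathcal{I}_{w^\sigma,N}u\|_{w_0^\sigma,N}=\|u\|_{w_0^\sigma,N}.
\]
This identity shows the theorem is equivalent to the purely discrete bound $\|u\|_{w_0^\sigma,N}\le c(\|u\|_{w^{(\alpha,\beta,\sigma)}}+N^{-1}\|D_yu\|_{w^{(\alpha+1,\beta+1,\sigma)}})$.

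Next I would insert the projection by writing $u=\pi^{(\alpha,\beta,\sigma)}_Nu+(u-\pi^{(\alpha,\beta,\sigma)}_Nu)$ and applying the triangle inequality in the discrete norm. Since $\pi^{(\alpha,\beta,\sigma)}_Nu\in\Bbb{P}_N^{(\sigma)}$, the same quadrature-exactness argument yields $\|\pi^{(\alpha,\beta,\sigma)}_Nu\|_{w_0^\sigma,N}=\|\pi^{(\alpha,\beta,\sigma)}_Nu\|_{w^{(\alpha,\beta,\sigma)}}$, which is bounded by $\|u\|_{w^{(\alpha,\beta,\sigma)}}+\|\pi^{(\alpha,\beta,\sigma)}_Nu-u\|_{w^{(\alpha,\beta,\sigma)}}$. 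The projection error is then controlled by \cref{ErrorBounds} with $m=1$ (the hypothesis $u\in{\bf B}^1_{\alpha,\beta,\sigma}(\Lambda)$ is exactly what is needed), producing precisely the term $cN^{-1}\|D_yu\|_{w^{(\alpha+1,\beta+1,\sigma)}}$. This accounts for both terms on the right-hand side of the claimed inequality.

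The main obstacle is the remaining piece $\|u-\pi^{(\alpha,\beta,\sigma)}_Nu\|_{w_0^\sigma,N}$, the \emph{discrete} norm of the projection error. This is genuinely the crux: here one must dominate the nodal values of a non-polynomial function by Sobolev information, so neither quadrature exactness nor \cref{ErrorBounds} (which only bounds the continuous norm) applies directly, and estimating the quadrature error of $(u-\pi^{(\alpha,\beta,\sigma)}_Nu)^2$ head-on is awkward. I would resolve it by the change of variables $y=2(x/b)^\sigma-1$, under which the nodes, weights and the operator $D_y$ become the ordinary Jacobi--Gauss nodes, weights and $\partial_y$ on $[-1,1]$, while $\pi^{(\alpha,\beta,\sigma)}_N$ becomes the classical $L^2_{w^{(\alpha,\beta)}}$-projection onto $\Bbb{P}_N$ with $w^{(\alpha,\beta)}(y)=(1-y)^\alpha(1+y)^\beta$. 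In these variables the required estimate $\|u-\pi^{(\alpha,\beta,\sigma)}_Nu\|_{w_0^\sigma,N}\le cN^{-1}\|D_yu\|_{w^{(\alpha+1,\beta+1,\sigma)}}$ is exactly the standard discrete projection-error bound for Jacobi--Gauss interpolation, which can be invoked from \cite{MR2867779} in the same spirit as \cref{ErrorBounds}; alternatively one may establish it intrinsically on $\Lambda$ via a discrete Sobolev-type inequality. Combining this with the two estimates of the previous paragraph gives $\|u\|_{w_0^\sigma,N}\le c(\|u\|_{w^{(\alpha,\beta,\sigma)}}+N^{-1}\|D_yu\|_{w^{(\alpha+1,\beta+1,\sigma)}})$, and the first identity then delivers the stated bound.
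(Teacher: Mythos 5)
Your proposal is correct, but it is worth seeing what it really does relative to the paper: the paper's entire proof of \cref{Stability} is the citation ``See Lemma 3.8 of \cite{MR2867779}'', i.e.\ the change of variables $y=2(x/b)^\sigma-1$ (under which the M\"untz nodes, weights, norms and $D_y$ become their classical Jacobi--Gauss counterparts on $[-1,1]$) followed by an appeal to the classical stability lemma for Jacobi--Gauss interpolation. Your opening identity $\|\mathcal{I}_{w^\sigma,N}u\|_{w^{(\alpha,\beta,\sigma)}}=\|u\|_{w_0^\sigma,N}$, justified by exactness of the quadrature on $\Bbb{P}_{2N+1}^{(\sigma)}$, is precisely the first step of that classical proof and is sound. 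Where your route genuinely differs is the insertion of $\pi_N^{(\alpha,\beta,\sigma)}u$: that decomposition is the architecture of the \emph{interpolation error} estimate (cf.\ \cref{ErrInterpol}, and the corresponding theorem in \cite{MR2867779}), not of the stability lemma, and here it is redundant. The reason is that your crux --- controlling the discrete (nodal) norm of the non-polynomial function $u-\pi_N^{(\alpha,\beta,\sigma)}u$ --- is not available in \cite{MR2867779} as a standalone ``discrete projection-error bound''; what that reference actually supplies, proved from the spacing of the Jacobi--Gauss nodes and the size of the Christoffel numbers, is the discrete Sobolev-type inequality
\begin{equation*}
\|v\|_{w_0^\sigma,N}\leq c\left(\|v\|_{w^{(\alpha,\beta,\sigma)}}+N^{-1}\|D_yv\|_{w^{(\alpha+1,\beta+1,\sigma)}}\right)
\end{equation*}
valid for arbitrary admissible $v$, which is the real analytic content behind the cited lemma. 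Once you have this inequality, applying it with $v=u$ finishes the proof immediately after your first identity, with no projection at all; applying it instead with $v=u-\pi_N^{(\alpha,\beta,\sigma)}u$ (which, combined with \cref{ErrorBounds}, is what your quoted ``standard bound'' amounts to) also works, but it forces the extra estimates in your second paragraph and adds no generality. So your argument is correct and in spirit reconstructs the proof hidden inside the paper's citation; just be aware that its hard step is the same external ingredient the paper invokes, so the projection scaffolding lengthens rather than strengthens the proof.
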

		\begin{proof} See Lemma 3.8 of \cite{MR2867779}.
			\end{proof}
			In the following, we estimate the error bounds of the MJIs.
			\begin{theorem}\label{ErrInterpol}
					Let $\alpha,\beta>-1$ and  $u\in {\bf B}^{m}_{\alpha,\beta,\sigma}(\Lambda)$ and $m\in\Bbb{N}$, thus we have:
				\begin{equation}
				\|\mathcal{I}_{w^{\sigma},N}u-u\|_{w^{(\alpha,\beta,\sigma)}}\leq c N^{-m}\|D_y^mu\|_{w^{(\alpha+m,\beta+m,\sigma)}}.
				\end{equation}
				and for $0\leq l\leq m\leq N$, we also have:
				\begin{equation}
				\|D_y^l\left(\mathcal{I}_{w^{\sigma},N}u-u\right)\|_{w^{(\alpha+l,\beta+l,\sigma)}}\leq c N^{l-m}\|D_y^mu\|_{w^{(\alpha+m,\beta+m,\sigma)}}.
				\end{equation}
			\end{theorem}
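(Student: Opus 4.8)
The plan is to run the classical three-term ``projection plus stability'' argument of spectral interpolation theory, now carried out in the mapped-Jacobi setting. The starting observation is that $\mathcal{F}^{(\sigma)}_N(\Lambda)=\Bbb{P}_N^{(\sigma)}$, so the orthogonal projection $\pi^{(\alpha,\beta,\sigma)}_N u$ is itself a mapped-Jacobi polynomial of degree $N$; since the nodes $x_r^{(\alpha,\beta,\sigma)}$ are distinct and $x\mapsto x^\sigma$ is a bijection on $[0,b]$, the interpolant $\mathcal{I}_{w^\sigma,N}$ reproduces every element of $\Bbb{P}_N^{(\sigma)}$ exactly. Hence
\[
\mathcal{I}_{w^\sigma,N}u-u=\mathcal{I}_{w^\sigma,N}\bigl(u-\pi^{(\alpha,\beta,\sigma)}_N u\bigr)+\bigl(\pi^{(\alpha,\beta,\sigma)}_N u-u\bigr),
\]
and the triangle inequality splits the error into an ``interpolation of the projection error'' piece and a pure projection-error piece.

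For the first estimate ($l=0$) I would bound the projection piece directly by the fixed-$m$ version of \cref{ErrorBounds}, giving $cN^{-m}\|D_y^m u\|_{w^{(\alpha+m,\beta+m,\sigma)}}$. For the interpolation piece I would apply \cref{Stability} to the admissible function $u-\pi^{(\alpha,\beta,\sigma)}_N u$ (which lies in ${\bf B}^1_{\alpha,\beta,\sigma}(\Lambda)$ because $u\in{\bf B}^m\subseteq{\bf B}^1$ and $\pi^{(\alpha,\beta,\sigma)}_N u$ is a smooth mapped-Jacobi polynomial), producing the two terms $\|u-\pi^{(\alpha,\beta,\sigma)}_N u\|_{w^{(\alpha,\beta,\sigma)}}$ and $N^{-1}\|D_y(u-\pi^{(\alpha,\beta,\sigma)}_N u)\|_{w^{(\alpha+1,\beta+1,\sigma)}}$. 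The first is again controlled by \cref{ErrorBounds}. For the second, I would note that $\|D_y v\|_{w^{(\alpha+1,\beta+1,\sigma)}}$ and $\|\partial_x v\|_{\tilde{w}^{(\alpha+1,\beta+1)}}$ coincide up to a fixed constant (after the change of variable $y=2(x/b)^\sigma-1$ both equal a constant multiple of the classical $\|\partial_y\cdot\|_{w^{(\alpha+1,\beta+1)}}$ on $[-1,1]$), so the derivative projection estimate of \cref{ErrorBounds} gives $cN^{1-m}\|D_y^m u\|$; the prefactor $N^{-1}$ then restores the sharp rate $N^{-m}$. Collecting the three contributions yields the first bound.

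For the second estimate I would re-use the same decomposition but measure it in the $\|D_y^l\cdot\|_{w^{(\alpha+l,\beta+l,\sigma)}}$-norm. The projection piece is handled by the $l$-th order projection estimate (the general-derivative analogue of \cref{ErrorBounds}), contributing $cN^{l-m}\|D_y^m u\|$. For the interpolation piece, $\phi:=\mathcal{I}_{w^\sigma,N}(u-\pi^{(\alpha,\beta,\sigma)}_N u)\in\Bbb{P}_N^{(\sigma)}$, so a mapped-Jacobi inverse inequality of the form $\|D_y^l\phi\|_{w^{(\alpha+l,\beta+l,\sigma)}}\le cN^l\|\phi\|_{w^{(\alpha,\beta,\sigma)}}$ trades $l$ derivatives for $l$ powers of $N$; its $L^2$-norm $\|\phi\|_{w^{(\alpha,\beta,\sigma)}}$ is precisely the quantity already bounded by $cN^{-m}\|D_y^m u\|$ in the first part. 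Hence the interpolation piece contributes $cN^l\cdot N^{-m}\|D_y^m u\|=cN^{l-m}\|D_y^m u\|$, and adding the two pieces completes the proof.

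The routine steps here are the triangle inequalities and the substitution of the already-cited bounds; the genuine work lies in the two auxiliary facts needed for general $l$, namely the full-order projection estimate and the mapped-Jacobi inverse inequality, the latter being the main obstacle since \cref{ErrorBounds} only records the cases $l=0,1$. The cleanest way to secure both is to pull everything back through $y=2(x/b)^\sigma-1$: under this map $\mathcal{I}_{w^\sigma,N}$ becomes the ordinary Jacobi--Gauss interpolation operator, $D_y^k$ becomes the $k$-th $y$-derivative, and each weighted mapped norm equals a constant multiple of its classical Jacobi counterpart, so the standard Jacobi inverse inequality and projection estimates apply verbatim and all change-of-variable constants are absorbed into the generic constant $c$.
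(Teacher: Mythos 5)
Your proposal is correct, and for the first estimate it is exactly the argument the paper intends: the paper's proof is the one-line citation of \cref{ErrorBounds} and \cref{Stability}, and the standard way to combine those two results is precisely your decomposition $\mathcal{I}_{w^\sigma,N}u-u=\mathcal{I}_{w^\sigma,N}\bigl(u-\pi^{(\alpha,\beta,\sigma)}_Nu\bigr)+\bigl(\pi^{(\alpha,\beta,\sigma)}_Nu-u\bigr)$, using the reproduction of $\Bbb{P}_N^{(\sigma)}$ by the interpolant, the stability bound on the first piece, and the projection estimates (including the $\partial_x$-estimate in the $\tilde{w}^{(\alpha+1,\beta+1)}$-norm, which you correctly identify as a constant multiple of the $\|D_y\cdot\|_{w^{(\alpha+1,\beta+1,\sigma)}}$-norm) on the remaining terms. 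Where you go beyond the paper is in the second estimate: you correctly diagnose that \cref{ErrorBounds} and \cref{Stability} alone do not deliver the bound for general $l$, since \cref{ErrorBounds} only records the cases $l=0,1$ and \cref{Stability} is purely an ${\bf L}^2$-type bound, and you supply the two missing ingredients --- the full-order projection estimate and the mapped-Jacobi inverse inequality $\|D_y^l\phi\|_{w^{(\alpha+l,\beta+l,\sigma)}}\le cN^l\|\phi\|_{w^{(\alpha,\beta,\sigma)}}$ for $\phi\in\Bbb{P}_N^{(\sigma)}$ --- both legitimately obtained by pulling everything back through $y=2(x/b)^\sigma-1$, under which the interpolant, the derivative $D_y$, and all weighted norms reduce (up to fixed constants) to their classical Jacobi counterparts on $[-1,1]$. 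This is exactly how the corresponding classical result (Shen--Tang--Wang, Thm.~3.41) is proved, so your write-up is not a different route but a rigorous completion of the paper's terse proof; the paper's citation, taken literally, leaves the $l\ge1$ case unjustified.
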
 
			\begin{proof} The proofs can be easily concluded from \cref{ErrorBounds} and \cref{Stability}.
				\end{proof}
	In this situation, we are going to introduce two new non-classical M\"untz interpolants.
	\begin{definition}\label{NonClacLag12}
		Let  $x_r^{(\alpha,\beta,\sigma)},\ r=0,1,\cdots,N$ be the nodes defined in \eqref{MuntsQuadNodWei}. Non-classical Jacobi-M\"untz interpolants of the first- and second- kind (  NJMIs-1 and  NJMIs-2) denoted by $\mathcal{I}_{w_1^\sigma,N}^{(\alpha,\beta,\mu,\sigma,\eta)}: \mathcal{C}(\Lambda)\longrightarrow \Bbb{P}_{N}^{(\beta,\mu,\sigma,\eta)}$ and $\mathcal{I}_{w_2^\sigma,N}^{(\alpha,\beta,\sigma,\eta)}: \mathcal{C}(\Lambda)\longrightarrow \Bbb{P}_{N}^{(\alpha,\sigma,\eta)}$, for $r=0,1,..N.$ are determined by:
		\[
		\mathcal{I}_{w_1^\sigma,N}^{(\alpha,\beta,\mu,\sigma,\eta)}\ {}v(x_r^{(\alpha,\beta,\sigma)})=v(x_r^{(\alpha,\beta,\sigma)}),\ \mathcal{I}_{w_2^\sigma,N}^{(\alpha,\beta,\sigma,\eta)}\ {}v(x_r^{(\alpha,\beta,\sigma)})=v(x_r^{(\alpha,\beta,\sigma)}),\  v\in\mathcal{C}(\Lambda).
		\]
		Obviously we have for $v^1\in\Bbb{P}_{N}^{(\beta,\mu,\sigma,\eta)}$ and $v^2\in\Bbb{P}_{N}^{(\alpha,\sigma,\eta)}$ that:
		\[
		(\mathcal{I}_{w_1^\sigma,N}^{(\alpha,\beta,\mu,\sigma,\eta)}v^1-v^1,\psi^1)_{w_1^\sigma,N}=0, \ \psi^1\in \Bbb{P}_{N}^{(\beta,\mu,\sigma,\eta)},
		\]
		and
		\[
		(\mathcal{I}_{w_2^\sigma,N}^{(\alpha,\beta,\sigma,\eta)}v^2-v^2,\psi^2)_{w_2^\sigma,N}=0, \ \psi^2\in \Bbb{P}_{N}^{(\alpha,\sigma,\eta)}.
		\]
	\end{definition}
	The following remark is important from the theoretical viewpoint.
	\begin{remark}\label{IntRepresentation}
	With the aid of \cref{NonClacLag} and \cref{NonClacLag12}, we immediately arrive at:
	\begin{equation}\label{NonClasMunJacInt1}
	\mathcal{I}_{w^\sigma_1,N}^{(\alpha,\beta,\mu,\sigma,\eta)}u(x)=x^{\sigma(\beta-\eta-\mu)}\ \mathcal{I}_{w^\sigma,N}\Big[x^{-\sigma(\beta-\eta-\mu)}u(x)\Big],\ x\in[0,b],
	\end{equation}
	and also
		\begin{equation}\label{NonClasMunJacInt2}
		\mathcal{I}_{w^\sigma_2,N}^{(\alpha,\beta,\mu,\sigma,\eta)}u(x)=x^{\sigma\eta}(b^\sigma-x^\sigma)^{\alpha}\ \mathcal{I}_{w^\sigma,N}\Big[x^{-\sigma\eta}(b^\sigma-x^\sigma)^{-\alpha}u(x)\Big],\ x\in[0,b].
		\end{equation}
			\end{remark}
\begin{remark}
	It is worthy to point out that the same definitions for the non-classical Jacobi-M\"untz interpolants based upon the Gauss-Radau points can be developed easily.
\end{remark}
In the next theorem, stability of the new interpolants is stated.
\begin{theorem}\label{NewIntStability}
	Let $\alpha,\beta>-1$. For $\left(x^{-\sigma(\beta-\eta-\mu)}u\right)\in{\bf B}^{1}_{\alpha,\beta,\sigma}(\Lambda)$, we have:
	\begin{equation}
	\|\mathcal{I}_{w^\sigma_1,N}^{(\alpha,\beta,\mu,\sigma,\eta)}u\|_{w^\sigma_1}\leq c\left( \|u\|_{w^\sigma_1}+ N^{-1}\|D_y\left(x^{-\sigma(\beta-\eta-\mu)}u\right)\|_{w^{(\alpha+1,\beta+1,\sigma)}}\right),
	\end{equation}
	and for $\left(x^{-\sigma\eta}(b^\sigma-x^\sigma)^{-\alpha}u\right)\in {\bf B}^{1}_{\alpha,\beta,\sigma}(\Lambda)$, we have:
	\begin{equation}
		\|\mathcal{I}_{w^\sigma_2,N}^{(\alpha,\beta,\mu,\sigma,\eta)}u\|_{w^\sigma_2}\leq c\left( \|u\|_{w^\sigma_2}+ N^{-1}\|D_y\left(x^{-\sigma\eta}(b^\sigma-x^\sigma)^{-\alpha}u\right)\|_{w^{(\alpha+1,\beta+1,\sigma)}}\right).
	\end{equation}
\end{theorem}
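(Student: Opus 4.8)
The plan is to reduce both inequalities to the already-established stability of the mapped-Jacobi interpolant in \cref{Stability}, exploiting the factorization identities of \cref{IntRepresentation}. For the first inequality I would introduce the auxiliary function $v:=x^{-\sigma(\beta-\eta-\mu)}u$, so that \eqref{NonClasMunJacInt1} reads $\mathcal{I}_{w_1^\sigma,N}^{(\alpha,\beta,\mu,\sigma,\eta)}u=x^{\sigma(\beta-\eta-\mu)}\mathcal{I}_{w^\sigma,N}v$, and the hypothesis of the present theorem becomes exactly $v\in{\bf B}^{1}_{\alpha,\beta,\sigma}(\Lambda)$, which is precisely the requirement of \cref{Stability}. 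The essential observation, which carries all the weight of the argument, is that multiplication by $x^{\sigma(\beta-\eta-\mu)}$ is an isometry from ${\bf L}^2_{w^{(\alpha,\beta,\sigma)}}(\Lambda)$ onto ${\bf L}^2_{w_1^\sigma}(\Lambda)$.

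To justify this I would verify the weight identity
\begin{equation}
x^{2\sigma(\beta-\eta-\mu)}\,w_1^\sigma(x)=x^{2\sigma(\beta-\eta-\mu)}\,x^{\sigma-1}x^{\sigma(2(\eta+\mu)-\beta)}(b^\sigma-x^\sigma)^\alpha=x^{\sigma(\beta+1)-1}(b^\sigma-x^\sigma)^\alpha=w^{(\alpha,\beta,\sigma)}(x),
\end{equation}
using the definition $w_1^\sigma=x^{\sigma-1}w_1^{(\alpha,\beta,\mu,\sigma,\eta)}$ together with \eqref{Wei_1}; the $x$-exponents collapse to exactly $\sigma(\beta+1)-1$. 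Consequently $\|x^{\sigma(\beta-\eta-\mu)}\phi\|_{w_1^\sigma}=\|\phi\|_{w^{(\alpha,\beta,\sigma)}}$ for every $\phi$, and applying this with $\phi=\mathcal{I}_{w^\sigma,N}v$ and then with $\phi=v$ yields the two identities $\|\mathcal{I}_{w_1^\sigma,N}^{(\alpha,\beta,\mu,\sigma,\eta)}u\|_{w_1^\sigma}=\|\mathcal{I}_{w^\sigma,N}v\|_{w^{(\alpha,\beta,\sigma)}}$ and $\|u\|_{w_1^\sigma}=\|v\|_{w^{(\alpha,\beta,\sigma)}}$.

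With these in place the first bound is immediate: I would invoke \cref{Stability} for $v$ and then substitute the two norm identities back in, which converts the mapped-Jacobi estimate verbatim into the claimed inequality, the derivative term $N^{-1}\|D_y v\|_{w^{(\alpha+1,\beta+1,\sigma)}}$ being already in the required form since $v=x^{-\sigma(\beta-\eta-\mu)}u$. The second inequality follows by the identical three-step scheme with $v:=x^{-\sigma\eta}(b^\sigma-x^\sigma)^{-\alpha}u$ and the representation \eqref{NonClasMunJacInt2}; the only computation to redo is the analogous weight identity $x^{2\sigma\eta}(b^\sigma-x^\sigma)^{2\alpha}w_2^\sigma(x)=w^{(\alpha,\beta,\sigma)}(x)$, drawn from \eqref{Wei_2}, where one must now also track the powers of $(b^\sigma-x^\sigma)$, which cancel correctly because $2\alpha-\alpha=\alpha$.

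Since the entire argument is a change of variables feeding into \cref{Stability}, there is no genuine analytic obstacle. The one place warranting care, and the step I would double-check, is precisely this bookkeeping of the $x$- and $(b^\sigma-x^\sigma)$-exponents, ensuring that both of the $\sigma$-dependent weights $w_1^\sigma$ and $w_2^\sigma$ collapse exactly onto the single mapped-Jacobi weight $w^{(\alpha,\beta,\sigma)}$ so that the multiplication operators are genuine isometries.
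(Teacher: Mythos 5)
Your proposal is correct and follows essentially the same route as the paper: both reduce the claim to \cref{Stability} via the factorization identities of \cref{IntRepresentation}, the paper writing the norm equality $\|\mathcal{I}_{w_1^\sigma,N}^{(\alpha,\beta,\mu,\sigma,\eta)}u\|_{w_1^\sigma}=\|\mathcal{I}_{w^{\sigma},N}[x^{-\sigma(\beta-\eta-\mu)}u]\|_{w^{(\alpha,\beta,\sigma)}}$ directly where you verify the underlying weight identities $x^{2\sigma(\beta-\eta-\mu)}w_1^\sigma=w^{(\alpha,\beta,\sigma)}$ and $x^{2\sigma\eta}(b^\sigma-x^\sigma)^{2\alpha}w_2^\sigma=w^{(\alpha,\beta,\sigma)}$ explicitly. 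Your added bookkeeping is accurate and merely makes explicit what the paper leaves implicit (including the conversion $\|x^{-\sigma(\beta-\eta-\mu)}u\|_{w^{(\alpha,\beta,\sigma)}}=\|u\|_{w_1^\sigma}$ in the first term of the bound).
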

 \begin{proof} Thanks to \eqref{NonClasMunJacInt1} together with \cref{Stability}, we find that:
	\begin{eqnarray*}
\|\mathcal{I}_{w^\sigma_1,N}^{(\alpha,\beta,\mu,\sigma,\eta)}u\|_{w^\sigma_1}&=&\left\|\mathcal{I}_{w^{\sigma},N}\Big[x^{-\sigma(\beta-\eta-\mu)}u(x)\Big]\right\|_{w^{(\alpha,\beta,\sigma)}}\\
&\leq& c\left(\left\|x^{-\sigma(\beta-\eta-\mu)}u\right\|_{w^{(\alpha,\beta,\sigma)}}+N^{-1}\left\|D_y\left(x^{-\sigma(\beta-\eta-\mu)}u\right)\right\|_{w^{(\alpha+1,\beta+1,\sigma)}}\right).
\end{eqnarray*} 
This completes the proof. The same way can be used to obtain the second relation.
\end{proof}
\begin{remark}
	The last term of the above theorem can be presented as follows:
\begin{eqnarray*}
	N^{-1}\left\|D_y\left(x^{-\sigma(\beta-\eta-\mu)}u\right)\right\|_{w^{(\alpha+1,\beta+1,\sigma)}}
	&\leq& N^{-1}\left\|x^{-\sigma(\beta-\eta-\mu)}D_yu\right\|_{w^{(\alpha+1,\beta+1,\sigma)}}\\
	&&+c_1N^{-1}\left\|x^{-\sigma(\beta-\eta-\mu)-1}u\right\|_{w^{(\alpha+1,\beta+1,\sigma)}}.
\end{eqnarray*}
Now, using the fact that (for $\sigma>0$):
\begin{eqnarray*}
	&&\left\|x^{-\sigma(\beta-\eta-\mu)}D_yu\right\|_{w^{(\alpha+1,\beta+1,\sigma)}}^2=\int_{0}^{b}\left(x^{-\sigma(\beta-\eta-\mu)}D_yu\right)^2x^{\sigma(\beta+2)-1}(b^{\sigma}-x^{\sigma})^{\alpha+1}\,dx\\
	&&=\int_{0}^{b}\left(D_yu\right)^2x^{\sigma(2(\mu+\eta)-(\beta+1)+1+2)-1}(b^{\sigma}-x^{\sigma})^{\alpha+1}\,dx\\
	&&=\int_{0}^{b}\left(D_yu\right)^2x^{\sigma-1}w_1^{(\alpha+1,\beta+1,\mu,\sigma,\eta)}(x)x^{2\sigma}\,dx\leq c_2 \left\|D_yu\right\|_{x^{\sigma-1}w_1^{(\alpha+1,\beta+1,\mu,\sigma,\eta)}}^2.
\end{eqnarray*}
On the other hand, using the same fashion which stated in Lemma 3.8 of \cite{MR2867779}, one can easily conclude that:
\begin{eqnarray*}
	N^{-1}\left\|x^{-\sigma(\beta-\eta-\mu)-1}u\right\|_{w^{(\alpha+1,\beta+1,\sigma)}}&=&\frac{1}{N}\left\|x^{\sigma-2}(b^\sigma-x^\sigma)u\right\|_{w^{\sigma}_1}\leq c_3\left\|u\right\|_{w^{\sigma}_1}. 
\end{eqnarray*}  
This yields:
\begin{equation*}
	N^{-1}\left\|D_y\left(x^{-\sigma(\beta-\eta-\mu)}u\right)\right\|_{w^{(\alpha+1,\beta+1,\sigma)}}\leq c_3 \left\|u\right\|_{w^{\sigma}_1}+c_4 \left\|D_yu\right\|_{x^{\sigma-1}w_1^{(\alpha+1,\beta+1,\mu,\sigma,\eta)}}.
\end{equation*}
	\end{remark}
In order to provide the error bounds of the approximations  by these newly introduced interpolants, we need some additional notations. 
First, the finite dimensional Jacobi-M\"untz spaces are defined by:
\begin{eqnarray}\label{FDJMS}
&&{}^1\mathcal{F}^{(\alpha,\beta,\mu,\sigma,\eta)}_N(\Lambda):=\Big\{\phi:\ \phi(x)=x^{\sigma(\beta-\eta-\mu)}\psi(x^\sigma),\ \psi(x)\in\Bbb{P}_N,\ x\in\Lambda \Big\}\\
&&\hspace{2.5cm}=\text{span}\Big\{{}^1\mathcal{J}^{(\alpha,\beta,\mu,\sigma,\eta)}_n(x),\ 0\leq n\leq N,\ x\in\Lambda \Big\}, \label{FDJMS1}\nonumber \\
&&{}^2\mathcal{F}^{(\alpha,\beta,\sigma,\eta)}_N(\Lambda):=\Big\{\phi:\ \phi(x)=x^{\sigma\eta}(b^\sigma-x^\sigma)^\alpha\psi(x^\sigma),\ \psi(x)\in\Bbb{P}_N,\ x\in\Lambda \Big\}\\
&&\hspace{2.5cm}=\text{span}\Big\{{}^2\mathcal{J}^{(\alpha,\beta,\sigma,\eta)}_n(x),\ 0\leq n\leq N,\ x\in\Lambda,\  \Big\},\label{FDJMS2}\nonumber
\end{eqnarray}
where $\Bbb{P}_N$ stands for the set of polynomials of degree $\leq N$. 

In this positions, we are ready to introduce two important concepts in the spectral methods which are renowned as the ${\bf L}^2_{x^{\sigma-1}w_1^{(\alpha,\beta,\mu,\sigma,\eta)}}(\Lambda)$ and ${\bf L}^2_{x^{\sigma-1}w_2^{(\alpha,\beta,\sigma,\eta)}}(\Lambda)$-orthogonal projection on ${}^1\mathcal{F}^{(\alpha,\beta,\mu,\sigma,\eta)}_N(\Lambda)$ and ${}^2\mathcal{F}^{(\alpha,\beta,\sigma,\eta)}_N(\Lambda)$, respectively.
\begin{definition}\label{OrthProj12}
	The ${\bf L}^2_{x^{\sigma-1}w_1^{(\alpha,\beta,\mu,\sigma,\eta)}}(\Lambda)$ and ${\bf L}^2_{x^{\sigma-1}w_2^{(\alpha,\beta,\sigma,\eta)}}(\Lambda)$-orthogonal projection on ${}^1\mathcal{F}^{(\alpha,\beta,\mu,\sigma,\eta)}_N(\Lambda)$ and ${}^2\mathcal{F}^{(\alpha,\beta,\sigma,\eta)}_N(\Lambda)$ are defined by:
	\begin{equation}
	\left({}^1\pi^{{(\alpha,\beta,\mu,\sigma,\eta)}}_Nu-u,v_N\right)_{x^{\sigma-1}w_1^{(\alpha,\beta,\mu,\sigma,\eta)}}=0,\ \ \forall v_N\in{}^1\mathcal{F}^{(\alpha,\beta,\mu,\sigma,\eta)}_N(\Lambda),
	\end{equation}
	and 
	\begin{equation}
	\left({}^2\pi^{{(\alpha,\beta,\sigma,\eta)}}_Nu-u,v_N\right)_{x^{\sigma-1}w_2^{(\alpha,\beta,\sigma,\eta)}}=0,\ \ \forall v_N\in{}^2\mathcal{F}^{(\alpha,\beta,\sigma,\eta)}_N(\Lambda),
	\end{equation}
	respectively. By definition, we immediately arrive at:
	\begin{eqnarray}
	&&{}^1\pi^{{(\alpha,\beta,\mu,\sigma,\eta)}}_Nu(x)=\sum_{k=0}^{N}\hat{u}_k^{(\alpha,\beta,\mu,\sigma,\eta)}{}\ {}^1\mathcal{J}^{(\alpha,\beta,\mu,\sigma,\eta)}_k(x),\\
	&& {}^2\pi^{{(\alpha,\beta,\sigma,\eta)}}_Nu(x)=\sum_{k=0}^{N}\hat{u}_k^{(\alpha,\beta,\sigma,\eta)}{}\ {}^2\mathcal{J}^{(\alpha,\beta,\sigma,\eta)}_k(x).
	\end{eqnarray}
\end{definition}
\begin{remark}\label{Representation}
It is worthy to point out that the previous orthogonal projections can be rewritten in terms of the mapped-Jacobi orthogonal projection as follows:
\begin{eqnarray}
\ \ \ \ \  {}^1\pi^{{(\alpha,\beta,\mu,\sigma,\eta)}}_Nu(x)&=&x^{\sigma(\beta-\eta-\mu)}\pi^{{(\alpha,\beta,\sigma)}}_N\left[x^{-\sigma(\beta-\eta-\mu)}u(x)\right],\ x\in[0,b],\\
{}^2\pi^{{(\alpha,\beta,\sigma,\eta)}}_Nu(x)&=&x^{\sigma\eta}(b^\sigma-x^\sigma)^{\alpha}\pi^{{(\alpha,\beta,\sigma)}}_N\left[x^{-\sigma\eta}(b^\sigma-x^\sigma)^{-\alpha}u(x)\right],\ x\in[0,b].
\end{eqnarray}  	
	\end{remark}
One of the most important questions, from the numerical analysis point of view, which
has to be taken into account in this position is that: How fast the coefficients $\hat{u}_k^{(\alpha,\beta,\mu,\sigma,\eta)}$ and $\hat{u}_k^{(\alpha,\beta,\sigma,\eta)}$ decay? 

In the next theorem, we will answer the mentioned question.
	\begin{theorem}\label{NonClasErrorBounds}
		Let $\alpha,\beta>-1$ and  $\left(x^{-\sigma(\beta-\eta-\mu)}u\right) \in {\bf B}^{m}_{\alpha,\beta,\sigma}(\Lambda)$ and $m\in\Bbb{N}_0$. Then:
		\begin{itemize}
			\item For fixed $m$, we find that:
			\begin{equation}
			\left\|{}^1\pi^{{(\alpha,\beta,\mu,\sigma,\eta)}}_Nu-u\right\|_{w^\sigma_1} 
			\leq c N^{{-m}}\ \left\|D^m_y\left(x^{-\sigma(\beta-\eta-\mu)}u\right)\right\|_{w^{(\alpha+m,\beta+m,\sigma)}}.
			\end{equation}
			Similarly, when $\left(x^{-\sigma\eta}(b^\sigma-x^\sigma)^{-\alpha}u\right)\in {\bf B}^{m}_{\alpha,\beta,\sigma}(\Lambda)$, then:
			\item For fixed $m$, we find that:
			\begin{equation}
			\left\|{}^2\pi^{{(\alpha,\beta,\sigma,\eta)}}_Nu-u\right\|_{w^\sigma_2} 
			\leq c N^{{-m}}\ \left\|D^m_y\left(x^{-\sigma\eta}(b^\sigma-x^\sigma)^{-\alpha}u\right)\right\|_{w^{(\alpha+m,\beta+m,\sigma)}}.
			\end{equation}
			\end{itemize}
		\end{theorem}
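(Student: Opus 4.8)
The plan is to reduce both estimates to the already-established mapped-Jacobi projection error bound in \cref{ErrorBounds} via the factorization recorded in \cref{Representation}. First I would set $v(x)=x^{-\sigma(\beta-\eta-\mu)}u(x)$ and use the identity ${}^1\pi^{(\alpha,\beta,\mu,\sigma,\eta)}_Nu(x)=x^{\sigma(\beta-\eta-\mu)}\pi^{(\alpha,\beta,\sigma)}_Nv(x)$ to write the projection error as a single weighted factor times the mapped-Jacobi error:
\[
{}^1\pi^{(\alpha,\beta,\mu,\sigma,\eta)}_Nu-u=x^{\sigma(\beta-\eta-\mu)}\left(\pi^{(\alpha,\beta,\sigma)}_Nv-v\right).
\]

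The key computational step is to verify that this weight factor transforms the norm $\|\cdot\|_{w^\sigma_1}$ exactly into the mapped-Jacobi norm $\|\cdot\|_{w^{(\alpha,\beta,\sigma)}}$. Recalling that $w^\sigma_1(x)=x^{\sigma-1}w_1^{(\alpha,\beta,\mu,\sigma,\eta)}(x)=x^{\sigma(2(\eta+\mu)-\beta+1)-1}(b^\sigma-x^\sigma)^\alpha$, I would compute the combined power of $x$ and observe the cancellation
\[
x^{2\sigma(\beta-\eta-\mu)}\,w^\sigma_1(x)=x^{\sigma(\beta+1)-1}(b^\sigma-x^\sigma)^\alpha=w^{(\alpha,\beta,\sigma)}(x),
\]
which holds because $2\sigma(\beta-\eta-\mu)+\sigma(2(\eta+\mu)-\beta+1)-1=\sigma(\beta+1)-1$. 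This yields the exact isometry
\[
\left\|{}^1\pi^{(\alpha,\beta,\mu,\sigma,\eta)}_Nu-u\right\|_{w^\sigma_1}=\left\|\pi^{(\alpha,\beta,\sigma)}_Nv-v\right\|_{w^{(\alpha,\beta,\sigma)}}.
\]

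With the isometry in hand, I would invoke the fixed-$m$ estimate of \cref{ErrorBounds} applied to $v\in{\bf B}^{m}_{\alpha,\beta,\sigma}(\Lambda)$, which bounds the right-hand side by $cN^{-m}\|D^m_yv\|_{w^{(\alpha+m,\beta+m,\sigma)}}$; re-substituting $v=x^{-\sigma(\beta-\eta-\mu)}u$ gives precisely the first claimed bound. The second estimate I would obtain in exactly the same fashion, now with $v(x)=x^{-\sigma\eta}(b^\sigma-x^\sigma)^{-\alpha}u(x)$ and the factor $x^{\sigma\eta}(b^\sigma-x^\sigma)^\alpha$ from the second identity in \cref{Representation}; the analogous exponent bookkeeping, using $w^\sigma_2(x)=x^{\sigma(\beta-2\eta+1)-1}(b^\sigma-x^\sigma)^{-\alpha}$, again collapses $\left(x^{\sigma\eta}(b^\sigma-x^\sigma)^\alpha\right)^2 w^\sigma_2(x)$ to $w^{(\alpha,\beta,\sigma)}(x)$. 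I do not anticipate a genuine obstacle here, since the whole argument is driven by \cref{Representation}; the only place demanding care is the exponent arithmetic establishing the two isometries, where one must track both the power of $x$ and the power of $(b^\sigma-x^\sigma)$ so that the weighting factors cancel exactly rather than merely up to a bounded constant.
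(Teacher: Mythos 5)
Your proposal is correct and follows exactly the paper's own route: the paper's proof is precisely the combination of \cref{Representation} with \cref{ErrorBounds}, and your exponent bookkeeping (showing $x^{2\sigma(\beta-\eta-\mu)}w^\sigma_1=w^{(\alpha,\beta,\sigma)}$ and $x^{2\sigma\eta}(b^\sigma-x^\sigma)^{2\alpha}w^\sigma_2=w^{(\alpha,\beta,\sigma)}$) simply makes explicit the isometry that the paper leaves to the reader.
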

		\begin{proof} The proofs can be easily concluded from \cref{Representation} and \cref{ErrorBounds}.
		\end{proof}
	\begin{theorem}\label{NonClasIntError}
		Let $\alpha,\beta>-1$. For $\left(x^{-\sigma(\beta-\eta-\mu)}u\right) \in {\bf B}^{m}_{\alpha,\beta,\sigma}(\Lambda)$, we have:
		\begin{equation}
		\|\mathcal{I}_{w^\sigma_1,N}^{(\alpha,\beta,\mu,\sigma,\eta)}u-u\|_{w^\sigma_1}\leq c N^{-m}\|D_y^m\left(x^{-\sigma(\beta-\eta-\mu)}u\right)\|_{w^{(\alpha+m,\beta+m,\sigma)}},
		\end{equation}
		and for $\left(x^{-\sigma\eta}(b^\sigma-x^\sigma)^{-\alpha}u\right)\in {\bf B}^{m}_{\alpha,\beta,\sigma}(\Lambda)$, we also have:
		\begin{equation}
		\|\mathcal{I}_{w^\sigma_2,N}^{(\alpha,\beta,\mu,\sigma,\eta)}u-u\|_{w^\sigma_2}\leq c N^{-m}\|D_y^m\left(x^{-\sigma\eta}(b^\sigma-x^\sigma)^{-\alpha}u\right)\|_{w^{(\alpha+m,\beta+m,\sigma)}}.
		\end{equation}
	\end{theorem}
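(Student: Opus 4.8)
The plan is to reduce both estimates to the mapped-Jacobi interpolation error already established in \cref{ErrInterpol}, exactly in the spirit of the stability argument for \cref{NewIntStability}. First I would set $v(x):=x^{-\sigma(\beta-\eta-\mu)}u(x)$, so that $u=x^{\sigma(\beta-\eta-\mu)}v$, and invoke the representation \eqref{NonClasMunJacInt1} from \cref{IntRepresentation}. Because the interpolation operator acts only on the mapped-polynomial factor, the error peels off the prefactor cleanly:
\[
\mathcal{I}_{w^\sigma_1,N}^{(\alpha,\beta,\mu,\sigma,\eta)}u - u = x^{\sigma(\beta-\eta-\mu)}\bigl(\mathcal{I}_{w^\sigma,N}v - v\bigr).
\]

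The crucial step is then a weight-matching identity. Forming the $w^\sigma_1$-norm of the left-hand side, the squared prefactor $x^{2\sigma(\beta-\eta-\mu)}$ multiplies $w^\sigma_1(x)=x^{\sigma-1}w_1^{(\alpha,\beta,\mu,\sigma,\eta)}(x)$, with $w_1^{(\alpha,\beta,\mu,\sigma,\eta)}$ given in \eqref{Wei_1}. A direct bookkeeping of the exponents collapses the three $\sigma$-dependent powers of $x$ into a single one, yielding
\[
x^{2\sigma(\beta-\eta-\mu)}\,w^\sigma_1(x)=x^{\sigma(\beta+1)-1}(b^\sigma-x^\sigma)^\alpha=w^{(\alpha,\beta,\sigma)}(x).
\]
Hence the pullback is norm-preserving and
\[
\|\mathcal{I}_{w^\sigma_1,N}^{(\alpha,\beta,\mu,\sigma,\eta)}u-u\|_{w^\sigma_1}=\|\mathcal{I}_{w^\sigma,N}v-v\|_{w^{(\alpha,\beta,\sigma)}}.
\]

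With this identity in hand, I would apply the first bound of \cref{ErrInterpol} to $v$ and substitute back $v=x^{-\sigma(\beta-\eta-\mu)}u$, which produces the first claimed estimate. The second estimate follows along identical lines: set $v(x):=x^{-\sigma\eta}(b^\sigma-x^\sigma)^{-\alpha}u(x)$, use the representation \eqref{NonClasMunJacInt2}, and verify the analogous identity $x^{2\sigma\eta}(b^\sigma-x^\sigma)^{2\alpha}\,w^\sigma_2(x)=w^{(\alpha,\beta,\sigma)}(x)$ from \eqref{Wei_2}, before invoking \cref{ErrInterpol} once more.

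The only point demanding genuine care is the weight-matching identity; everything else is a formal consequence of the representation formulas of \cref{IntRepresentation} together with \cref{ErrInterpol}. The exact algebraic cancellation---so that multiplication by $x^{-\sigma(\beta-\eta-\mu)}$ carries the $w^\sigma_1$-norm \emph{isometrically} onto the $w^{(\alpha,\beta,\sigma)}$-norm rather than merely comparably---is what keeps the constant $c$ identical to that of \cref{ErrInterpol}, with no extra factor incurred in the reduction.
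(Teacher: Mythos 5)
Your proof is correct, and it takes a genuinely more direct route than the paper's. The paper obtains \cref{NonClasIntError} by combining \cref{IntRepresentation} with \cref{NewIntStability} and \cref{NonClasErrorBounds}, i.e.\ it re-runs the standard ``stability plus best approximation'' pattern at the level of the weighted spaces: insert the projection ${}^1\pi^{(\alpha,\beta,\mu,\sigma,\eta)}_N u$, control the interpolated remainder by the non-classical stability theorem, and control the projection error by the non-classical analogue of \cref{ErrorBounds}. You instead collapse everything into a single isometric reduction: the representation \eqref{NonClasMunJacInt1} gives $\mathcal{I}_{w^\sigma_1,N}^{(\alpha,\beta,\mu,\sigma,\eta)}u-u=x^{\sigma(\beta-\eta-\mu)}\left(\mathcal{I}_{w^\sigma,N}v-v\right)$ with $v=x^{-\sigma(\beta-\eta-\mu)}u$, and your weight-matching computation is exact, since $x^{2\sigma(\beta-\eta-\mu)}\cdot x^{\sigma(2(\eta+\mu)-\beta)+\sigma-1}(b^\sigma-x^\sigma)^{\alpha}=x^{\sigma(\beta+1)-1}(b^\sigma-x^\sigma)^{\alpha}=w^{(\alpha,\beta,\sigma)}(x)$, and analogously $x^{2\sigma\eta}(b^\sigma-x^\sigma)^{2\alpha}\,w^\sigma_2(x)=w^{(\alpha,\beta,\sigma)}(x)$; so the claim is literally \cref{ErrInterpol} applied to $v$, whose membership in ${\bf B}^{m}_{\alpha,\beta,\sigma}(\Lambda)$ is precisely the stated hypothesis. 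The two arguments share the same engine---the isometric transfer you make explicit is exactly what powers the paper's proof of \cref{NewIntStability}---but yours buys economy and transparency: it bypasses \cref{NewIntStability} and \cref{NonClasErrorBounds} entirely, avoids the implicit triangle inequality, and shows the constant $c$ is inherited unchanged from \cref{ErrInterpol}; the paper's route, in exchange, reuses the intermediate weighted-space theorems it had already established, which is natural for exposition but strictly longer as a logical path.
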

	\begin{proof} The use of \cref{IntRepresentation} together with \cref{NewIntStability} and \cref{NonClasErrorBounds} conclude the proofs.
	\end{proof}
The first and most important step to establish the pseudo spectral methods for  fractional ordinary and partial differential equations is to derive the fractional differentiation matrices. So, our target in the next subsection is to provide these matrices.
\subsection{The left- and right- sided EK fractional differentiation matrices}
Let  $x_r^{(\alpha,\beta,\sigma)}$ for $ r=0,1,\cdots,N$ be the nodes defined in \eqref{MuntsQuadNodWei}. Then for $\mathcal{I}_{w^\sigma_1,N}^{(\alpha,\beta,\mu,\sigma,\eta)}v\in\Bbb{P}_{N}^{(\beta,\mu,\sigma,\eta)}$ and $\mathcal{I}_{w^\sigma_2,N}^{(\alpha,\beta,\mu,\sigma,\eta)}u\in\Bbb{P}_{N}^{(\alpha,\sigma,\eta)}$,  we have the following nodal expansions:
\begin{eqnarray}
&& \mathcal{I}_{w^\sigma_1,N}^{(\alpha,\beta,\mu,\sigma,\eta)}\ {}v=\sum_{k=0}^{N}v\left(x_k^{(\alpha,\beta,\sigma)}\right){}^1L^{(\beta,\mu,\sigma,\eta)}_k(x),\ \label{LeftNonLag}
\\
&&\mathcal{I}_{w^\sigma_2,N}^{(\alpha,\beta,\mu,\sigma,\eta)}\ {}u=\sum_{k=0}^{N}u\left(x_k^{(\alpha,\beta,\sigma)}\right){}^2L^{(\alpha,\sigma,\eta)}_k(x),\label{RightNonLag}
\end{eqnarray}
respectively, where ${}^1L^{(\beta,\mu,\sigma,\eta)}_r(x)$	and ${}^2L^{(\alpha,\sigma,\eta)}_r(x)$ are defined in \eqref{NonLag}. In the next theorem the left- and right- sided EK fractional differentiation matrices will be obtained.
\begin{theorem}\label{LSEKFDMs}
Let  $x_r^{(\alpha,\beta,\sigma)}$ and $w_r^{(\alpha,\beta,\sigma)}$ with $ r=0,1,\cdots,N$ be the nodes and weights defined in
 \eqref{MuntsQuadNodWei}.  Then the left-sided EK fractional differentiation matrix of order $\mu$ is denoted by ${}^L{\mathcal{{\bf D}}}^\mu$ and ${}^L{\mathcal{{\bf D}}}^\mu=({}^ld_{s,i}),\ s,i=0,1,\cdots,N$, where the elements are given as follows:
 \begin{equation}\label{DifMatEq6}
 {}^ld_{s,i}=\left(\frac{1}{x_i^{(\alpha,\beta,\sigma)}}\right)^{\sigma(\beta-\eta-\mu)} \left(\sum_{j=0}^{N}a_j^i\ \frac{\Gamma(j+\beta+1)}{\Gamma(j+\beta-\mu+1)} {}^1\mathcal{J}^{(\alpha+\mu,\beta-\mu,\mu,\sigma,\eta-\mu)}_j(x_s^{(\alpha,\beta,\sigma)})\right),
 \end{equation}
 and 
 \begin{equation}
a_j^i =\frac{ w_i^{(\alpha,\beta,\sigma)}}{{}^*\gamma_j^{(\alpha,\beta)}}P_j^{(\alpha,\beta)}\left(2\left(\frac{x_i^{(\alpha,\beta,\sigma)}}{b}\right)^\sigma-1\right),
 \end{equation}
  where $\displaystyle {}^*\gamma_r^{(\alpha,\beta)}=\frac{1}{\sigma}\left(\frac{b^\sigma}{2}\right)^{\alpha+\beta+1}\gamma_r^{(\alpha,\beta)}$ and $\gamma_r^{(\alpha,\beta)}$ is defined in \eqref{Orthog_Cons}.
\begin{proof}
	We only derive the left-sided differentiation matrix, the same fashion can be applied for the right-sided differentiation matrix. Let $\mathcal{I}_{w^\sigma_1,N}^{(\alpha,\beta,\mu,\sigma,\eta)}v\in\Bbb{P}_{N}^{(\beta,\mu,\sigma,\eta)}$.  Then we can expand $\mathcal{I}_{w_1^\sigma,N}v$ in terms of the LMFs-1 based on the nodes $x_r^{(\alpha,\beta,\sigma)},\ r=0,1,\cdots,N$  which is defined in \eqref{MuntsQuadNodWei} in the following form:
	\begin{equation}\label{DifMat1}
		\mathcal{I}_{w^\sigma_1,N}^{(\alpha,\beta,\mu,\sigma,\eta)}v=\sum_{i=0}^{N}v\left(x_i^{(\alpha,\beta,\sigma)}\right){}^1L^{(\beta,\mu,\sigma,\eta)}_i(x)=\sum_{i=0}^{N}v\left(x_i^{(\alpha,\beta,\sigma)}\right)\left(\frac{x}{x_i^{(\alpha,\beta,\sigma)}}\right)^{\sigma(\beta-\eta-\mu)}h^{\sigma}_i(x).
	\end{equation}
	In order to compute the left-sided EK fractional derivatives of the above equation \eqref{DifMat1}, we first need to expand function $h^{\sigma}_i(x)$ (which are based on the nodes $x_r^{(\alpha,\beta,\sigma)}$) in terms of $P_j^{(\alpha,\beta)}\left(2\left(\frac{x}{b}\right)^\sigma-1\right),\ j=0,1,\cdots,N$ in the following manner:
	\begin{equation}\label{DifMatEq1}
h^{\sigma}_i(x)=\sum_{j=0}^{N}a_j^i P_j^{(\alpha,\beta)}\left(2\left(\frac{x}{b}\right)^\sigma-1\right).
	\end{equation}
Multiplying both sides of the above relation by:
 \[(b^\sigma-x^\sigma)^\alpha x^{\sigma(\beta+1)-1}P_r^{(\alpha,\beta)}\left(2\left(\frac{x}{b}\right)^\sigma-1\right),
 \]
  and then integrating on $[0,b]$ together with the use of the orthogonality and the quadrature rules \eqref{MuntsQuad}, we arrive at: 
  \begin{equation}\label{DifMatEq2}
	  \int_{0}^{b}h^{\sigma}_i(x)(b^\sigma-x^\sigma)^\alpha x^{\sigma(\beta+1)-1}P_r^{(\alpha,\beta)}\left(2\left(\frac{x}{b}\right)^\sigma-1\right)\,dx=a_r^i {}^*\gamma_r^{(\alpha,\beta)}.
  \end{equation} 
  
  Now, the use of the quadrature formula \eqref{MuntsQuad} for the left side of the above relation together with the fact that $h^{\sigma}_i\left(x_i^{(\alpha,\beta,\sigma)}\right)=1$ yield:
  \begin{equation}\label{DifMatEq3}
w_i^{(\alpha,\beta,\sigma)}P_r^{(\alpha,\beta)}\left(2\left(\frac{x_i^{(\alpha,\beta,\sigma)}}{b}\right)^\sigma-1\right)=a_r^i {}^*\gamma_r^{(\alpha,\beta)},
  \end{equation}
  where $w_i^{(\alpha,\beta,\sigma)}$ and $x_i^{(\alpha,\beta,\sigma)}$ are defined in \eqref{MuntsQuadNodWei}. Plugging \eqref{DifMatEq3} into \eqref{DifMatEq1} concludes:
  \begin{equation}\label{DifMatEq4}
  \mathcal{I}_{w^\sigma_1,N}^{(\alpha,\beta,\mu,\sigma,\eta)}v=\sum_{i=0}^{N}v\left(x_i^{(\alpha,\beta,\sigma)}\right)\left(\frac{1}{x_i^{(\alpha,\beta,\sigma)}}\right)^{\sigma(\beta-\eta-\mu)}\left(\sum_{j=0}^{N}a_j^i\  {}^1\mathcal{J}^{(\alpha,\beta,\mu,\sigma,\eta)}_j(x)\right).
  \end{equation}
  
 Now, by taking the left-sided EK fractional derivative of both sides of the above equation using \cref{EKFD} and then collocating at $x_s^{(\alpha,\beta,\sigma)},\ s=0,1,\cdots,N$, we arrive at:
  \begin{equation}\label{DifMatEq5}
 {}_{0}D_{x,\sigma,\eta}^{\mu}\left( \mathcal{I}_{w^\sigma_1,N}^{(\alpha,\beta,\mu,\sigma,\eta)}v\left(x_s^{(\alpha,\beta,\sigma)}\right)\right)=\sum_{i=0}^{N}{}^ld_{s,i}\ v\left(x_i^{(\alpha,\beta,\sigma)}\right),
  \end{equation}
  where for $\ i,s=0,1,\cdots,N$, we have:
\begin{equation*}
{}^ld_{s,i}=\left(\frac{1}{x_i^{(\alpha,\beta,\sigma)}}\right)^{\sigma(\beta-\eta-\mu)} \left(\sum_{j=0}^{N}a_j^i\ \frac{\Gamma(j+\beta+1)}{\Gamma(j+\beta-\mu+1)} {}^1\mathcal{J}^{(\alpha+\mu,\beta-\mu,\mu,\sigma,\eta-\mu)}_j(x_s^{(\alpha,\beta,\sigma)})\right),
\end{equation*}
where $a_j^i$ are defined in \eqref{DifMatEq3}.
\end{proof}
\end{theorem}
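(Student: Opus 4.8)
The plan is to exploit the fact that, although the left--sided EK fractional derivative has no convenient closed form on the cardinal factors $h^\sigma_i(x)$, its action on the \emph{modal} Jacobi--M\"untz basis is explicit: by \cref{EKFD} (valid for $\mu>1$ as well via \cref{ExtendedEKFD}) one has ${}_{0}D_{x,\sigma,\eta}^{\mu}[{}^1\mathcal{J}^{(\alpha,\beta,\mu,\sigma,\eta)}_j(x)]=\frac{\Gamma(j+\beta+1)}{\Gamma(j+\beta-\mu+1)}{}^1\mathcal{J}^{(\alpha+\mu,\beta-\mu,\mu,\sigma,\eta-\mu)}_j(x)$. Accordingly, I would first write the interpolant in nodal form \eqref{LeftNonLag}, substitute the definition \eqref{NonLag} of ${}^1L^{(\beta,\mu,\sigma,\eta)}_i(x)=(x/x_i)^{\sigma(\beta-\eta-\mu)}h^\sigma_i(x)$, and then convert the only non--modal ingredient, namely the cardinal factor $h^\sigma_i(x)$, into the mapped--Jacobi polynomials $P^{(\alpha,\beta)}_j(2(x/b)^\sigma-1)$.

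The crux is the change of basis $h^\sigma_i(x)=\sum_{j=0}^{N}a^i_j P^{(\alpha,\beta)}_j(2(x/b)^\sigma-1)$, which is legitimate because both $h^\sigma_i\in\Bbb{P}^{(\sigma)}_N$ and the mapped--Jacobi polynomials of index $0\le j\le N$ span $\Bbb{P}^{(\sigma)}_N$. To extract $a^i_r$ I would multiply by $(b^\sigma-x^\sigma)^\alpha x^{\sigma(\beta+1)-1}P^{(\alpha,\beta)}_r(2(x/b)^\sigma-1)$ and integrate over $[0,b]$; the mapped--Jacobi orthogonality (with constant ${}^*\gamma_r^{(\alpha,\beta)}$) collapses the right--hand side to $a^i_r\,{}^*\gamma_r^{(\alpha,\beta)}$. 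For the left--hand side the integrand lies in $\Bbb{P}^{(\sigma)}_{2N}\subset\Bbb{P}^{(\sigma)}_{2N+1}$, so the Gauss--Jacobi--M\"untz rule \eqref{MuntsQuad} is exact; combined with the Kronecker property $h^\sigma_i(x^{(\alpha,\beta,\sigma)}_j)=\delta_{ij}$, only the node $j=i$ survives, leaving $a^i_r=\frac{w_i^{(\alpha,\beta,\sigma)}}{{}^*\gamma_r^{(\alpha,\beta)}}P^{(\alpha,\beta)}_r(2(x^{(\alpha,\beta,\sigma)}_i/b)^\sigma-1)$, precisely the stated coefficients.

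With the coefficients in hand the remainder is bookkeeping: since $x^{\sigma(\beta-\eta-\mu)}P^{(\alpha,\beta)}_j(2(x/b)^\sigma-1)={}^1\mathcal{J}^{(\alpha,\beta,\mu,\sigma,\eta)}_j(x)$ by \cref{JacMunFuns}, the prefactor $x^{\sigma(\beta-\eta-\mu)}$ absorbs into each mapped--Jacobi polynomial, turning the interpolant into $\sum_i v(x^{(\alpha,\beta,\sigma)}_i)(1/x^{(\alpha,\beta,\sigma)}_i)^{\sigma(\beta-\eta-\mu)}\sum_j a^i_j\,{}^1\mathcal{J}^{(\alpha,\beta,\mu,\sigma,\eta)}_j(x)$. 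Applying ${}_{0}D_{x,\sigma,\eta}^{\mu}$ term by term through the displayed derivative formula and then collocating at $x=x^{(\alpha,\beta,\sigma)}_s$ reads off the entries ${}^ld_{s,i}$ in the claimed form. I expect the only genuinely delicate points to be verifying the exactness hypothesis of the quadrature (confirming the product indeed lands in $\Bbb{P}^{(\sigma)}_{2N+1}$) and tracking the parameter shift $(\alpha,\beta,\eta)\mapsto(\alpha+\mu,\beta-\mu,\eta-\mu)$ consistently through the differentiation; everything else is direct substitution.
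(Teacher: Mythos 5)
Your proposal is correct and follows essentially the same route as the paper's own proof: nodal expansion in the LMFs-1, change of basis $h^{\sigma}_i(x)=\sum_j a^i_j P^{(\alpha,\beta)}_j\left(2\left(x/b\right)^\sigma-1\right)$, extraction of $a^i_r$ via orthogonality plus exactness of the quadrature \eqref{MuntsQuad} and the Kronecker property, absorption of the prefactor into the ${}^1\mathcal{J}$ functions, and finally term-by-term application of \cref{EKFD} followed by collocation. The only difference is that you make explicit two points the paper leaves implicit (that the mapped-Jacobi polynomials span $\Bbb{P}^{(\sigma)}_N$ and that the integrand lies in $\Bbb{P}^{(\sigma)}_{2N}\subset\Bbb{P}^{(\sigma)}_{2N+1}$), which is a welcome clarification rather than a departure.
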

\begin{theorem}\label{RSEKFDMs}
Let  $x_r^{(\alpha,\beta,\sigma)}$ and $w_r^{(\alpha,\beta,\sigma)}$ with $ r=0,1,\cdots,N$ be the nodes and weights defined in
	\eqref{MuntsQuadNodWei}.  Then the right-sided EK fractional differentiation matrix of order $\mu$ is denoted by ${}^R{\mathcal{{\bf D}}}^\mu$ and ${}^R{\mathcal{{\bf D}}}^\mu=({}^rd_{s,i}),\ s,i=0,1,\cdots,N$, where the elements are given as follows:
	\begin{equation}\label{DifMatEq61}
	{}^rd_{s,i}=\left(\frac{1}{x_i^{(\alpha,\beta,\sigma)}}\right)^{\sigma\eta} \left(\frac{1}{b^\sigma -\left(x_i^{(\alpha,\beta,\sigma)}\right)^\sigma}\right)^{\alpha} \left(\sum_{j=0}^{N}a_j^i\ \frac{\Gamma(j+\alpha+1)}{\Gamma(j+\alpha-\mu+1)} {}^2\mathcal{J}^{(\alpha-\mu,\beta+\mu,\mu,\sigma,\eta+\mu)}_j(x_s^{(\alpha,\beta,\sigma)})\right)
	\end{equation}
	and 
	\begin{equation}
	a_j^i =\frac{ w_i^{(\alpha,\beta,\sigma)}}{{}^*\gamma_j^{(\alpha,\beta)}}P_j^{(\alpha,\beta)}\left(2\left(\frac{x_i^{(\alpha,\beta,\sigma)}}{b}\right)^\sigma-1\right),
	\end{equation}
	where $\displaystyle {}^*\gamma_r^{(\alpha,\beta)}=\frac{1}{\sigma}\left(\frac{b^\sigma}{2}\right)^{\alpha+\beta+1}\gamma_r^{(\alpha,\beta)}$ and $\gamma_r^{(\alpha,\beta)}$ is defined in \eqref{Orthog_Cons}.
	\begin{proof}
		The proof is fairly similar to the proof of the previous theorem.
	\end{proof}
\end{theorem}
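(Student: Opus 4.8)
The plan is to mirror the derivation of the left-sided differentiation matrix in \cref{LSEKFDMs}, replacing the left-sided Erd\'{e}lyi-Kober operator with the right-sided one and the LMFs-1 with the LMFs-2. First I would start from the nodal expansion \eqref{RightNonLag}, namely
\[
\mathcal{I}_{w^\sigma_2,N}^{(\alpha,\beta,\sigma,\eta)}u=\sum_{i=0}^{N}u\left(x_i^{(\alpha,\beta,\sigma)}\right)\left(\frac{x}{x_i^{(\alpha,\beta,\sigma)}}\right)^{\sigma\eta}\left(\frac{b^\sigma-x^\sigma}{b^\sigma-\left(x_i^{(\alpha,\beta,\sigma)}\right)^\sigma}\right)^{\alpha}h^{\sigma}_i(x),
\]
and use the same Jacobi expansion \eqref{DifMatEq1} of the cardinal factor $h^{\sigma}_i(x)$ in terms of the shifted Jacobi polynomials $P_j^{(\alpha,\beta)}\!\left(2(x/b)^\sigma-1\right)$. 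Since the orthogonality relation \eqref{MuntsQuad} and the quadrature rule used to extract the coefficients $a_j^i$ involve the same weight $w^{(\alpha,\beta,\sigma)}(x)$ and the same nodes, the formula $a_j^i=\frac{w_i^{(\alpha,\beta,\sigma)}}{{}^*\gamma_j^{(\alpha,\beta)}}P_j^{(\alpha,\beta)}\!\left(2(x_i^{(\alpha,\beta,\sigma)}/b)^\sigma-1\right)$ carries over verbatim; this step is identical to \eqref{DifMatEq2}--\eqref{DifMatEq3} and requires no change.

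Next I would rewrite the expansion in terms of the JMFs-2. Recognizing that $x^{\sigma\eta}(b^\sigma-x^\sigma)^\alpha P_j^{(\alpha,\beta)}\!\left(2(x/b)^\sigma-1\right)={}^2\mathcal{J}^{(\alpha,\beta,\sigma,\eta)}_j(x)$ by \cref{JacMunFuns}, the interpolant becomes
\[
\mathcal{I}_{w^\sigma_2,N}^{(\alpha,\beta,\sigma,\eta)}u=\sum_{i=0}^{N}u\left(x_i^{(\alpha,\beta,\sigma)}\right)\left(\frac{1}{x_i^{(\alpha,\beta,\sigma)}}\right)^{\sigma\eta}\left(\frac{1}{b^\sigma-\left(x_i^{(\alpha,\beta,\sigma)}\right)^\sigma}\right)^{\alpha}\left(\sum_{j=0}^{N}a_j^i\ {}^2\mathcal{J}^{(\alpha,\beta,\sigma,\eta)}_j(x)\right),
\]
which is the right-sided analogue of \eqref{DifMatEq4}. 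The crucial ingredient then is the action of the right-sided EK fractional derivative on each JMF-2, supplied by the second identity in \cref{EKFD} (valid for $\mu>1$ as well by \cref{ExtendedEKFD}):
\[
{}_{x}D_{b,\sigma,\eta}^{\mu}\Big[{}^2\mathcal{J}^{(\alpha,\beta,\sigma,\eta)}_j(x)\Big]=\frac{\Gamma(j+\alpha+1)}{\Gamma(j+\alpha-\mu+1)}{}^2\mathcal{J}^{(\alpha-\mu,\beta+\mu,\sigma,\eta+\mu)}_j(x).
\]
Applying this operator term by term to the inner sum and then collocating at $x_s^{(\alpha,\beta,\sigma)}$ yields exactly the claimed entries ${}^rd_{s,i}$ in \eqref{DifMatEq61}, with the constant prefactor $\big(x_i^{(\alpha,\beta,\sigma)}\big)^{-\sigma\eta}\big(b^\sigma-(x_i^{(\alpha,\beta,\sigma)})^\sigma\big)^{-\alpha}$ emerging from the cardinal normalization.

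The only genuine point requiring care — and the main obstacle — is the legitimacy of interchanging the right-sided EK derivative with the finite sum and applying it factor-wise through the prefactors $\big(x_i^{(\alpha,\beta,\sigma)}\big)^{-\sigma\eta}\big(b^\sigma-(x_i^{(\alpha,\beta,\sigma)})^\sigma\big)^{-\alpha}$. These prefactors are \emph{constants} (evaluated at the fixed nodes), so linearity of the operator makes the interchange immediate, and the derivative need only act on the JMFs-2 themselves; the parameter constraint $\alpha-\mu>-1$ from \cref{EKFD} must be assumed so that the resulting JMFs-2 remain well defined. Since every manipulation is the direct right-sided counterpart of the steps carried out in full for \cref{LSEKFDMs}, the proof reduces to substituting the right-sided differentiation formula for the left-sided one, which is precisely why the authors write that \emph{the proof is fairly similar to the proof of the previous theorem}.
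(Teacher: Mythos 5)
Your proposal is correct and is exactly what the paper intends: the paper's own ``proof'' of \cref{RSEKFDMs} is simply the remark that it mirrors \cref{LSEKFDMs}, and you have carried out precisely that mirroring --- same extraction of the coefficients $a_j^i$ via orthogonality and the quadrature rule \eqref{MuntsQuad}, reassembly into JMFs-2, and termwise application of the right-sided EK derivative from \cref{EKFD} before collocating at the nodes. Your added observations (linearity handles the constant prefactors, and the constraint $\alpha-\mu>-1$ must hold) are correct and slightly more careful than the paper itself.
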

In the following, we state another approach to compute the entries of the left- and right-sided EK fractional differentiation matrices.
 \begin{theorem}\label{StableLSEKFDMs}
 	Let  $x_r^{(\alpha,\beta,\sigma)}$  with $ r=0,1,\cdots,N$ be the nodes defined in
 	\eqref{MuntsQuadNodWei}.  Then the stable left-sided EK fractional differentiation matrix of order $\mu$ is denoted by ${}^L_S{\mathcal{{\bf D}}}^\mu$ and ${}^L_S{\mathcal{{\bf D}}}^\mu=({}^l_sd_{k,i}),\ k,i=0,1,\cdots,N$, where we have:
 	\begin{equation}
{}^L_S{\mathcal{{\bf D}}}^\mu={}^L{\mathcal{{\bf U}}}\ {}^L{\mathcal{{\bf V}}}^{-1},
 	\end{equation}
 	and the entries of matrices ${}^L{\mathcal{{\bf U}}}$ and ${}^L{\mathcal{{\bf V}}}$ are denoted by $({}^lu_{k,i})$ and $({}^lv_{k,i})$ for $k,i=0,1,\cdots,N$, respectively and also given as follows:
 	\begin{eqnarray}
 	{}^lv_{k,i}&=&{}^1\mathcal{J}^{(\alpha,\beta,\mu,\sigma,\eta)}_i\left(x_k^{(\alpha,\beta,\sigma)}\right),\label{stableLDifMat1}\\
 	 	{}^lu_{k,i}&=&
 	 \frac{\Gamma(i+\beta+1)}{\Gamma(i+\beta-\mu+1)} {}^1\mathcal{J}^{(\alpha+\mu,\beta-\mu,\mu,\sigma,\eta-\mu)}_i\left(x_k^{(\alpha,\beta,\sigma)}\right).\label{stableLDifMat2}
 	\end{eqnarray}
 \end{theorem}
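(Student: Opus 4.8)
The plan is to bypass the expansion of $h^{\sigma}_i(x)$ in Jacobi polynomials used in \cref{LSEKFDMs} and instead work directly with the \emph{modal} representation of the interpolant in the JMFs-1 basis. The matrices ${}^L{\mathcal{{\bf V}}}$ and ${}^L{\mathcal{{\bf U}}}$ are, respectively, the modal-to-nodal transition matrix of the basis $\{{}^1\mathcal{J}^{(\alpha,\beta,\mu,\sigma,\eta)}_i\}_{i=0}^N$ and the nodal evaluation of the left-sided EK fractional derivatives of that same basis; the claimed identity ${}^L_S{\mathcal{{\bf D}}}^\mu={}^L{\mathcal{{\bf U}}}\,{}^L{\mathcal{{\bf V}}}^{-1}$ then expresses that applying the derivative in the nodal picture equals converting to modal coefficients, differentiating modewise, and reading off the resulting nodal values.

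First I would write the interpolant $\mathcal{I}_{w^\sigma_1,N}^{(\alpha,\beta,\mu,\sigma,\eta)}v\in\Bbb{P}_N^{(\beta,\mu,\sigma,\eta)}$ in the modal basis,
\[
\mathcal{I}_{w^\sigma_1,N}^{(\alpha,\beta,\mu,\sigma,\eta)}v(x)=\sum_{i=0}^N \hat{c}_i\,{}^1\mathcal{J}^{(\alpha,\beta,\mu,\sigma,\eta)}_i(x),
\]
which is legitimate because $\{{}^1\mathcal{J}^{(\alpha,\beta,\mu,\sigma,\eta)}_i\}_{i=0}^N$ spans ${}^1\mathcal{F}^{(\alpha,\beta,\mu,\sigma,\eta)}_N(\Lambda)=\Bbb{P}_N^{(\beta,\mu,\sigma,\eta)}$. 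Evaluating at the nodes $x_k^{(\alpha,\beta,\sigma)}$ and using the interpolation property of \cref{NonClacLag12} gives
\[
v\left(x_k^{(\alpha,\beta,\sigma)}\right)=\sum_{i=0}^N {}^1\mathcal{J}^{(\alpha,\beta,\mu,\sigma,\eta)}_i\left(x_k^{(\alpha,\beta,\sigma)}\right)\hat{c}_i=\sum_{i=0}^N {}^lv_{k,i}\,\hat{c}_i,
\]
so that, writing $\mathbf{v}$ for the vector of nodal values $v(x_k^{(\alpha,\beta,\sigma)})$ and $\hat{\mathbf{c}}$ for the vector of modal coefficients $\hat{c}_i$, one has $\mathbf{v}={}^L{\mathcal{{\bf V}}}\,\hat{\mathbf{c}}$.

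Next I would apply the left-sided EK fractional derivative termwise. By \cref{EKFD}, extended to $\mu>1$ through \cref{ExtendedEKFD}, each mode obeys
\[
{}_0D^\mu_{x,\sigma,\eta}\left[{}^1\mathcal{J}^{(\alpha,\beta,\mu,\sigma,\eta)}_i(x)\right]=\frac{\Gamma(i+\beta+1)}{\Gamma(i+\beta-\mu+1)}\,{}^1\mathcal{J}^{(\alpha+\mu,\beta-\mu,\mu,\sigma,\eta-\mu)}_i(x),
\]
so that collocating ${}_0D^\mu_{x,\sigma,\eta}\big[\mathcal{I}_{w^\sigma_1,N}^{(\alpha,\beta,\mu,\sigma,\eta)}v\big]$ at $x_k^{(\alpha,\beta,\sigma)}$ produces exactly the entries ${}^lu_{k,i}$ of \eqref{stableLDifMat2}; hence the vector $\mathbf{d}$ of nodal derivative values satisfies $\mathbf{d}={}^L{\mathcal{{\bf U}}}\,\hat{\mathbf{c}}$. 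Eliminating $\hat{\mathbf{c}}=({}^L{\mathcal{{\bf V}}})^{-1}\mathbf{v}$ yields $\mathbf{d}={}^L{\mathcal{{\bf U}}}\,({}^L{\mathcal{{\bf V}}})^{-1}\mathbf{v}$, which identifies the stable differentiation matrix as ${}^L_S{\mathcal{{\bf D}}}^\mu={}^L{\mathcal{{\bf U}}}\,({}^L{\mathcal{{\bf V}}})^{-1}$, as claimed.

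The only step that is not purely formal is the invertibility of ${}^L{\mathcal{{\bf V}}}$, which is what makes the elimination meaningful, and I expect this to be the point requiring care. It follows from the linear independence of $\{{}^1\mathcal{J}^{(\alpha,\beta,\mu,\sigma,\eta)}_i\}_{i=0}^N$ (a consequence of \cref{Complete}) together with the distinctness of the $N+1$ nodes $x_k^{(\alpha,\beta,\sigma)}$: a nontrivial kernel vector of ${}^L{\mathcal{{\bf V}}}$ would yield a nonzero element of $\Bbb{P}_N^{(\beta,\mu,\sigma,\eta)}$ vanishing at all $N+1$ nodes; factoring out the nonvanishing weight $x^{\sigma(\beta-\eta-\mu)}$ leaves a polynomial of degree $\le N$ in the variable $2(x/b)^\sigma-1$ possessing $N+1$ distinct roots, forcing it to vanish identically and hence all $\hat{c}_i=0$. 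This guarantees unique solvability of the modal--nodal system and thus the existence of $({}^L{\mathcal{{\bf V}}})^{-1}$.
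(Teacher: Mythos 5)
Your proposal is correct and follows essentially the same route as the paper: both arguments rest on the modal derivative identity of \cref{EKFD} (extended via \cref{ExtendedEKFD}) together with the observation that each ${}^1\mathcal{J}^{(\alpha,\beta,\mu,\sigma,\eta)}_i$ and its left-sided EK derivative lie in the same finite-dimensional space, and both reduce to the matrix identity ${}^L_S{\mathcal{{\bf D}}}^\mu\,{}^L{\mathcal{{\bf V}}}={}^L{\mathcal{{\bf U}}}$ — whether one expands the modal functions in the Lagrange--M\"untz basis (as the paper does) or the nodal interpolant in the modal basis (as you do) is just dual bookkeeping of the same linear algebra. The one genuine addition in your write-up is the explicit Vandermonde-type kernel argument for the invertibility of ${}^L{\mathcal{{\bf V}}}$, a point the paper's proof silently assumes and only substantiates later through the closed-form inverse given in \cref{InversForm}.
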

 \begin{proof}

 	Due to the fact that ${}^1\mathcal{J}^{(\alpha,\beta,\mu,\sigma,\eta)}_i\left(x\right),\ i=0,1,\cdots,N$ and its left-sided EK fractional derivative of order $\mu$ of them belong to the space $ {}^1\mathcal{F}^{(\alpha,\beta,\mu,\sigma,\eta)}_N(\Lambda)$, we can immediately write:
 	 	\begin{equation}\label{StabDifMat1}
 	 	{}^1\mathcal{J}^{(\alpha,\beta,\mu,\sigma,\eta)}_i\left(x\right)=\sum_{k=0}^{N}{}^lv_{k,i}{}^1L^{(\beta,\mu,\sigma,\eta)}_k(x),
 		\end{equation}
 		and
 	\begin{equation}\label{StabDifMat2}
 {}_{0}D_{x,\sigma,\eta}^{\mu}\left({}^1\mathcal{J}^{(\alpha,\beta,\mu,\sigma,\eta)}_i\left(x\right)\right)=\sum_{k=0}^{N}{}^lu_{k,i}{}^1L^{(\beta,\mu,\sigma,\eta)}_k (x).
 	\end{equation}
Taking the left-sided EK fractional derivatives of order $\mu$ from both sides \eqref{StabDifMat1}, and then collocating both sides of \eqref{StabDifMat1} and \eqref{StabDifMat2} at $x=x_k^{(\alpha,\beta,\sigma)}$, we get:
	\begin{equation}
{}^L_S{\mathcal{{\bf D}}}^\mu\ {}^L{\bf V}={}^L{\bf U}.
	\end{equation}
	This completes the proof.
 \end{proof}
  \begin{theorem}\label{StableRSEKFDMs}
  	Let  $x_r^{(\alpha,\beta,\sigma)}$ with $ r=0,1,\cdots,N$ be the nodes defined in
  	\eqref{MuntsQuadNodWei}.  Then the stable right-sided EK fractional differentiation matrix of order $\mu$ is denoted by ${}^R_S{\mathcal{{\bf D}}}^\mu$ and ${}^R_S{\mathcal{{\bf D}}}^\mu=({}^r_sd_{k,i}),\ k,i=0,1,\cdots,N$, where we have:
  	\begin{equation}
  	{}^R_S{\mathcal{{\bf D}}}^\mu={}^R{\mathcal{{\bf U}}}\ {}^R{\mathcal{{\bf V}}}^{-1},
  	\end{equation}
  	and the entries of matrices ${}^R{\mathcal{{\bf U}}}$ and ${}^R{\mathcal{{\bf V}}}$ are denoted by $({}^ru_{k,i})$ and $({}^rv_{k,i})$ for $k,i=0,1,\cdots,N$, respectively and also given as follows:
  	\begin{eqnarray}
  	{}^rv_{k,i}&=&{}^2\mathcal{J}^{(\alpha,\beta,\sigma,\eta)}_i\left(x_k^{(\alpha,\beta,\sigma)}\right),\label{stableRDifMat1}\\
  	{}^ru_{k,i}&=&
  	\frac{\Gamma(i+\alpha+1)}{\Gamma(i+\alpha-\mu+1)} {}^2\mathcal{J}^{(\alpha-\mu,\beta+\mu,\sigma,\eta+\mu)}_i\left(x_k^{(\alpha,\beta,\sigma)}\right).\label{stableRDifMat2}
  	\end{eqnarray}
  \end{theorem}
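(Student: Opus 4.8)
The plan is to mirror the argument of \cref{StableLSEKFDMs}, replacing every first-kind object by its second-kind counterpart. Two facts drive the proof: first, by \cref{EKFD} the right-sided EK fractional derivative of order $\mu$ sends a second-kind Jacobi-M\"untz function ${}^2\mathcal{J}^{(\alpha,\beta,\sigma,\eta)}_i$ to a scalar multiple of ${}^2\mathcal{J}^{(\alpha-\mu,\beta+\mu,\sigma,\eta+\mu)}_i$; second, both ${}^2\mathcal{J}^{(\alpha,\beta,\sigma,\eta)}_i(x)$ and its right-sided EK derivative lie in the finite-dimensional space ${}^2\mathcal{F}^{(\alpha,\beta,\sigma,\eta)}_N(\Lambda)$, which is spanned equally by the modal basis $\{{}^2\mathcal{J}^{(\alpha,\beta,\sigma,\eta)}_i\}_{i=0}^N$ and by the cardinal functions $\{{}^2L^{(\alpha,\sigma,\eta)}_k\}_{k=0}^N$.

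First I would expand each modal basis function and its derivative in the LMFs-2 cardinal basis:
\[
{}^2\mathcal{J}^{(\alpha,\beta,\sigma,\eta)}_i(x)=\sum_{k=0}^{N}{}^rv_{k,i}\,{}^2L^{(\alpha,\sigma,\eta)}_k(x),\qquad {}_{x}D_{b,\sigma,\eta}^{\mu}\left({}^2\mathcal{J}^{(\alpha,\beta,\sigma,\eta)}_i(x)\right)=\sum_{k=0}^{N}{}^ru_{k,i}\,{}^2L^{(\alpha,\sigma,\eta)}_k(x).
\]
Because the LMFs-2 obey the Kronecker delta property ${}^2L^{(\alpha,\sigma,\eta)}_k(x_{k'}^{(\alpha,\beta,\sigma)})=\delta_{kk'}$, collocating the first identity at the nodes $x_k^{(\alpha,\beta,\sigma)}$ immediately recovers \eqref{stableRDifMat1}. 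For the coefficients ${}^ru_{k,i}$, I would invoke \cref{EKFD} to rewrite the derivative as $\tfrac{\Gamma(i+\alpha+1)}{\Gamma(i+\alpha-\mu+1)}{}^2\mathcal{J}^{(\alpha-\mu,\beta+\mu,\sigma,\eta+\mu)}_i(x)$ and then collocate at the nodes, which yields \eqref{stableRDifMat2}.

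To assemble the matrix identity, I would take the right-sided EK fractional derivative of the cardinal expansion of the interpolant $\mathcal{I}_{w^\sigma_2,N}^{(\alpha,\beta,\sigma,\eta)}u$ and collocate both the expansion and its derivative at the nodes. Writing the modal coefficient vector of $\mathcal{I}_{w^\sigma_2,N}^{(\alpha,\beta,\sigma,\eta)}u$ as $\hat{\mathbf u}$, the nodal values satisfy $\mathbf u={}^R{\bf V}\hat{\mathbf u}$ while the nodal values of the right-sided EK derivative satisfy ${}^R{\bf U}\hat{\mathbf u}$; eliminating $\hat{\mathbf u}$ gives ${}^R_S{\mathcal{{\bf D}}}^\mu\,{}^R{\bf V}={}^R{\bf U}$, hence ${}^R_S{\mathcal{{\bf D}}}^\mu={}^R{\bf U}\,({}^R{\bf V})^{-1}$, which is the claimed formula.

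The only genuine point to verify, and the mild obstacle I would flag, is the invertibility of ${}^R{\bf V}$. I would argue that since $\{{}^2\mathcal{J}^{(\alpha,\beta,\sigma,\eta)}_i\}_{i=0}^N$ and $\{{}^2L^{(\alpha,\sigma,\eta)}_k\}_{k=0}^N$ are two bases of the same $(N+1)$-dimensional space ${}^2\mathcal{F}^{(\alpha,\beta,\sigma,\eta)}_N(\Lambda)$, the matrix ${}^R{\bf V}$ collecting the nodal values of the modal basis is exactly the change-of-basis matrix between them and is therefore nonsingular; equivalently, the unisolvence of interpolation on the distinct nodes $x_k^{(\alpha,\beta,\sigma)}$, already guaranteed by the existence of the cardinal functions LMFs-2, forces $\det {}^R{\bf V}\neq 0$ and makes the inverse in ${}^R_S{\mathcal{{\bf D}}}^\mu={}^R{\bf U}\,({}^R{\bf V})^{-1}$ well defined.
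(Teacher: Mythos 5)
Your overall strategy is the same as the paper's: the paper proves only the left-sided case (\cref{StableLSEKFDMs}) and simply declares the right-sided case ``fairly similar,'' and your proof is exactly that mirroring. Your assembly step (nodal values of the interpolant are ${}^R{\bf V}\hat{\mathbf u}$, nodal values of its right-sided EK derivative are ${}^R{\bf U}\hat{\mathbf u}$ by \cref{EKFD} and linearity, then eliminate $\hat{\mathbf u}$) is a correct derivation of ${}^R_S{\mathcal{{\bf D}}}^\mu\,{}^R{\bf V}={}^R{\bf U}$, and your invertibility remark for ${}^R{\bf V}$ (it is the change-of-basis matrix between the modal basis and the cardinal basis of the same $(N+1)$-dimensional space) is a legitimate point the paper leaves implicit, confirmed a posteriori by the explicit inverse in \cref{InversForm}.

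However, one of the two ``facts'' you say drive the proof is false, and it is precisely the point where the right-sided case differs from the left-sided one. By \cref{EKFD}, ${}_{x}D_{b,\sigma,\eta}^{\mu}\big[{}^2\mathcal{J}^{(\alpha,\beta,\sigma,\eta)}_i\big]$ is a multiple of ${}^2\mathcal{J}^{(\alpha-\mu,\beta+\mu,\sigma,\eta+\mu)}_i(x)=x^{\sigma(\eta+\mu)}\left(b^\sigma-x^\sigma\right)^{\alpha-\mu}P_i^{(\alpha-\mu,\beta+\mu)}\left(2\left(\frac{x}{b}\right)^\sigma-1\right)$, whose prefactor differs from the prefactor $x^{\sigma\eta}(b^\sigma-x^\sigma)^{\alpha}$ of ${}^2\mathcal{F}^{(\alpha,\beta,\sigma,\eta)}_N(\Lambda)$ by the non-polynomial factor $x^{\sigma\mu}(b^\sigma-x^\sigma)^{-\mu}$; the derivative therefore lies in ${}^2\mathcal{F}^{(\alpha-\mu,\beta+\mu,\sigma,\eta+\mu)}_N(\Lambda)$, \emph{not} in ${}^2\mathcal{F}^{(\alpha,\beta,\sigma,\eta)}_N(\Lambda)$. (In the left-sided case the analogous claim is true, because the exponent $\sigma(\beta-\eta-\mu)$ is invariant under the shift $(\alpha,\beta,\eta)\mapsto(\alpha+\mu,\beta-\mu,\eta-\mu)$; that cancellation is what lets the paper's proof of \cref{StableLSEKFDMs} treat both expansions as exact identities, and it has no counterpart here.) Consequently your second displayed expansion cannot be read as an identity of functions. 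The repair is cheap: either define ${}^ru_{k,i}$ directly as the nodal value of the derivative, which is exactly \eqref{stableRDifMat2}, or read that display as the LMFs-2 \emph{interpolant} of the derivative. Since your matrix assembly only ever evaluates at the nodes $x_k^{(\alpha,\beta,\sigma)}$, nothing downstream is affected and the formula ${}^R_S{\mathcal{{\bf D}}}^\mu={}^R{\mathcal{{\bf U}}}\,{}^R{\mathcal{{\bf V}}}^{-1}$ stands --- with the understanding that ${}^R_S{\mathcal{{\bf D}}}^\mu$ maps nodal values of the interpolant to nodal values of its exact right-sided EK derivative, not to coefficients of an expansion inside the original space.
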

  \begin{proof}
  	The proof is fairly similar to the proof of \cref{StableLSEKFDMs}.
  \end{proof}
 \begin{remark}
 As we have seen in \cref{StableLSEKFDMs} and \cref{StableRSEKFDMs}, in order to provide the left- and right-sided EK fractional differentiation matrices, we need to have the inversion of the dense matrices $ {}^L{\mathcal{{\bf V}}}$ and  ${}^R{\mathcal{{\bf V}}}$. Due to the fact that the direct inversion of a dense matrix is very expensive, so the closed form of the inversion of these matrices is very important  from the numerical analysis point of view. In the next theorem we provide these matrices explicitly.
 \end{remark}
 \begin{theorem}\label{InversForm}
 	Let  $x_r^{(\alpha,\beta,\sigma)}$ and $w_r^{(\alpha,\beta,\sigma)}$ with $ r=0,1,\cdots,N$ be the nodes and weights defined in \eqref{MuntsQuadNodWei}. If we denote the entries of matrices ${}^L{\mathcal{{\bf V}}}^{-1}$ and ${}^R{\mathcal{{\bf V}}}^{-1}$ by $({}^lv_{k,i}^{-1})$ and $({}^rv_{k,i}^{-1})$ for $k,i=0,1,\cdots,N$, respectively then we have: 
 	\begin{eqnarray*}
 	{}^lv_{k,i}^{-1}&=&\left(x_i^{(\alpha,\beta,\sigma)}\right)^{-\sigma(\beta-\eta-\mu)}\frac{ w_i^{(\alpha,\beta,\sigma)}}{{}^*\gamma_k^{(\alpha,\beta)}}P_k^{(\alpha,\beta)}\left(2\left(\frac{x_i^{(\alpha,\beta,\sigma)}}{b}\right)^\sigma-1\right),\label{stableInvLDifMat1}\\
 	{}^rv_{k,i}^{-1}&=&\left(x_i^{(\alpha,\beta,\sigma)}\right)^{-\sigma\eta}\left(b^\sigma -\left(x_i^{(\alpha,\beta,\sigma)}\right)^\sigma\right)^{-\alpha}\frac{ w_i^{(\alpha,\beta,\sigma)}}{{}^*\gamma_k^{(\alpha,\beta)}}P_k^{(\alpha,\beta)}\left(2\left(\frac{x_i^{(\alpha,\beta,\sigma)}}{b}\right)^\sigma-1\right),\label{stableInvLDifMat2}
 	\end{eqnarray*}
 		where $\displaystyle {}^*\gamma_k^{(\alpha,\beta)}=\frac{1}{\sigma}\left(\frac{b^\sigma}{2}\right)^{\alpha+\beta+1}\gamma_k^{(\alpha,\beta)}$ and $\gamma_k^{(\alpha,\beta)}$ is defined in \eqref{Orthog_Cons}.
 \begin{proof}
 				The proof is easily obtained from the orthogonality properties of  JMFs-1 and JMFs-2.
 \end{proof}
 \end{theorem}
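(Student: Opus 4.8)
The plan is to verify directly that the proposed matrix is a left inverse of ${}^L{\mathcal{{\bf V}}}$ by computing the matrix product entrywise and showing it equals the identity; the right-sided case for ${}^R{\mathcal{{\bf V}}}^{-1}$ is entirely analogous, with JMFs-2 in place of JMFs-1. Since both matrices are square of size $N+1$, exhibiting a left inverse simultaneously proves invertibility and the stated formula.

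First I would form the product $\sum_{i=0}^{N}{}^lv_{k,i}^{-1}\,{}^lv_{i,j}$. Recalling from \cref{StableLSEKFDMs} that ${}^lv_{i,j}={}^1\mathcal{J}^{(\alpha,\beta,\mu,\sigma,\eta)}_j(x_i^{(\alpha,\beta,\sigma)})=\left(x_i^{(\alpha,\beta,\sigma)}\right)^{\sigma(\beta-\eta-\mu)}P_j^{(\alpha,\beta)}\!\left(2(x_i^{(\alpha,\beta,\sigma)}/b)^\sigma-1\right)$ and substituting the proposed form of ${}^lv_{k,i}^{-1}$, the factors $\left(x_i^{(\alpha,\beta,\sigma)}\right)^{\pm\sigma(\beta-\eta-\mu)}$ cancel, leaving
\[
\sum_{i=0}^{N}{}^lv_{k,i}^{-1}\,{}^lv_{i,j}=\frac{1}{{}^*\gamma_k^{(\alpha,\beta)}}\sum_{i=0}^{N}w_i^{(\alpha,\beta,\sigma)}P_k^{(\alpha,\beta)}\!\left(2\!\left(\tfrac{x_i^{(\alpha,\beta,\sigma)}}{b}\right)^{\!\sigma}\!\!-1\right)P_j^{(\alpha,\beta)}\!\left(2\!\left(\tfrac{x_i^{(\alpha,\beta,\sigma)}}{b}\right)^{\!\sigma}\!\!-1\right).
\]

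Next I would recognise the inner sum as a Gauss-Jacobi-M\"untz quadrature sum. Each factor $P_k^{(\alpha,\beta)}(2(x/b)^\sigma-1)$ is a polynomial in $x^\sigma$ of degree $k\le N$, so the product $P_kP_j$ lies in $\Bbb{P}_{2N}^{(\sigma)}\subset\Bbb{P}_{2N+1}^{(\sigma)}$. By the exactness asserted in \cref{MintzJacQuad} for the weight $w^{(\alpha,\beta,\sigma)}(x)=x^{\sigma(\beta+1)-1}(b^\sigma-x^\sigma)^\alpha$, the sum equals $\int_0^b P_k^{(\alpha,\beta)}(2(x/b)^\sigma-1)\,P_j^{(\alpha,\beta)}(2(x/b)^\sigma-1)\,w^{(\alpha,\beta,\sigma)}(x)\,dx$. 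Finally I would evaluate this integral through the substitution $y=2(x/b)^\sigma-1$, under which one checks that $w^{(\alpha,\beta,\sigma)}(x)\,dx=\tfrac1\sigma\left(\tfrac{b^\sigma}{2}\right)^{\alpha+\beta+1}(1-y)^\alpha(1+y)^\beta\,dy$; the classical Jacobi orthogonality with constant \eqref{Orthog_Cons} then returns the value ${}^*\gamma_k^{(\alpha,\beta)}\delta_{kj}$. Dividing by ${}^*\gamma_k^{(\alpha,\beta)}$ yields $\delta_{kj}$, so ${}^L{\mathcal{{\bf V}}}^{-1}\,{}^L{\mathcal{{\bf V}}}=I$.

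There is no genuine obstacle here; the argument is a verification rather than a construction. The only points demanding care are the degree count that guarantees the quadrature is exact — which works precisely because $k,j\le N$ forces $\deg(P_kP_j)\le 2N\le 2N+1$ in the variable $x^\sigma$ — and the bookkeeping of the weight factor under the change of variables, so that the accumulated constant $\tfrac1\sigma\left(\tfrac{b^\sigma}{2}\right)^{\alpha+\beta+1}\gamma_k^{(\alpha,\beta)}$ matches the definition of ${}^*\gamma_k^{(\alpha,\beta)}$ exactly and cancels. For the right-sided claim the identical steps apply, replacing the weight $w^{(\alpha,\beta,\sigma)}$ and the Jacobi-M\"untz functions by their JMFs-2 counterparts and using the orthogonality \eqref{Prop42}, with the extra factor $\left(b^\sigma-(x_i^{(\alpha,\beta,\sigma)})^\sigma\right)^{-\alpha}\left(x_i^{(\alpha,\beta,\sigma)}\right)^{-\sigma\eta}$ cancelling against the corresponding factor in ${}^rv_{i,j}$.
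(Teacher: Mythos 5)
Your proof is correct and is essentially the paper's own argument worked out in full: the paper's one-line appeal to ``orthogonality of JMFs-1 and JMFs-2'' is precisely the discrete orthogonality you establish by cancelling the power/weight factors, invoking the exactness of the Gauss--Jacobi--M\"untz quadrature on $\Bbb{P}_{2N+1}^{(\sigma)}$, and then applying the classical Jacobi orthogonality with the constant ${}^*\gamma_k^{(\alpha,\beta)}$. The entrywise verification that the proposed matrix is a (two-sided, since square) inverse is exactly what the paper leaves implicit, so there is no gap and no genuinely different route here.
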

\section{Numerical experiments}\label{Sec:4}
This section is concerned to testify the theoretical results numerically. To do so,  we divide this section into two parts. In the first part, applications of the newly interpolants to approximate the EK fractional derivatives are given. In the second part, applications of these interpolants to solve some ordinary and fractional partial differential equations are provided. In the rest of this paper, we denote the maximum error as:
\begin{equation}
E_{\infty}(N)=\max\left|u\left(x_i^{(\alpha,\beta,\sigma)}\right)-u_N\left(x_i^{(\alpha,\beta,\sigma)}\right)\right|,\ i=0,1,\cdots,N,
\end{equation}
where $u$ and $u_N$ are an unknown function and its approximation, respectively. 
\subsection{Approximation of the EK fractional derivatives}
In this position we are going to examine the left- and right-sided EK fractional differentiation matrices obtained by two different approaches with two numerical examples. 
\begin{example}\label{Exam-1} As the first example consider $f(x)={}^1\mathcal{J}^{(\alpha,\beta,\mu,\sigma,\eta)}_k\left(x\right)$. Using \cref{EKFD} we arrive at:
	\begin{equation}
	{}_{0}D_{x,\sigma,\eta}^{\mu}\Big[f(x)\Big]=\frac{\Gamma(k+\beta+1)}{\Gamma(k+\beta-\mu+1)}{}^1\mathcal{J}^{(\alpha+\mu,\beta-\mu,\mu,\sigma,\eta-\mu)}_k(x), \ 0\leq \mu\leq1.
	\end{equation}
To have a good comparison, we approximate the left-sided EK fractional derivative of order $0\leq \mu\leq1$ of the given function $f(x)$ using two aforementioned approaches for the EK fractional differentiation matrices  stated in \cref{LSEKFDMs}  and \cref{StableLSEKFDMs} separately. The behavior of $E_\infty(N)$ for $b=10,\ k=10,\ \alpha=-0.5,\ \beta=2,\ \mu=\sigma=0.5,\ \eta=0$ versus various values of $N$ such as $N=45,\ 95,\ 145,\ 175$ are depicted in \cref{Fig-1}. As it is observed in this figure, the first approach for EK fractional differentiation matrix (see \cref{LSEKFDMs}) is worked only for $N<97$ while the second approach for EK fractional differentiation matrix (see \cref{StableLSEKFDMs}) is still worked up to $N<167$. The same results hold true when we compute the condition numbers of EK fractional differentiation matrices for the two approaches. 

It is also seen from \cref{Fig-1} that when $N$ goes to infinity then the errors of the first approach increase very fast while for the second one remained bounded.

Another important question remains to be answered is that what is the rate of growth of the condition numbers of the EK fractional differentiation matrices stated in \cref{LSEKFDMs}  and \cref{StableLSEKFDMs} as $N\longrightarrow\infty$?

The answer to the question is provided numerically in \cref{Tab-1}. In this table we compute $\displaystyle\frac{\text{Condition number of\ }{}^L{\mathcal{{\bf D}}}^\mu}{2N^{2\mu}}$ and $\displaystyle\frac{\text{Condition number of\ }{}^L_S{\mathcal{{\bf D}}}^\mu}{2N^{2\mu}}$ for some values of $N$ and $\mu$ with $b=10,\ k=10,\ \alpha=-0.5,\ \beta=2,\ \sigma=0.5,\ \eta=0$. The results indicated that the growth of the condition numbers of the EK fractional differentiation matrices behave like $\mathcal{O}\left(N^{2\mu}\right)$ as $N\longrightarrow\infty$. Our results are coincide with the results for the classical (ordinary) differentiation matrices (see \cite{MR2340254,MR972454,MR3574589}).
\begin{figure}[htbp]
	\vspace{-2.5cm}
	\centering
	\includegraphics[width=6.5cm,height=8.5cm,keepaspectratio=true]{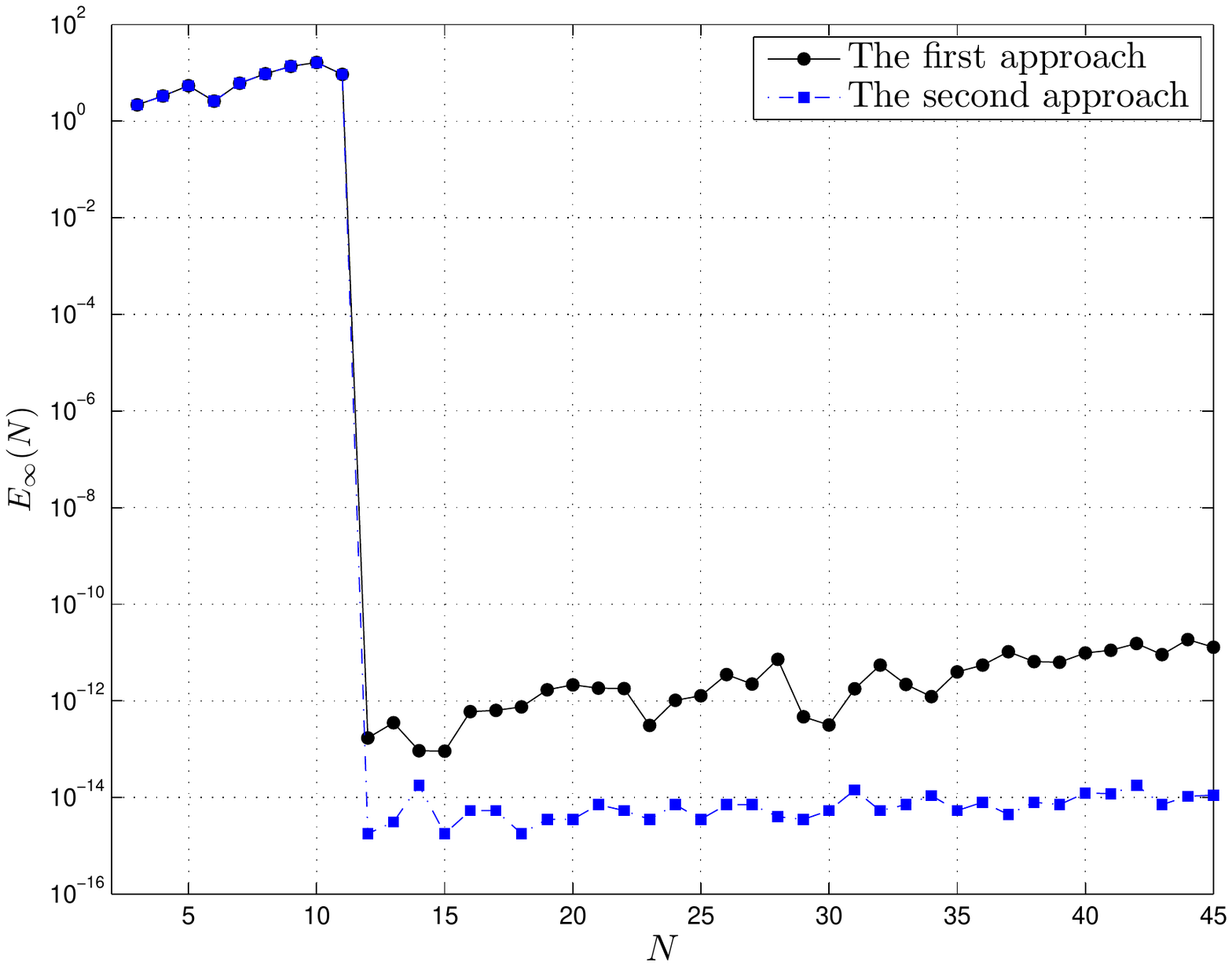}\includegraphics[width=6.5cm,height=8.5cm,keepaspectratio=true]{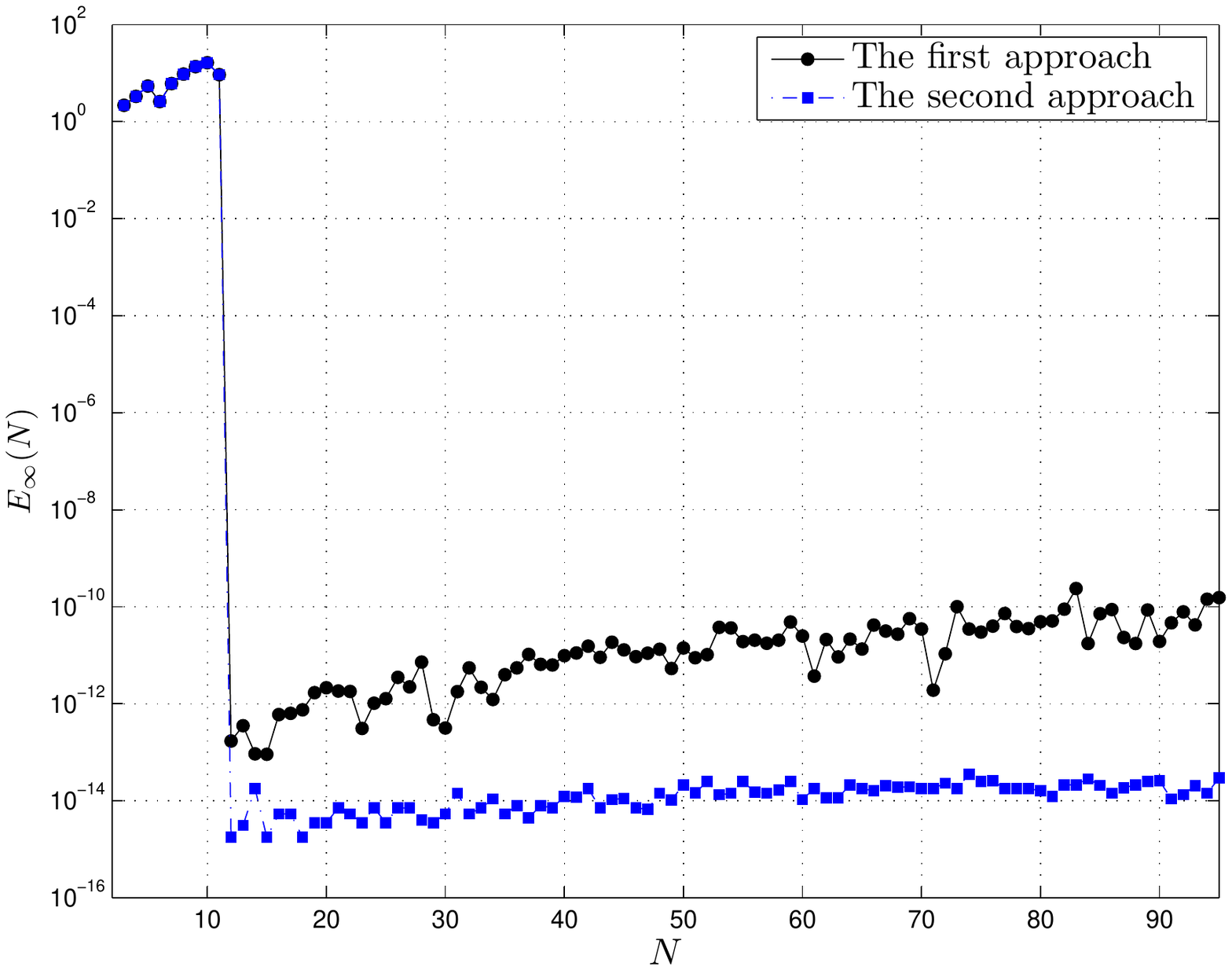}\\\vspace{-3.5cm}
	\centering
	\includegraphics[width=6.5cm,height=8.5cm,keepaspectratio=true]{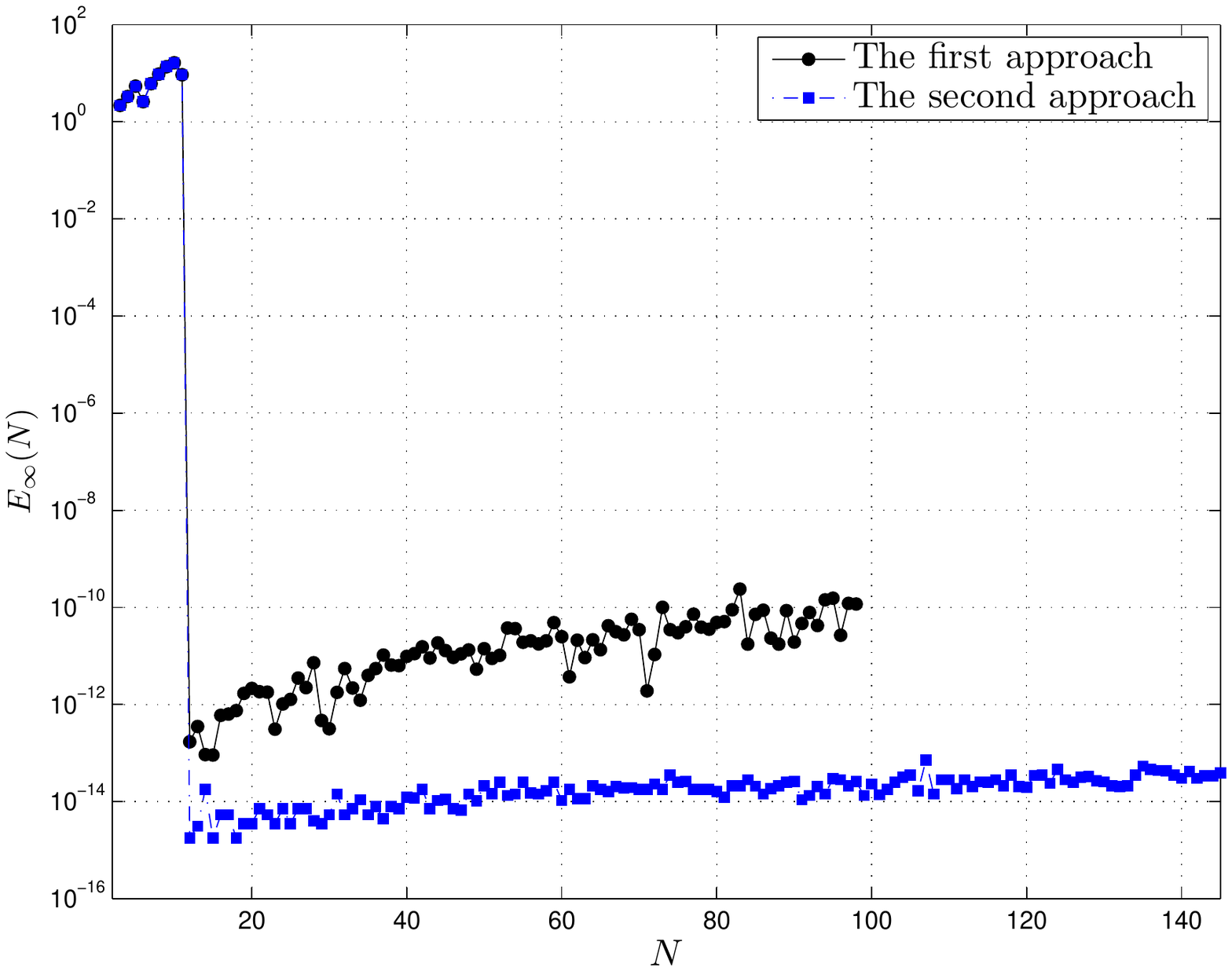}\includegraphics[width=6.5cm,height=8.5cm,keepaspectratio=true]{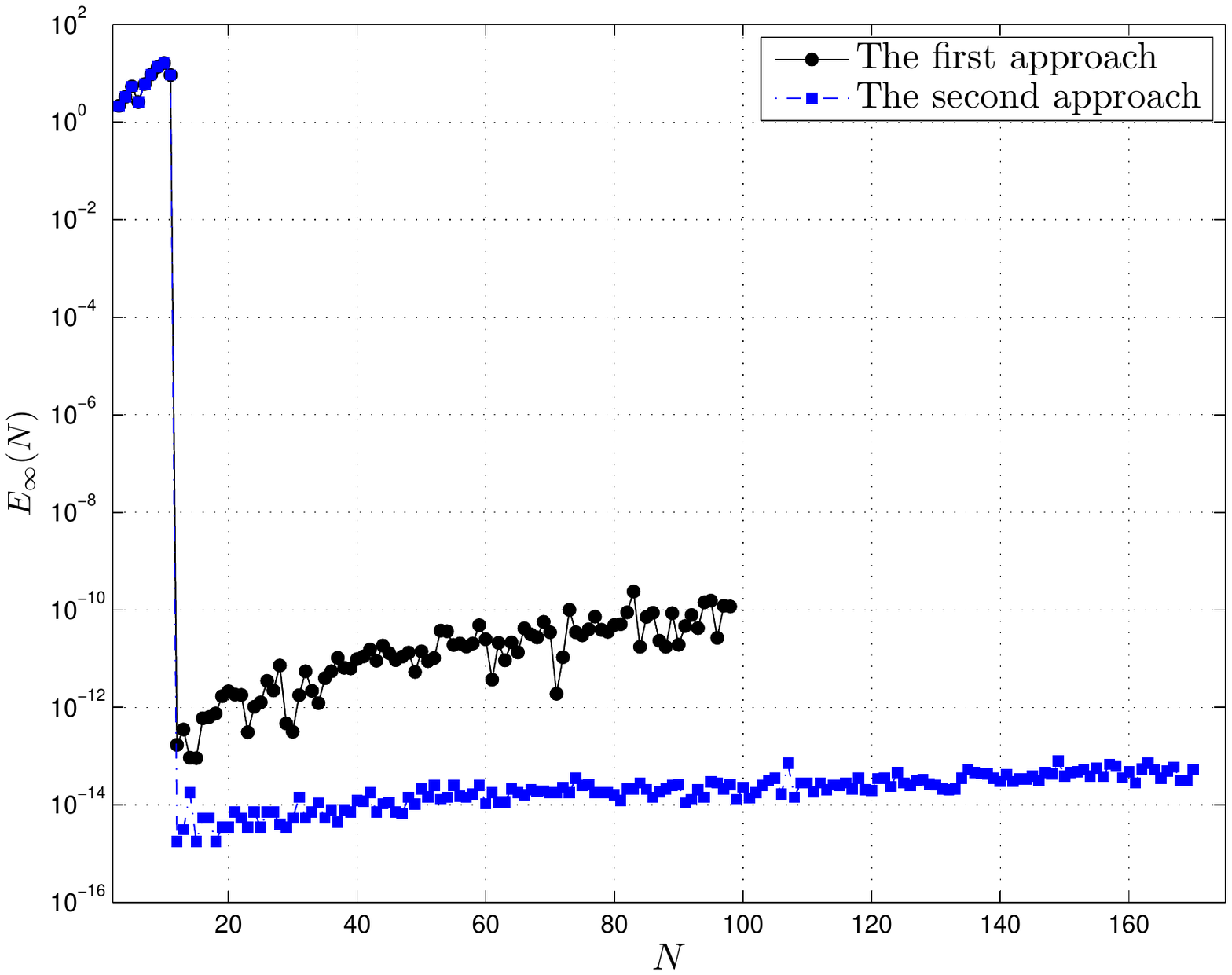}\vspace{-2.5cm}
	\caption{Comparison of the maximum errors of the EK fractional differentiation matrices of a given function $f(x)$ based on \cref{LSEKFDMs}  and \cref{StableLSEKFDMs}  for $b=10,\ k=10,\ \alpha=-0.5,\ \beta=2,\ \mu=\sigma=0.5,\ \eta=0$ versus various values of $N$ such as $N=45,\ 95,\ 145,\ 175$.}
	\label{Fig-1}
\end{figure}
\begin{table}[h] \label{Tab-1}
	\caption{The results of $\displaystyle\frac{\text{Condition number of\ }{}^L{\mathcal{{\bf D}}}^\mu}{2N^{2\mu}}$ and $\displaystyle\frac{\text{Condition number of\ }{}^L_S{\mathcal{{\bf D}}}^\mu}{2N^{2\mu}}$ for some values of $N$ and $\mu$ with $b=10,\ k=10,\ \alpha=-0.5,\ \beta=2,\ \sigma=0.5,\ \eta=0$.}  
	\centering                            
	\begin{tabular}{l c c c c }              
		\hline\hline \\ [0.5ex]
		$N$ & & $\mu$ &\multicolumn{1}{c}{The first approach} &\multicolumn{1}{c}{The second approach} 
		\\ [1ex]    
		\hline \\[0.5ex]                                      
		
		& & $0.25$ & $0.9453$ &  $0.9453$    \\[-0.25ex] 
		 \raisebox{1.5ex}{45}& & $0.5$ 
		&  $1.1183$ & $1.1183$ \\[1ex] 
	                     & & $0.75$ 
		&  $1.1487$ & $1.1487$   \\[1ex] 
		\cline{3-5}\\
		& & $0.25$ & $0.9739$ & $0.9739$    \\[-0.25ex] 
		\raisebox{1.5ex}{95}& & $0.5$ 
		&  $1.1093$ & $1.1093$ \\[1ex] 
		& & $0.75$ 
		&  $1.1274$ & $1.1274$   \\[1ex] 
		\cline{3-5}\\
	& & $0.25$ & - & $0.9888 $    \\[-0.25ex] 
		\raisebox{1.5ex}{145}& & $0.5$ 
		&  - & $1.1059$ \\[1ex] 
		& & $0.75$ 
		&  - & $1.1203$   \\[1ex] 
		\cline{3-5}\\
	& & $0.25$ & - & $0.9931$    \\[-0.25ex] 
		\raisebox{1.5ex}{165}& & $0.5$ 
		&  - & $1.1050$ \\[1ex] 
		& & $0.75$ 
		&  - & $1.1187$   \\[1ex] 
		\hline                          
	\end{tabular} 
	\label{tab:PPer} 
\end{table} 

	\end{example}
	\begin{example}\label{Exam-2}
As the second example, let $f(x)={}^2\mathcal{J}^{(\alpha,\beta,\sigma,\eta)}_k\left(x\right)$. Thanks to \cref{EKFD} we get:
\begin{equation}
{}_{x}D_{b,\sigma,\eta}^{\mu}\Big[f(x)\Big]=\frac{\Gamma(k+\alpha+1)}{\Gamma(k+\alpha-\mu+1)}{}^2\mathcal{J}^{(\alpha-\mu,\beta+\mu,\sigma,\eta+\mu)}_k(x).
\end{equation}
The behavior of $E_\infty(N)$ of the left-sided EK fractional derivative of order $0\leq \mu\leq1$ of the given function $f(x)$ with the approaches of the EK fractional differentiation matrices  stated in \cref{RSEKFDMs}  and \cref{StableRSEKFDMs} for $b=10,\ k=5,\ \alpha=0.5,\ \beta=-0.5,\ \sigma=\eta=0.5$ for various values of $N$ such as $N=45,\ 95,\ 145,\ 175$ have shown in \cref{Fig-2}.
\begin{figure}[htbp]
	\vspace{-2.5cm}
	\centering
	\includegraphics[width=6.5cm,height=8.5cm,keepaspectratio=true]{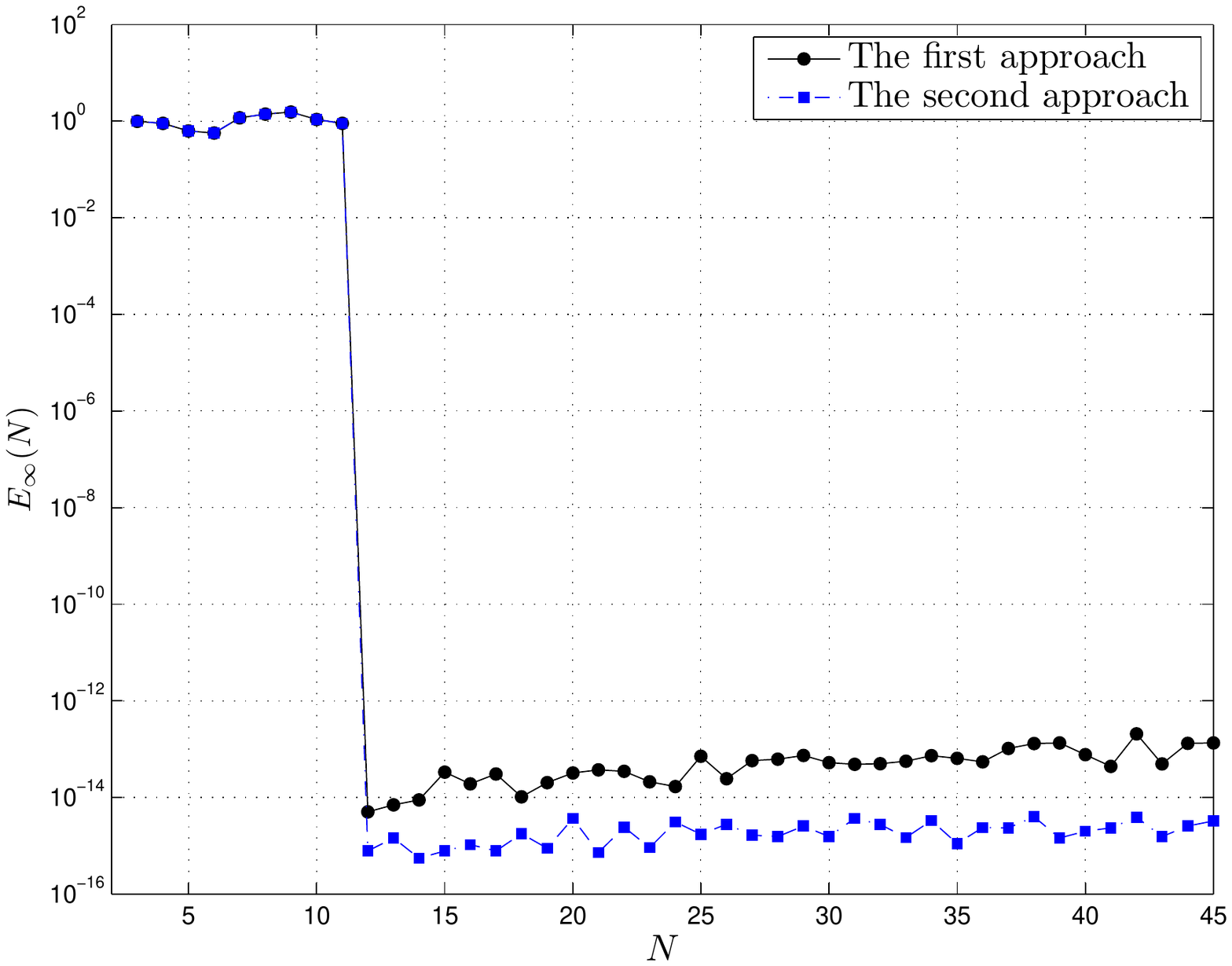}\includegraphics[width=6.5cm,height=8.5cm,keepaspectratio=true]{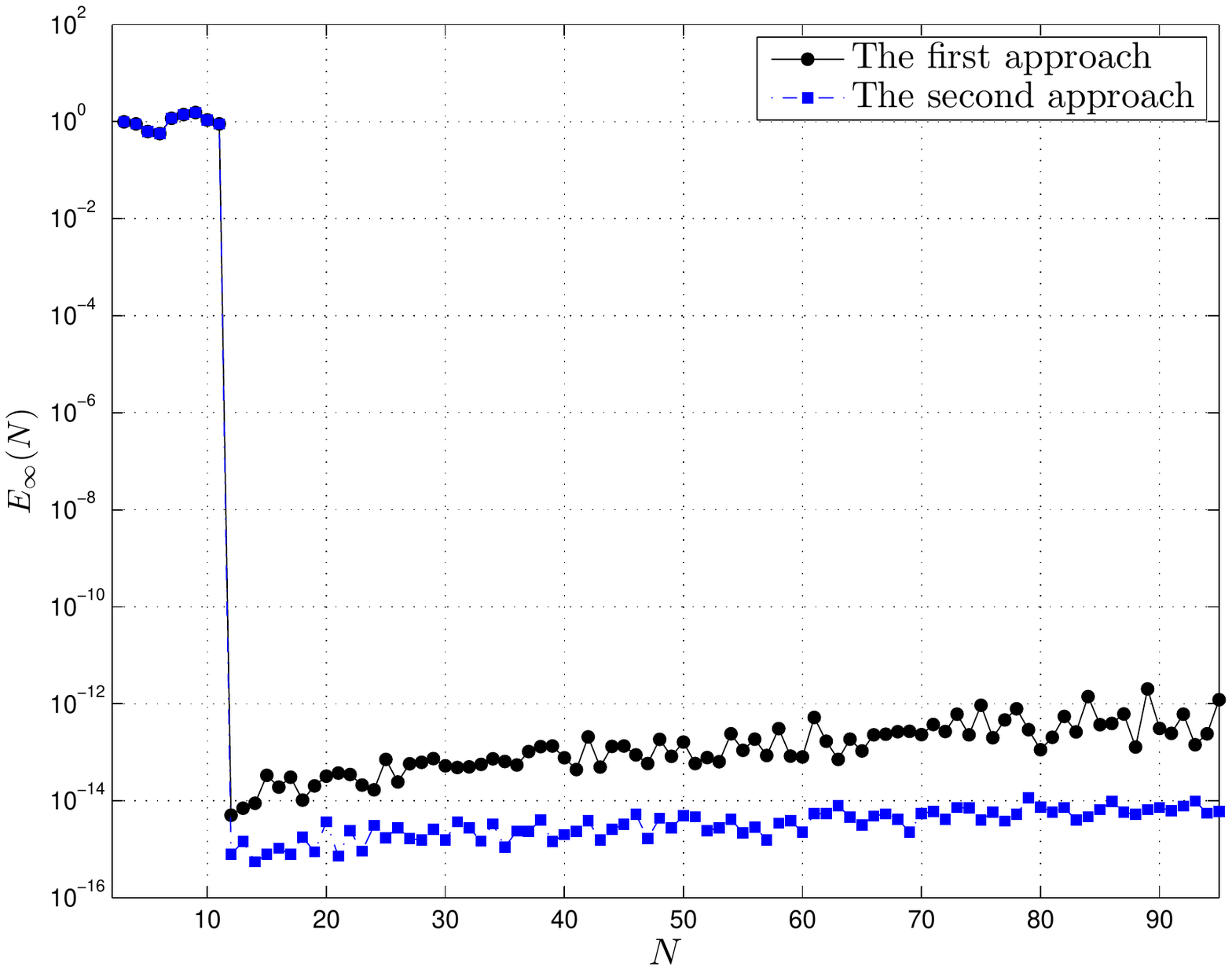}\\\vspace{-3.5cm}
	\centering
	\includegraphics[width=6.5cm,height=8.5cm,keepaspectratio=true]{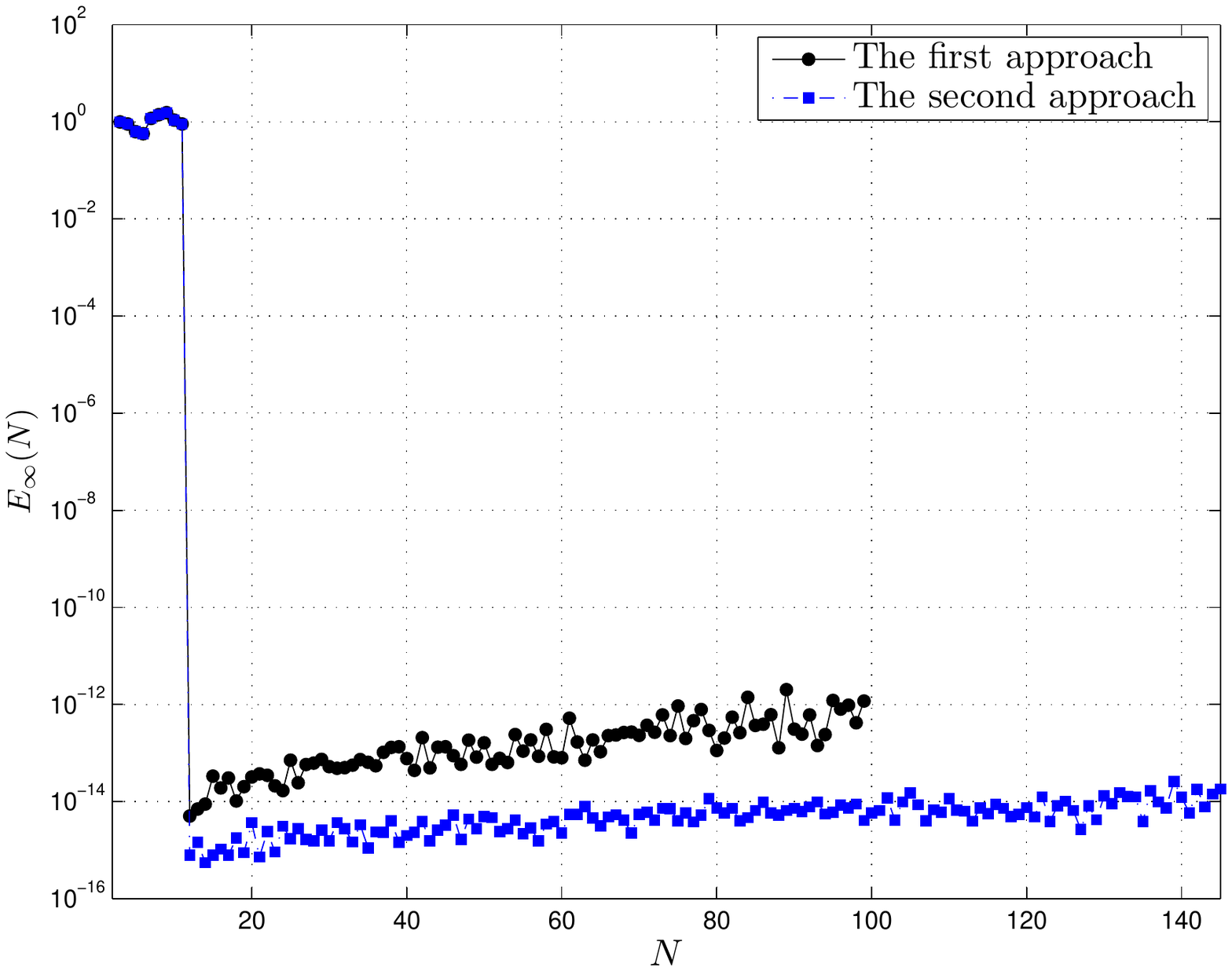}\includegraphics[width=6.5cm,height=8.5cm,keepaspectratio=true]{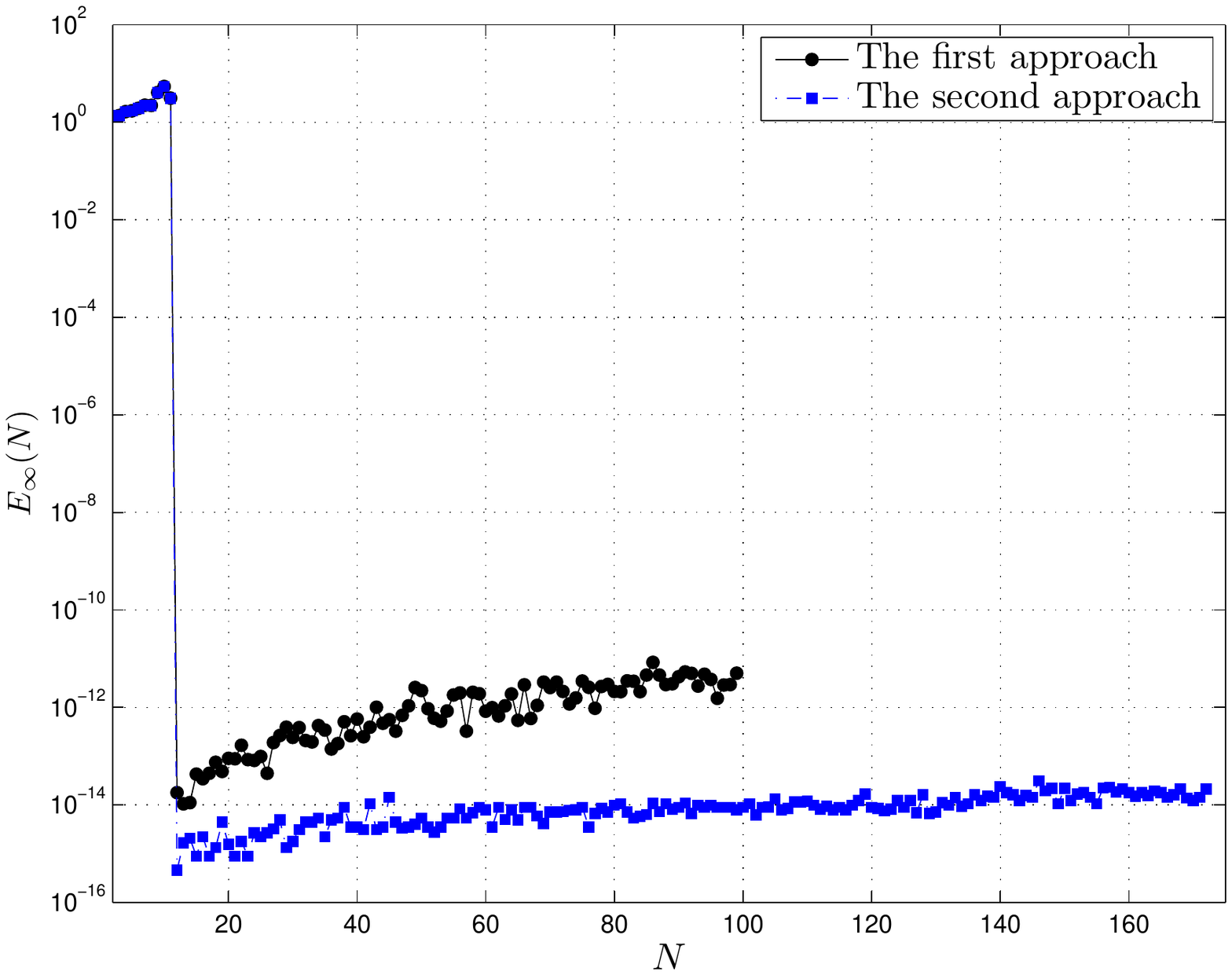}\vspace{-2.5cm}
	\caption{Comparison of the maximum errors of the EK fractional differentiation matrices of a given function $f(x)$ based on \cref{RSEKFDMs}  and \cref{StableRSEKFDMs}  for $b=10,\ k=10,\ \alpha=0.5,\ \beta=-0.5,\ \eta=\sigma=0.5$ versus various values of $N$ such as $N=45,\ 95,\ 145,\ 175$.}
	\label{Fig-2}
\end{figure}
Moreover, in \cref{Fig-3}, the behavior of the condition numbers of the right-sided EK fractional differentiation matrices  stated in \cref{RSEKFDMs}  and \cref{StableRSEKFDMs} for $b=10,\ k=5,\ \alpha=0.5,\ \beta=-0.5,\ \sigma=\eta=0.5$ with $N=45$ and $N=95$ are shown. As we are expected the condition numbers of the right-sided EK fractional differentiation matrices growth like as $\mathcal{O}(N^{2\mu})$ when $N\longrightarrow\infty$. 
\begin{figure}[htbp]
	\vspace{-2.5cm}
	\centering
	\includegraphics[width=6.5cm,height=8.5cm,keepaspectratio=true]{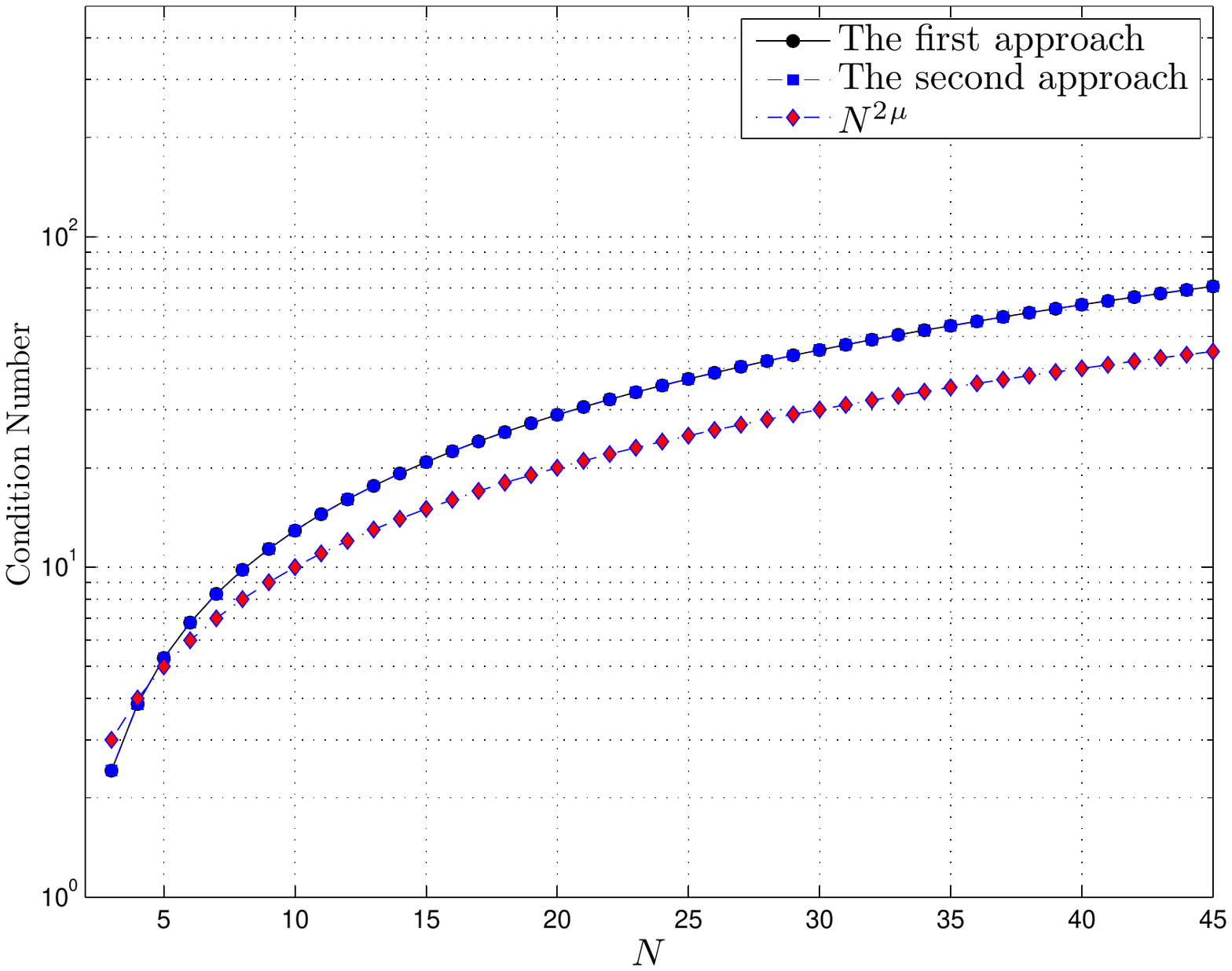}\includegraphics[width=6.5cm,height=8.5cm,keepaspectratio=true]{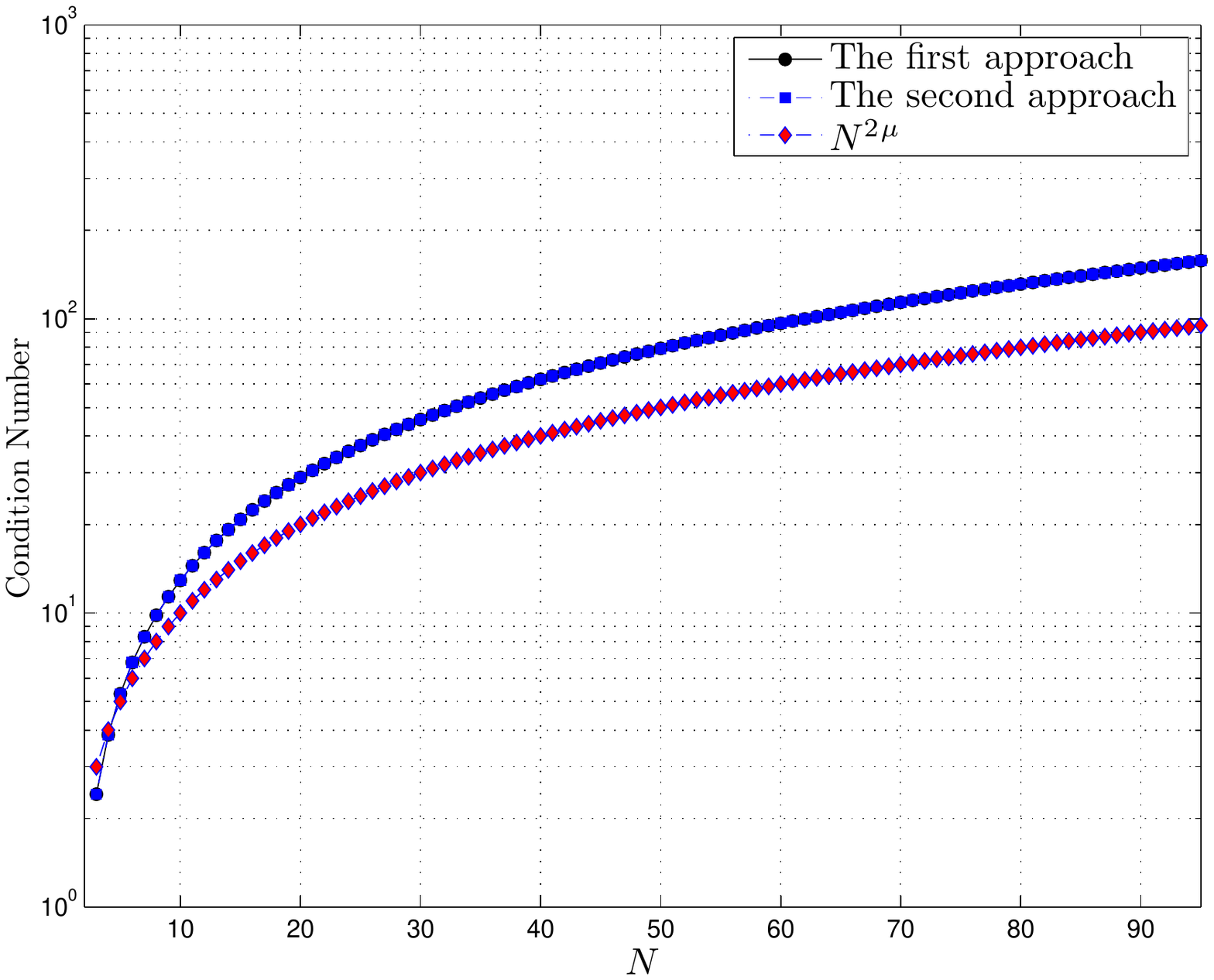}\\\vspace{-2.5cm}
	\caption{The behavior of the condition numbers of the right-sided EK fractional differentiation matrices  stated in \cref{RSEKFDMs}  and \cref{StableRSEKFDMs} for $b=10,\ k=5,\ \alpha=0.5,\ \beta=-0.5,\ \sigma=\eta=0.5$ with $N=45$ and $N=95$.}
	\label{Fig-3}
\end{figure}
	\end{example}
	As one can see, the second approach presented in \cref{StableLSEKFDMs} and \cref{StableRSEKFDMs} is more stable and efficient. These two examples showed that this claim can be verified numerically (see \cref{Exam-1} and \cref{Exam-2}).
In the next section, we focus to present some applications of the LMFs-1 and LMFs-2 for some various problems.
\subsection{ Applications of LMFs-1 and LMFs-2}\label{Sec:5}
This section is devoted to some applications of the newly generated LMFs-1 and LMFs-2. To do so, we part this section into two subsections: In the first subsection, the LMFs-1 and LMFs-2 are applied to solve some ordinary and fractional differential equations. In the second subsection, these bases functions carried out for fractional partial differential equations.
\subsubsection{Ordinary and fractional differential equations}\label{Sec:5-1}
In this section, we will use pseudo-spectral methods based on the newly generated basis functions to solve some linear and nonlinear fractional differential equations. To reach this aim, we divide this section into the following two parts.
\section*{ Linear ordinary and fractional differential equations} In this part, consider the following multi--term fractional differential equations. Let $0<\mu_1<\mu_2<\cdots<\mu_l$, then consider:
\begin{equation}\label{GLEKFDEs}
\sum_{k=1}^{l}c_k(x)\ {}{}_{0}D_{x,\sigma,\eta}^{\mu_k}\Big[y(x)\Big]+c_0(x)y(x)=f(x),\ \ y^{(r)}(0)=0,\ r=0,1,\cdots,\lceil\mu_l\rceil-1,
\end{equation} 
where $c_k(x)$ are some real valued functions. 

We approximate the solution $y(x)$ as follows:
\begin{equation}\label{ApproxSol}
	y(x)\approx y_N(x)=\sum_{s=0}^{N}y\left(x_s^{(\alpha,\beta,\sigma)}\right) {}^1L^{(\beta,\mu,\sigma,\eta)}_k(x), 
\end{equation}
where  $\sigma(\beta-\eta-\mu)>\lceil\mu_l\rceil-1$. This condition guarantees that the approximate solution $y_N(x)$ also satisfies the initial conditions. Now, plugging $y_N(x)$ into \eqref{GLEKFDEs} and then collocating both sides of the above equation at $x_r^{(\alpha,\beta,\sigma)}$ for $r=0,1,\cdots,N$ defined in \eqref{MuntsQuadNodWei}, we get the following system of equations: 
\begin{equation}\label{GLEKFDEs1}
\sum_{k=1}^{l}c_k\left(x_r^{(\alpha,\beta,\sigma)}\right)\ {}{}_{0}D_{x,\sigma,\eta}^{\mu_k}\Big[y_N(x)\Big]\Big|_{x=x_r^{(\alpha,\beta,\sigma)}}+c_0\left(x_r^{(\alpha,\beta,\sigma)}\right)y\left(x_r^{(\alpha,\beta,\sigma)}\right)=f\left(x_r^{(\alpha,\beta,\sigma)}\right).
\end{equation} 
The matrix form of the above system of equations is as follows:
\begin{equation}\label{MatrixFor}
\left(\sum_{k=0}^{l}{\bf C}_k\ {}^L_S{\mathcal{{\bf D}}}^{\mu_k}+{\bf I}\right)\ {\bf Y}={\bf F},
\end{equation}
where ${}^L_S{\mathcal{{\bf D}}}^{\mu_k}$ is the left-sided EK fractional differentiation matrix of order $\mu_k$ which is defined \cref{StableLSEKFDMs},  ${\bf I}$ is the identity matrix and also we have:
\begin{equation}
{\bf Y}=
\begin{bmatrix}
y\left(x_0^{(\alpha,\beta,\sigma)}\right)\\
\vdots\\
\vdots\\
y\left(x_N^{(\alpha,\beta,\sigma)}\right)
\end{bmatrix}
,\ {\bf C}_k=
\begin{bmatrix}
c_k\left(x_0^{(\alpha,\beta,\sigma)}\right) & 0 &\hdots &0\\
0& \ddots &  \ddots &\vdots\\
\vdots& \ddots &\ddots &0\\
0&\hdots & 0 & c_k\left(x_N^{(\alpha,\beta,\sigma)}\right)
\end{bmatrix},
\ {\bf F}=
\begin{bmatrix}
f\left(x_0^{(\alpha,\beta,\sigma)}\right)\\
\vdots\\
\vdots\\
f\left(x_N^{(\alpha,\beta,\sigma)}\right)
\end{bmatrix}.
\end{equation} 
Based on the above notations, the approximate solution ${\bf Y}$ is obtained as follows:
\begin{equation}
{\bf Y}=\left(\sum_{k=0}^{l}{\bf C}_k\ {}^L_S{\mathcal{{\bf D}}}^{\mu_k}+{\bf I}\right)^{-1}\ {\bf F}.
\end{equation}
\section*{ Nonlinear ordinary and fractional differential equations}
In the second part,  we first let $0<\mu_1<\mu_2<\cdots<\mu_l$.  Then consider:
\begin{equation}\label{NonLinearEx}
{}_{0}D_{x,\sigma,\eta}^{\mu_l}=F\left(x,y(x),{}_{0}D_{x,\sigma,\eta}^{\mu_1},\cdots,{}_{0}D_{x,\sigma,\eta}^{\mu_{l-1}}\right),\ y^{(r)}(0)=0,\ r=0,1,\cdots,\lceil\mu_l\rceil-1,
\end{equation}
$\sigma(\beta-\eta-\mu)>\lceil\mu_l\rceil-1$. Substituting \eqref{ApproxSol} into \eqref{NonLinearEx} and then collocating at  $x_r^{(\alpha,\beta,\sigma)}$, we get:
\begin{equation}\label{NonLinearEx1}
{}^L_S{\mathcal{{\bf D}}}^{\mu_l}\ {\bf Y} =F\left(x_r^{(\alpha,\beta,\sigma)}, {\bf Y},{}^L_S{\mathcal{{\bf D}}}^{\mu_1}\ {\bf Y},\cdots,{}^L_S{\mathcal{{\bf D}}}^{\mu_{l-1}}\ {\bf Y}\right),\ r=0,1,\cdots,N.
\end{equation}
The approximate solution ${\bf Y}$ is obtained by solving the above nonlinear system of equations by the well known Newton methods.

Now, we are going to present some linear and nonlinear ordinary and fractional differential equations.
\begin{example}\label{Exam3}
	For the first example, we consider one of the simplest fractional differential equations as follows:
	\begin{equation}
	{}_{0}D_{x,\sigma,\eta}^{\mu}y(x)+\lambda\ y(x)=f(x),\ 0<\mu\leq1,\ y(0)=0. 
	\end{equation} 
	It should be noted that by \cref{Spec-E-KFD}, for $\mu=1$, the previous equation reduces to the well known first order Cauchy-Euler differential equation:
	\begin{equation}
	b_1 xy'(x)+b_0y(x)=f(x),\ y(0)=0,
	\end{equation} 
	where
	\begin{equation}
	b_1=\frac{1}{\sigma},\ b_0={\eta+1+\lambda}.
	\end{equation}
	It is easy to verify that the exact solution of this problem  for $\mu=1$ and
	\begin{equation}
	f(x)=\sqrt{x}\left[(\eta+1+\lambda)\sin\left(\sqrt{x}\right)+\frac{1}{2\sigma}\left(\sin\left(\sqrt{x}\right)+\sqrt{x}\cos\left(\sqrt{x}\right)\right)\right],
	\end{equation}
	 is $y(x)=\sqrt{x}\sin\left(\sqrt{x}\right)$. 
	The behavior of the approximate solutions versus the exact one for $\alpha=-0.5,\ \beta=1,\ \lambda=1,\ \sigma=0.5$ with some values of $\mu$ and $\eta$ with $N=50$ on $[0,10]$ is depicted  in \cref{Fig-41}. Moreover, the maximum error ($E_{\infty}(N)$) together with the condition number of the coefficient matrix for $\alpha=-0.5,\ \beta=1,\ \lambda=1,\ \sigma=0.5,\ \mu=-\eta=1$ with various values of $N$ on $[0,10]$ are plotted in \cref{Fig-42}. It is observed from this figure that when $\mu=-\eta=1$ the maximum error decays exponentially and also the condition number of the coefficient matrix grows like as $\mathcal{O}(N^{2\mu})$ as it is verified numerically in previous section.
	\begin{figure}[htbp]
		\vspace{-2cm}
		\centering
		\includegraphics[width=20cm,height=15cm,keepaspectratio=true]{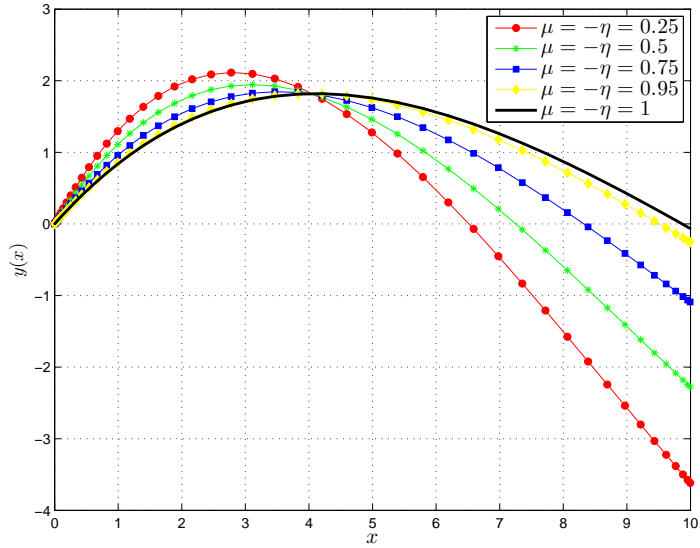}\\\vspace{-3.5cm}
		\caption{The behavior of the approximate solutions versus the exact solution with $\alpha=-0.5,\ \beta=1,\ \lambda=1,\ \sigma=0.5$ and for some values of $\mu$ and $\eta$ with $N=50$ on $[0,10]$.}
		\label{Fig-41}
	\end{figure}
		\begin{figure}[htbp]
			\vspace{-1.8cm}
			\centering
			\includegraphics[width=7cm,height=20cm,keepaspectratio=true]{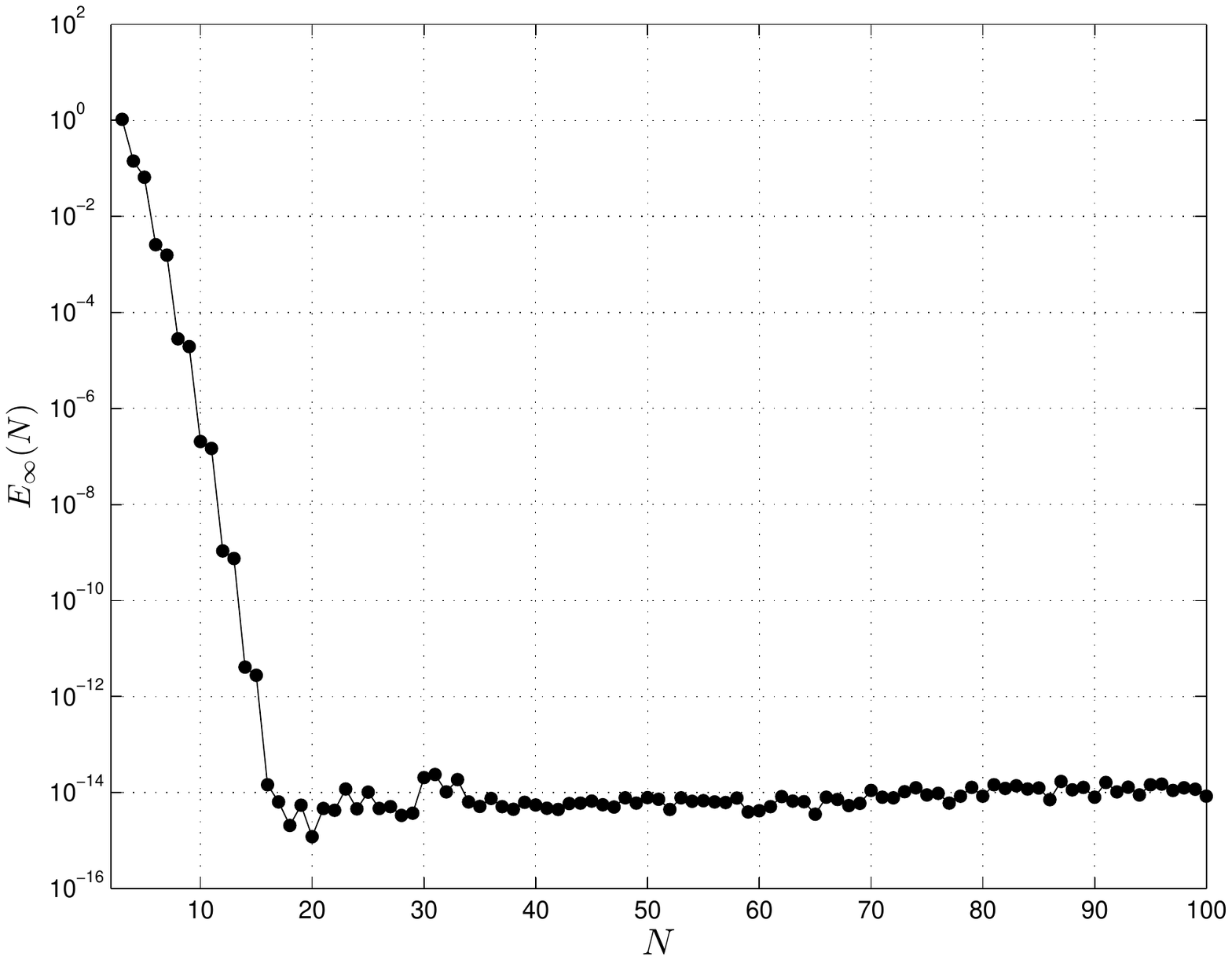}\includegraphics[width=7cm,height=20cm,keepaspectratio=true]{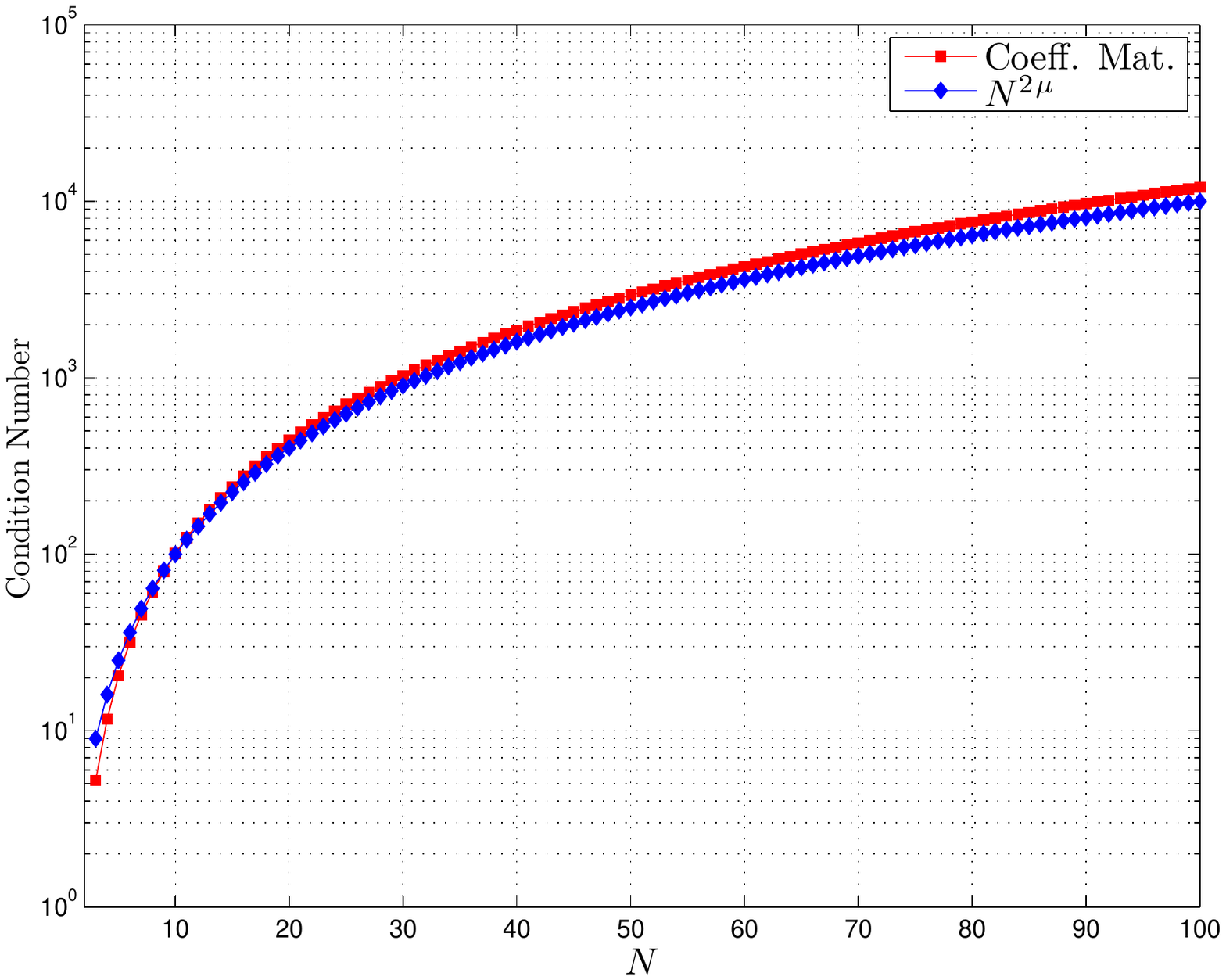}\\\vspace{-1.8cm}
			\caption{The maximum error ($E_{\infty}(N)$) together with the condition number of the coefficient matrix for $\alpha=-0.5,\ \beta=1,\ \lambda=1,\ \sigma=0.5,\ \mu=-\eta=1$ with various values of $N$ on $[0,10]$.}
			\label{Fig-42}
		\end{figure}
\end{example}
\begin{example}\label{Exam4}
	For the second example, we consider the following differential equation:
	 \begin{equation}
{}_{0}D_{x,\sigma,\eta}^{\mu}y(x)+\lambda\ y(x)=f(x),\ 1<\mu\leq2,\ y(0)=y'(0)=0. 
	 \end{equation} 
	 It is worthy to note that by \cref{Spec-E-KFD}, for $\mu=2$, the previous relation reduces to the second order Cauchy-Euler differential equation:
	 \begin{equation}
	a_2 x^2y''(x)+a_1 x y'(x)+a_0y(x)=f(x),\ y(0)=y'(0)=0,
	 \end{equation} 
	 where
	 \begin{equation}
	 a_2=\frac{1}{\sigma^2},\ a_1=\frac{2\eta+4}{\sigma},\ a_0=\eta^2+4\eta+4-\frac{2+\eta}{\sigma}+\lambda.
	 \end{equation}
	The exact solution of this problem is unknown. The behavior of the solutions for $\alpha=-0.5,\ \beta=3,\ \eta=-2,\ \lambda=1$ with some values of $\mu$ and for the cases $\sigma=0.5$  and $\sigma=1$ for the fixed function $f(x)=x^2\sin(x)$ on $[0,10]$ is plotted  in the first and second rows of \cref{Fig-5}, respectively.
	 \begin{figure}[htbp]
	 	\vspace{-.5cm}
	 	\centering
	 	\includegraphics[width=13cm,height=20cm,keepaspectratio=true]{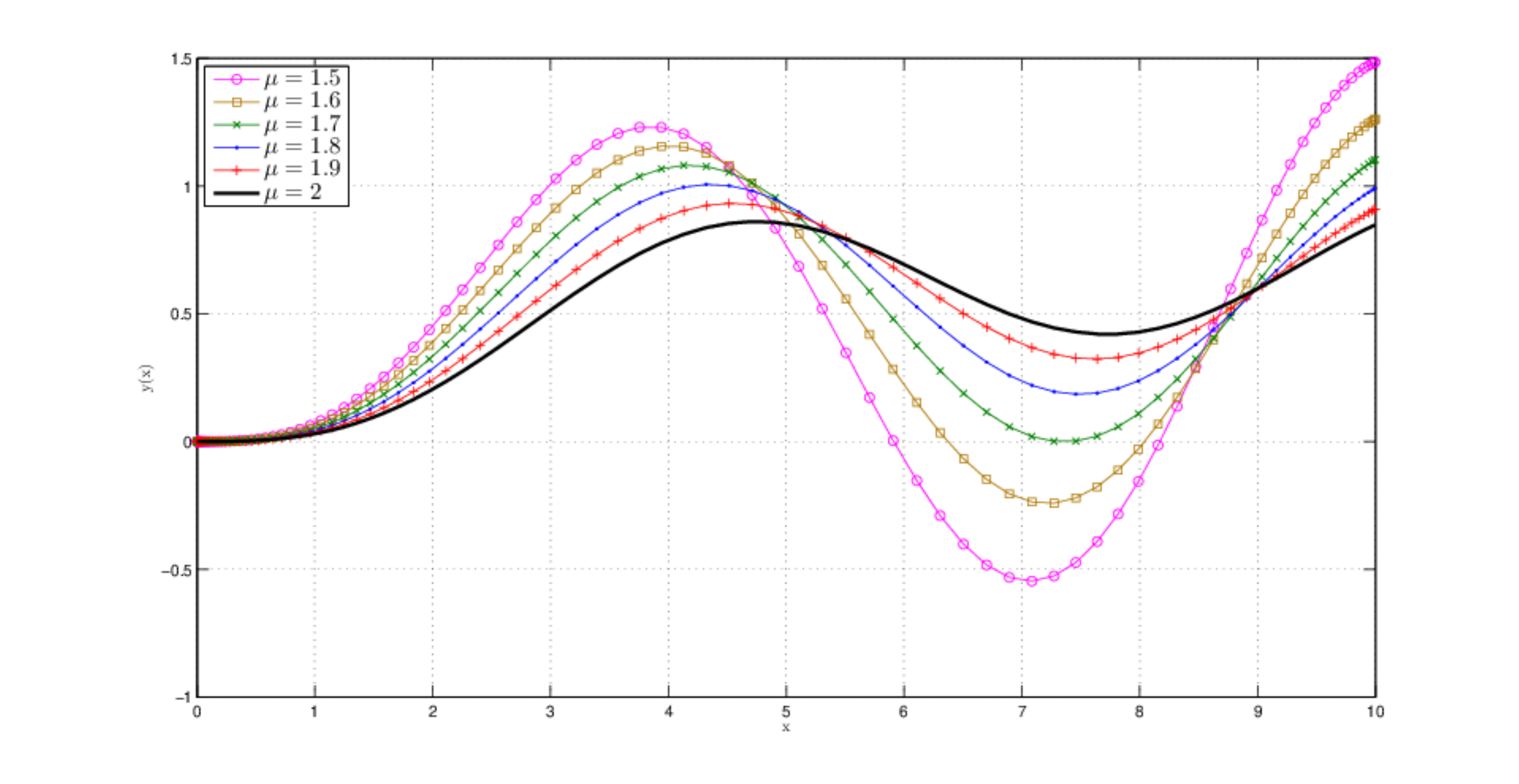}\\
	 		 	\includegraphics[width=13cm,height=20cm,keepaspectratio=true]{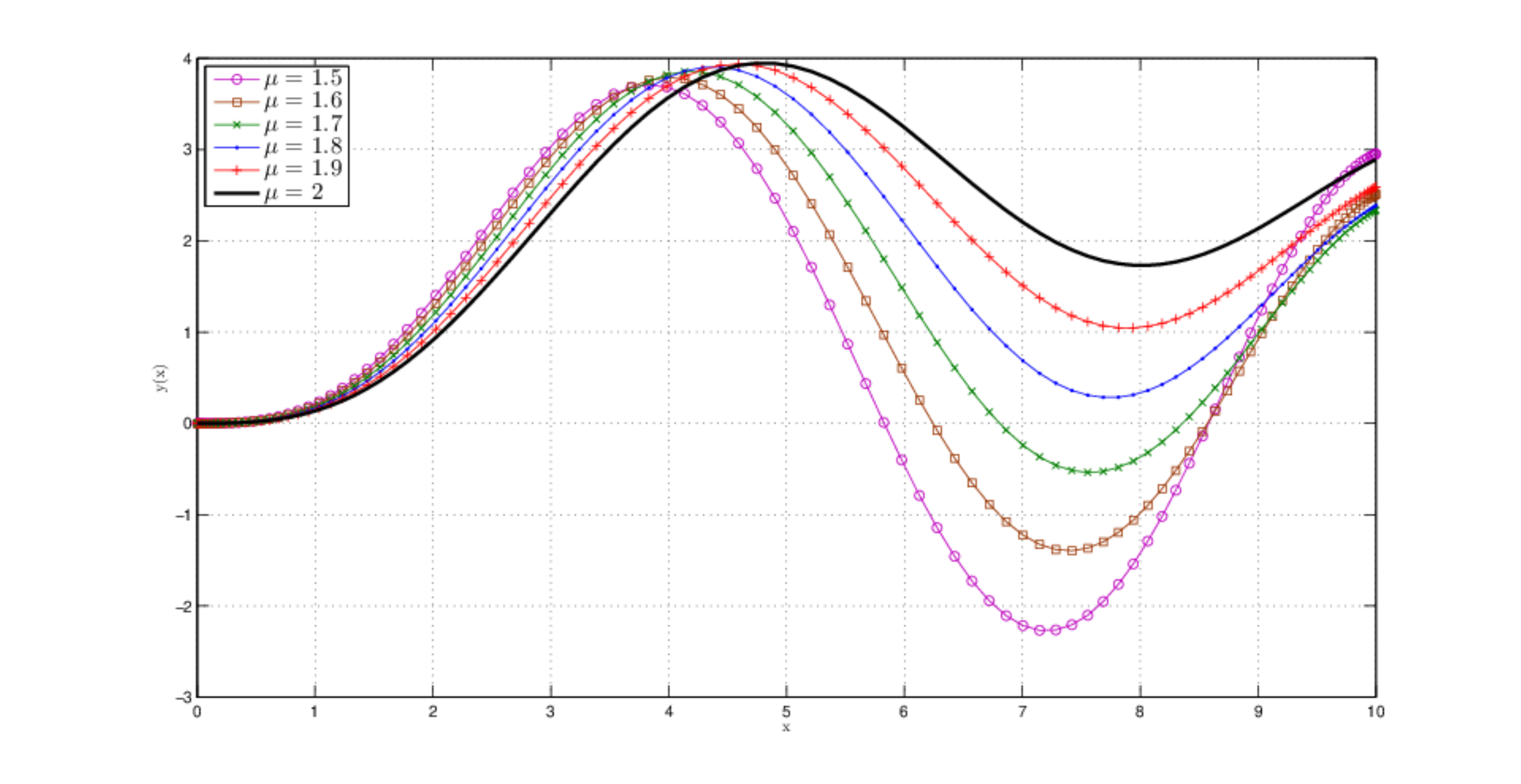}\\\vspace{-.5cm}
	 	\caption{The behavior of the solutions $\alpha=-0.5,\ \beta=3,\ \eta=-2,\ \lambda=1$ with some values of $\mu$ and $f(x)=x^2\sin(x)$ on $[0,10]$ for two cases $\sigma=0.5$ (the first row) and $\sigma=1$ (the second row).}
	 	\label{Fig-5}
	 \end{figure}
\end{example}
In the next example a nonlinear problem is considered.
\begin{example}
	As the third example, we consider the following nonlinear problem:
	 \begin{equation}
	 {}_{0}D_{x,\sigma,\eta}^{\mu}y(x)-\left(\eta+1+\frac{2}{\sigma}x\right)\ y(x)=\frac{x}{\sigma}\left(1-\left[y(x)\right]^2\right),\ 0<\mu\leq1,\ y(0)=0. 
	 \end{equation} 
	 It is easy to see that by \cref{Spec-E-KFD}, for $\mu=1$, the previous equation reduces to the well known Riccati differential equation:
	 \begin{equation}
	 y'(x)-2 y(x)=1-\left[y(x)\right]^2,\ y(0)=0.
	 \end{equation} 
	 The exact solution of the previous equation is as follows \cite{MR2824680}:
	 \begin{equation}
	 y(x)=1+\sqrt{2}\tanh\left[\sqrt{2}x+\frac{1}{2}\ln\left(\frac{\sqrt{2}-1}{\sqrt{2}+1}\right)\right].
	 \end{equation} 
	 This problem is solved by the \textsf{fsolve} of the \textsc{Matlab}  software, numerically. The behavior of the approximate solutions versus the exact one for $\alpha=-0.5,\ \beta=1,\ \lambda=1,\ \sigma=1$ with some values of $\mu$ and $\eta$ with $N=50$ on $[0,2]$ is shown  in \cref{Fig-61} (left side). Moreover, the maximum error ($E_{\infty}(N)$)  for $\alpha=-0.5,\ \beta=1,\ \lambda=1,\ \sigma=1,\ \mu=-\eta=1$ with various values of $N$ on $[0,2]$ are depicted in \cref{Fig-61} (right side). It is easily seen from this figure that when $\mu=-\eta=1$ the maximum error decays exponentially.
	 \begin{figure}[htbp]
	 	\vspace{-.5cm}
	 	\centering
	 	\includegraphics[width=7cm,height=20cm,keepaspectratio=true]{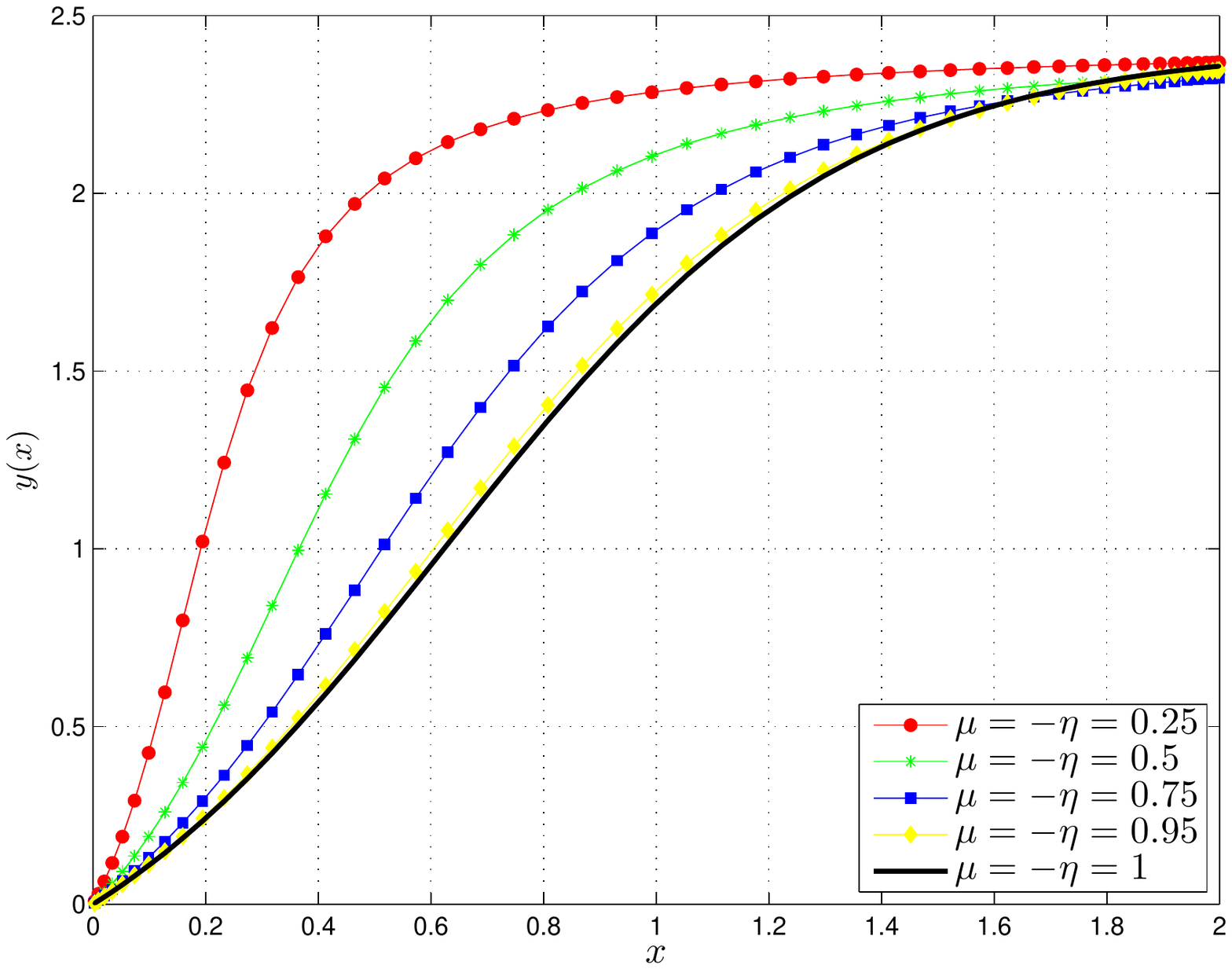}\includegraphics[width=7cm,height=20cm,keepaspectratio=true]{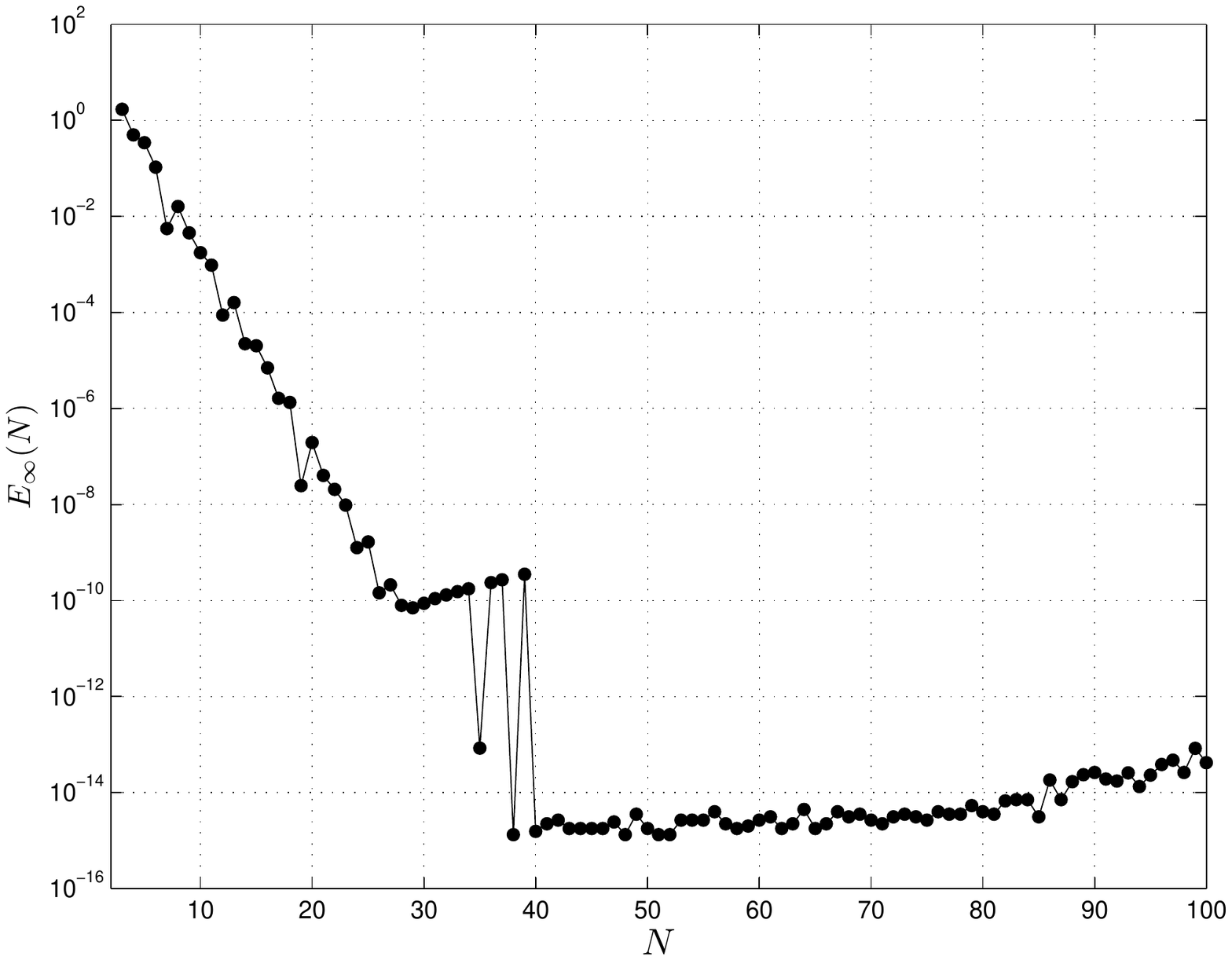}\\\vspace{-1.8cm}
	 	\caption{The behavior of the approximate solutions versus the exact one for $\alpha=-0.5,\ \beta=1,\ \lambda=1,\ \sigma=1$ with some values of $\mu$ and $\eta$ with $N=50$ on $[0,2]$ (left side) and the maximum error ($E_{\infty}(N)$)  for $\alpha=-0.5,\ \beta=1,\ \lambda=1,\ \sigma=1,\ \mu=-\eta=1$ with various values of $N$, for   $x\in [0,2]$.}
	 	\label{Fig-61}
	 \end{figure}
	\end{example}
\subsubsection{ Ordinary and fractional partial differential equations}\label{Sec:5-3}
\begin{example}\label{Ex-1PDE}
	Consider the following fractional partial differential equation:
	\begin{eqnarray}\label{EqEx_3}
	\frac{\partial}{\partial t}u(x,t)&=& d(x,t)\  {}_{0}D_{x,\sigma,\eta}^{\mu}u(x,t)	+s(x,t),\ x\in[0,b],\ t\in[0,T], \label{EqEx_3-1} \\
	u(0,t)&=& \frac{\partial}{\partial x}u(0,t)=0,\ u(0,x)=f(x),\   1<\mu<2, \label{EqEx_3-2}
	\end{eqnarray}
	where $u(x,t)$ is an unknown function and the functions $d(x,t)$ and $s(x,t)$ are arbitrary given functions.
	
	Here, we start to approximate the unknown function $u(x,t)$ in problem \eqref{EqEx_3-1}-\eqref{EqEx_3-2} as follows:
	\begin{equation}
	u(x,t)\simeq \tilde u_N(x,t)=\sum_{k=0}^Na_k(t)\ {}^1L^{(\beta,\mu,\sigma,\eta)}_k(x),
	\end{equation}  
	where the parameters $\beta,\ \mu,\ \eta$ are chosen such that $\tilde u_N(0,t)=\frac{\partial}{\partial x}\tilde u_N(0,t)=0$. Plugging $\tilde u_n(x,t)$ into \eqref{EqEx_3-1}-\eqref{EqEx_3-2} and collocating both sides at $\{x_j\}_{j=0}^N=\left\{x_j^{(\alpha,\beta,\sigma)}\right\}_{j=0}^N$ which is defined in \eqref{MuntsQuadNodWei}, we immediately get:
	\begin{subequations}\label{main-IVP-im}
		\begin{eqnarray}
		&&\dot{\mathbf{a}}(t)=\mathbf C(t)\  {}^L_S{\mathcal{{\bf D}}}^{\mu} \ \mathbf {a}(t)+\mathbf{s}(t) ,\\
		&&\mathbf a(0)=\,\mathbf F,
		\end{eqnarray}
	\end{subequations}
	where  
	\[
	\mathbf a(t)=\begin{bmatrix}
	a_0(t)\\
	a_1(t)\\
	\vdots\\
	a_N(t)
	\end{bmatrix},\ 
	\mathbf s(t)=\begin{bmatrix}
	s(x_0,t)\\
	s(x_1,t)\\
	\vdots\\
	s(x_N,t)
	\end{bmatrix},
	\ \mathbf C(t)=\text{diag}\left(d(x_0,t),\dots,d(x_N,t)\right),\ \mathbf F=\begin{bmatrix}
	f(x_0)\\
	f(x_1)\\
	\vdots\\
	f(x_N)
	\end{bmatrix}.
	\]	
	The previous system of ordinary differential equations can be solved numerically by  the \textsf{ode45} of the \textsc{Matlab} software with $\textsc{RelTol}=10^{-14},\ \textsc{AbsTol}=10^{-14}$. As a simple example, we take $u(x,t)=x^{\sigma\nu}\sin(t^2)$ and $\displaystyle d(x,t)=-\frac{1}{1+x+t}$. This problem is solved numerically with $\alpha=0.5,\ \beta=3,\ \sigma=0.5,\ \nu=5,\ \eta=-\mu=1.75$ and $N=10$ for $(x,t)\in[0,5]\times[0,5]$. The behavior of the approximate solution and absolute error are plotted in \cref{Fig-71}. 
		\begin{figure}[htbp]
			\vspace{-2.5cm}
			\centering
			\includegraphics[width=7cm,height=20cm,keepaspectratio=true]{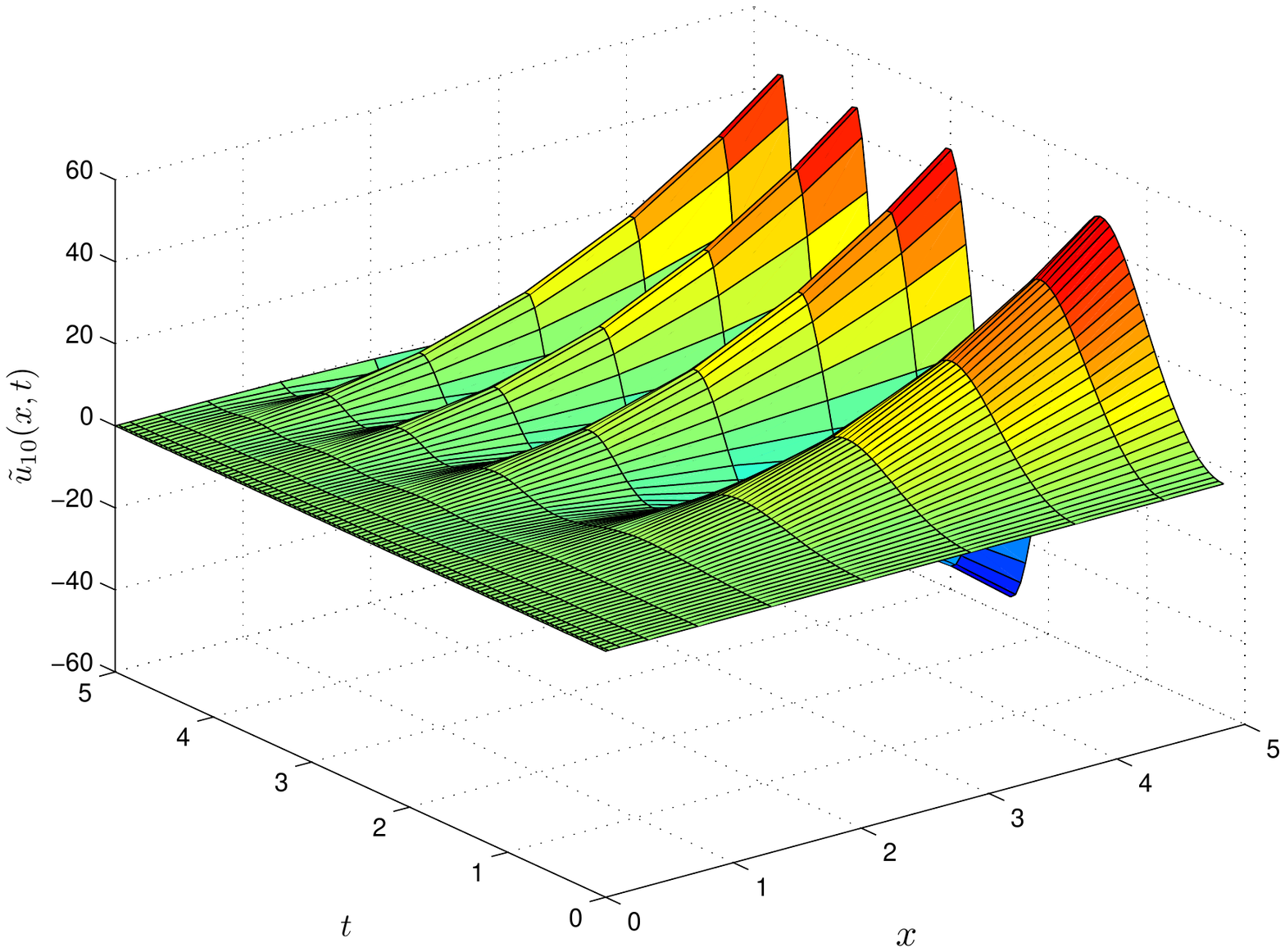}\includegraphics[width=7cm,height=20cm,keepaspectratio=true]{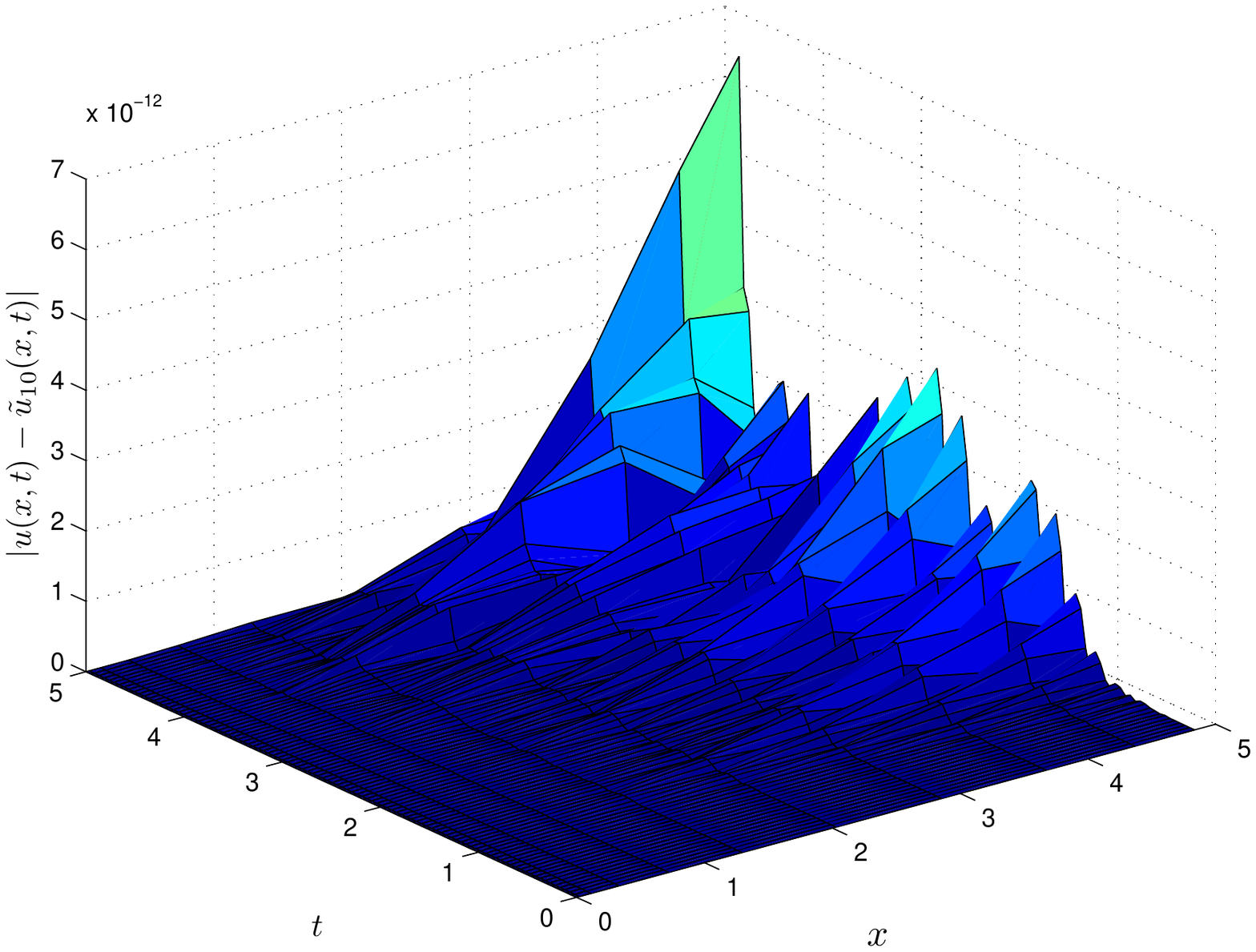}\\\vspace{-1.8cm}
			\caption{The exact solution together with the absolute error with $\alpha=0.5,\ \beta=3,\ \sigma=0.5,\ \nu=5,\ \eta=-\mu=-1.75$ and $N=10$ for $(x,t)\in[0,5]\times[0,5]$.}
			\label{Fig-71}
		\end{figure}
	\end{example}
	
The last example is presented to show that the newly generated Lagrange basis functions can be carried out for the problems with integer order derivatives. To do so, we need to provide the first and second order differentiation matrices. In the next theorem, we present an efficient approach to obtain these matrices. 
\begin{theorem}\label{FirstMatDif_1}
	Let  $x_r^{(\alpha,\beta,\sigma)}$  with $ r=0,1,\cdots,N$ be the nodes defined in
	\eqref{MuntsQuadNodWei}.  Then the first order differentiation  matrix based on $\displaystyle\Big\{\left(\frac{x}{x_r^{(\alpha,\beta,\sigma)}}\right)^{\sigma\beta}h^{\sigma}_r(x)\Big\}_{r=0}^N$ is as follows:
	\begin{equation}
	{}{\mathcal{{\bf D}}}^1={\mathcal{{\bf U}}}\ {\mathcal{{\bf V}}}^{-1},
	\end{equation}
	and the entries of matrices ${\mathcal{{\bf U}}}$ and ${\mathcal{{\bf V}}}$ are denoted by $(u_{k,i})$ and $(v_{k,i})$ for $k,i=0,1,\cdots,N$, respectively and also given as follows:
	\begin{eqnarray}
	v_{k,i}&=&\left(x_k^{(\alpha,\beta,\sigma)}\right)^{\sigma\beta}P_i^{(\alpha,\beta)}\left(2\left(\frac{x_k^{(\alpha,\beta,\sigma)}}{b}\right)^\sigma-1\right),\label{stableLDifMat12}\\
	u_{k,i}&=&\sigma(i+\beta)\left(x_k^{(\alpha,\beta,\sigma)}\right)^{\sigma\beta-1}P_i^{(\alpha+1,\beta-1)}\left(2\left(\frac{x_k^{(\alpha,\beta,\sigma)}}{b}\right)^\sigma-1\right)
	.\label{stableLDifMat22}
	\end{eqnarray}
	\begin{proof}
	The proof is immediately obtained by the use of formula \eqref{Special_1} and the method presented in \cref{StableLSEKFDMs}.
		\end{proof}
\end{theorem}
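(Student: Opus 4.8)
The plan is to mirror the modal-to-nodal change-of-basis argument of \cref{StableLSEKFDMs}, replacing the Erd\'elyi--Kober fractional derivative used there by the ordinary first derivative and invoking the differentiation formula \eqref{Special_1} in place of \cref{EKFD}. The nodal cardinal functions of the statement, $\ell_r(x):=\left(\frac{x}{x_r^{(\alpha,\beta,\sigma)}}\right)^{\sigma\beta}h^{\sigma}_r(x)$, and the modal functions $\phi_i(x):=x^{\sigma\beta}P_i^{(\alpha,\beta)}\left(2\left(\frac{x}{b}\right)^\sigma-1\right)$ span one and the same $(N+1)$-dimensional space $\mathcal{G}_N=\big\{x^{\sigma\beta}\psi(x^\sigma):\ \psi\in\Bbb{P}_N\big\}$. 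First I would record this identification (note that $\phi_i$ is just the Jacobi--M\"untz function ${}^1\mathcal{J}^{(\alpha,\beta,\mu,\sigma,\eta)}_i$ for the choice $\eta=-\mu$), and observe that the Kronecker property $\ell_r(x_k^{(\alpha,\beta,\sigma)})=\delta_{rk}$ makes $\{\ell_r\}$ the dual nodal basis to $\{\phi_i\}$.

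Next I would introduce the two matrices of the statement as the evaluation and derivative-evaluation matrices of the modal basis at the collocation nodes, $v_{k,i}=\phi_i(x_k^{(\alpha,\beta,\sigma)})$ and $u_{k,i}=\phi_i'(x_k^{(\alpha,\beta,\sigma)})$. The entry $v_{k,i}$ is precisely \eqref{stableLDifMat12}. For $u_{k,i}$ I would apply \eqref{Special_1}, which after using $\Gamma(i+\beta+1)/\Gamma(i+\beta)=i+\beta$ gives $\phi_i'(x)=\sigma(i+\beta)\,x^{\sigma\beta-1}P_i^{(\alpha+1,\beta-1)}\!\left(2\left(\frac{x}{b}\right)^\sigma-1\right)$; evaluating at $x_k^{(\alpha,\beta,\sigma)}$ reproduces \eqref{stableLDifMat22}.

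Now expand any $f\in\mathcal{G}_N$ in the modal basis as $f=\sum_i c_i\phi_i$. Writing $\mathbf f=(f(x_k^{(\alpha,\beta,\sigma)}))_k$ and $\mathbf f'=(f'(x_k^{(\alpha,\beta,\sigma)}))_k$, the two matrices give $\mathbf f=\mathcal{{\bf V}}\,\mathbf c$ and $\mathbf f'=\mathcal{{\bf U}}\,\mathbf c$; eliminating $\mathbf c$ yields $\mathbf f'=\mathcal{{\bf U}}\,\mathcal{{\bf V}}^{-1}\mathbf f$, so $\mathcal{{\bf U}}\,\mathcal{{\bf V}}^{-1}$ maps nodal values of $f$ to nodal values of $f'$, which is exactly the first-order differentiation matrix $\mathcal{{\bf D}}^1$ associated with $\{\ell_r\}$ (for general $f$ it returns the nodal values of $(\mathcal{I}f)'$). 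The invertibility of $\mathcal{{\bf V}}$ needed here is immediate once we write $\mathcal{{\bf V}}=\diag\!\big((x_k^{(\alpha,\beta,\sigma)})^{\sigma\beta}\big)\,\big(P_i^{(\alpha,\beta)}(y_k)\big)_{k,i}$, with $y_k=2(x_k^{(\alpha,\beta,\sigma)}/b)^\sigma-1$ the distinct Gauss--Jacobi nodes: the Jacobi-polynomial factor is a generalized Vandermonde matrix at distinct points, hence nonsingular, and the diagonal factor is positive.

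The only point that departs from \cref{StableLSEKFDMs}, and the step I would emphasize, is that the first derivative of a modal function does \emph{not} remain in $\mathcal{G}_N$: by \eqref{Special_1} its radial weight drops from $x^{\sigma\beta}$ to $x^{\sigma\beta-1}$ and its Jacobi index shifts to $(\alpha+1,\beta-1)$. In \cref{StableLSEKFDMs} the Erd\'elyi--Kober derivative preserved the Jacobi--M\"untz space, permitting a direct nodal re-expansion of the differentiated basis; here that closure fails. The resolution is that the identity $\mathcal{{\bf D}}^1=\mathcal{{\bf U}}\,\mathcal{{\bf V}}^{-1}$ never requires $\phi_i'$ to lie in $\mathcal{G}_N$ — it uses only the pointwise values $\phi_i'(x_k^{(\alpha,\beta,\sigma)})$ — so the formula \eqref{stableLDifMat22} furnished by \eqref{Special_1} is all that is needed to close the argument.
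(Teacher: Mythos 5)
Your proof is correct and follows essentially the same route the paper intends: its one-line proof ("use formula \eqref{Special_1} and the method of \cref{StableLSEKFDMs}") is exactly your modal/nodal change-of-basis argument, in which ${\mathcal{{\bf V}}}$ collects nodal values of the modal functions, ${\mathcal{{\bf U}}}$ collects nodal values of their derivatives computed via \eqref{Special_1}, and ${\mathcal{{\bf D}}}^1={\mathcal{{\bf U}}}\,{\mathcal{{\bf V}}}^{-1}$ follows by eliminating the modal coefficients, with invertibility of ${\mathcal{{\bf V}}}$ from the generalized Vandermonde structure at the distinct Gauss--Jacobi points. Your closing observation --- that, unlike the Erd\'elyi--Kober case, the first derivative leaves the space $\big\{x^{\sigma\beta}\psi(x^\sigma):\ \psi\in\Bbb{P}_N\big\}$, so the literal re-expansion step \eqref{StabDifMat2} of \cref{StableLSEKFDMs} is unavailable and one must instead work only with the pointwise values $\phi_i'(x_k^{(\alpha,\beta,\sigma)})$ --- is a correct and worthwhile refinement that the paper's terse proof glosses over.
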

\begin{theorem}\label{FirstMatDif_2}
	Let  $x_r^{(\alpha,\beta,\sigma)}$  with $ r=0,1,\cdots,N$ be defined in
	\eqref{MuntsQuadNodWei}.  Then the first order differentiation  matrix based on $\displaystyle\Big\{\left(\frac{x}{x_r^{(\alpha,\beta,\sigma)}}\right)^{\sigma\eta}\left(b^\sigma-\left(\frac{x}{x_r^{(\alpha,\beta,\sigma)}}\right)^{\sigma}\right)^\alpha h^{\sigma}_r(x)\Big\}_{r=0}^N$ is as follows:
	\begin{equation}
	{}{\mathcal{{\bf D}}}^1={\mathcal{{\bf U}}}\ {\mathcal{{\bf V}}}^{-1},
	\end{equation}
	and the entries of matrices ${\mathcal{{\bf U}}}$ and ${\mathcal{{\bf V}}}$ are denoted by $(u_{k,i})$ and $(v_{k,i})$ for $k,i=0,1,\cdots,N$, respectively and also given as follows:
	\begin{eqnarray*}
	v_{k,i}&=&\left(x_k^{(\alpha,\beta,\sigma)}\right)^{\sigma\eta}\left(b^\sigma-\left(x_k^{(\alpha,\beta,\sigma)}\right)^\sigma\right)^\alpha 
	P_i^{(\alpha,\beta)}\left(2\left(\frac{x_k^{(\alpha,\beta,\sigma)}}{b}\right)^\sigma-1\right),\label{stableLDifMat123}\\
	u_{k,i}&=&\sigma\eta\left(x_k^{(\alpha,\beta,\sigma)}\right)^{\sigma\eta-1}\left(b^\sigma-\left(x_k^{(\alpha,\beta,\sigma)}\right)^\sigma\right)^\alpha P_i^{(\alpha,\beta)}\left(2\left(\frac{x_k^{(\alpha,\beta,\sigma)}}{b}\right)^\sigma-1\right)\nonumber\\ &&-\sigma(i+\alpha)\left(x_k^{(\alpha,\beta,\sigma)}\right)^{\sigma(\eta+1)-1}\left(b^\sigma-\left(x_k^{(\alpha,\beta,\sigma)}\right)^\sigma\right)^{\alpha-1} P_i^{(\alpha-1,\beta+1)} \left(2\left(\frac{x_k^{(\alpha,\beta,\sigma)}}{b}\right)^\sigma-1\right)
	.\label{stableLDifMat223}
	\end{eqnarray*}
	\begin{proof}
		The proof is immediately obtained by the use of formula \eqref{Special_2} and the method presented in \cref{StableLSEKFDMs}.
	\end{proof}
\end{theorem}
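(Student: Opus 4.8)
The plan is to follow the strategy of \cref{StableLSEKFDMs} essentially verbatim, replacing the Jacobi--M\"untz functions of the first kind by those of the second kind and replacing the left--sided Erd\'elyi--Kober derivative by the ordinary first derivative $d/dx$. The underlying observation is that the displayed basis is exactly the nodal basis (LMFs-2) of the finite--dimensional space ${}^2\mathcal{F}^{(\alpha,\beta,\sigma,\eta)}_N(\Lambda)$, whereas $\{{}^2\mathcal{J}^{(\alpha,\beta,\sigma,\eta)}_i(x)\}_{i=0}^N$ is the modal basis of the same space. First I would introduce the transition matrix ${\mathcal{{\bf V}}}$ between these two bases, with $v_{k,i}={}^2\mathcal{J}^{(\alpha,\beta,\sigma,\eta)}_i(x_k^{(\alpha,\beta,\sigma)})$; because the nodal functions obey the Kronecker delta property, ${\mathcal{{\bf V}}}$ is precisely the collocation matrix of the modal basis and is invertible, so any $u\in{}^2\mathcal{F}^{(\alpha,\beta,\sigma,\eta)}_N(\Lambda)$ with nodal vector $\mathbf{u}$ has modal coefficients $\mathbf{c}={\mathcal{{\bf V}}}^{-1}\mathbf{u}$.

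Next I would differentiate the modal representation term by term. Writing $u=\sum_i c_i\,{}^2\mathcal{J}^{(\alpha,\beta,\sigma,\eta)}_i$ and sampling $u'$ at the collocation nodes yields $(\,{}{\mathcal{{\bf D}}}^1\mathbf{u})_k=u'(x_k^{(\alpha,\beta,\sigma)})=\sum_i u_{k,i}c_i$, where $u_{k,i}=\frac{d}{dx}{}^2\mathcal{J}^{(\alpha,\beta,\sigma,\eta)}_i(x)\big|_{x=x_k^{(\alpha,\beta,\sigma)}}$. Combining this with $\mathbf{c}={\mathcal{{\bf V}}}^{-1}\mathbf{u}$ gives ${}{\mathcal{{\bf D}}}^1={\mathcal{{\bf U}}}\,{\mathcal{{\bf V}}}^{-1}$ with ${\mathcal{{\bf U}}}=(u_{k,i})$, exactly in the form of \cref{StableLSEKFDMs}.

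The only genuine computation is the entry $u_{k,i}$. Here I would apply the product rule to ${}^2\mathcal{J}^{(\alpha,\beta,\sigma,\eta)}_i(x)=x^{\sigma\eta}\big[(b^\sigma-x^\sigma)^\alpha P_i^{(\alpha,\beta)}(2(x/b)^\sigma-1)\big]$. Differentiating the factor $x^{\sigma\eta}$ produces $\sigma\eta\,x^{\sigma\eta-1}(b^\sigma-x^\sigma)^\alpha P_i^{(\alpha,\beta)}(2(x/b)^\sigma-1)$, while the derivative of the bracketed factor is supplied directly by \eqref{Special_2}; after using $\Gamma(i+\alpha+1)/\Gamma(i+\alpha)=i+\alpha$ this contributes $-\sigma(i+\alpha)\,x^{\sigma(\eta+1)-1}(b^\sigma-x^\sigma)^{\alpha-1}P_i^{(\alpha-1,\beta+1)}(2(x/b)^\sigma-1)$. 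Collocating the sum of the two pieces at $x_k^{(\alpha,\beta,\sigma)}$ then gives precisely the claimed two--term expression for $u_{k,i}$, and $v_{k,i}$ is just the evaluation of ${}^2\mathcal{J}^{(\alpha,\beta,\sigma,\eta)}_i$ at the same node.

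The point that deserves a word of care---rather than any real obstacle---is that, unlike the Erd\'elyi--Kober situation of \cref{StableLSEKFDMs}, the ordinary derivative of ${}^2\mathcal{J}^{(\alpha,\beta,\sigma,\eta)}_i$ does \emph{not} lie in ${}^2\mathcal{F}^{(\alpha,\beta,\sigma,\eta)}_N(\Lambda)$, since its second term carries the weaker factor $(b^\sigma-x^\sigma)^{\alpha-1}$. This is harmless for the construction: the matrix ${}{\mathcal{{\bf D}}}^1={\mathcal{{\bf U}}}\,{\mathcal{{\bf V}}}^{-1}$ reproduces the exact nodal values of $u'$ for every $u$ in the span, because the modal basis serves only as an intermediary and the derivative is merely sampled at the nodes, never re-expanded in the space. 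Hence the representation through ${\mathcal{{\bf V}}}^{-1}$ and the exactness at the collocation points carry over unchanged, and the proof concludes as in \cref{StableLSEKFDMs}.
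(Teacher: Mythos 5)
Your proof is correct and follows essentially the same route as the paper, whose entire proof is ``apply formula \eqref{Special_2} and the method of \cref{StableLSEKFDMs}'': you take ${\mathcal{{\bf V}}}$ to be the modal collocation matrix, assemble ${\mathcal{{\bf U}}}$ from the nodal values of the derivatives via the product rule together with \eqref{Special_2} (using $\Gamma(i+\alpha+1)/\Gamma(i+\alpha)=i+\alpha$), and conclude ${}{\mathcal{{\bf D}}}^1={\mathcal{{\bf U}}}\,{\mathcal{{\bf V}}}^{-1}$, which reproduces the stated entries exactly. One point in your write-up is in fact sharper than the paper: a verbatim transcription of \cref{StableLSEKFDMs} would re-expand the derivative $\frac{d}{dx}\,{}^2\mathcal{J}^{(\alpha,\beta,\sigma,\eta)}_i$ in the nodal basis of ${}^2\mathcal{F}^{(\alpha,\beta,\sigma,\eta)}_N(\Lambda)$, which is illegitimate here because the term carrying $(b^\sigma-x^\sigma)^{\alpha-1}$ leaves that space; your observation that the derivative is only sampled at the nodes and never re-expanded is precisely the adjustment needed, and the paper's one-line proof glosses over it.
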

\begin{remark}
	It is worthwhile to point out that the differentiation matrices of order $n$ of the mentioned basis functions  are obtained by $\underbrace{{}{\mathcal{{\bf D}}}^1\times	{}{\mathcal{{\bf D}}}^1\times\cdots\times 	{}{\mathcal{{\bf D}}}^1}_{n\  \text{times}}$. 
\end{remark}
Now, we consider a well-known nonlinear partial differential equation which is so-called as Burgers' equation \cite{MR2867779}. 
\begin{example}
	Consider the following nonlinear partial differential equation \cite{MR2867779}:
	\begin{eqnarray}\label{BURG}
	&&\frac{\partial u}{\partial t}=\epsilon \frac{\partial^2 u}{\partial x^2}-u\frac{\partial u}{\partial x}+s(x,t),\ \epsilon>0,\  x\in[0,1],\ t\in[0,T], \label{EqEx_4-1} \\
	&&u(0,t)= u(b,t)=0,\ u(0,x)=f(x). \label{EqEx_4-2}
	\end{eqnarray}
	Suppose that the exact solution of this problem is as:
	\begin{equation}\label{Exact_Sol}
	u(x,t)=\left( 1-\sqrt {x} \right) ^{3/2}{x}^{3/2}\cos \left( \sqrt {x}
	\right)\cos(t^2).
	\end{equation}
	It is easy to see that $u(x,t)$ has singularity at $x=0$ and $x=1$. This problem is solved by the \textsf{fsolve} of the \textsc{Matlab}  software, numerically. This problem is solved both for $\sigma=0.5$ and $\sigma=1$. For both cases we take $\alpha=0.5,\ \beta=\eta=1,\ T=10$ and $N=20$. The behavior of the approximate solutions and the absolute errors for the case $\sigma=0.5$  and $\sigma=1$ are plotted in \cref{Fig-81} and \cref{Fig-82}, respectively. 
	\begin{figure}[htbp]
		\vspace{-2.5cm}
		\centering
		\includegraphics[width=7cm,height=20cm,keepaspectratio=true]{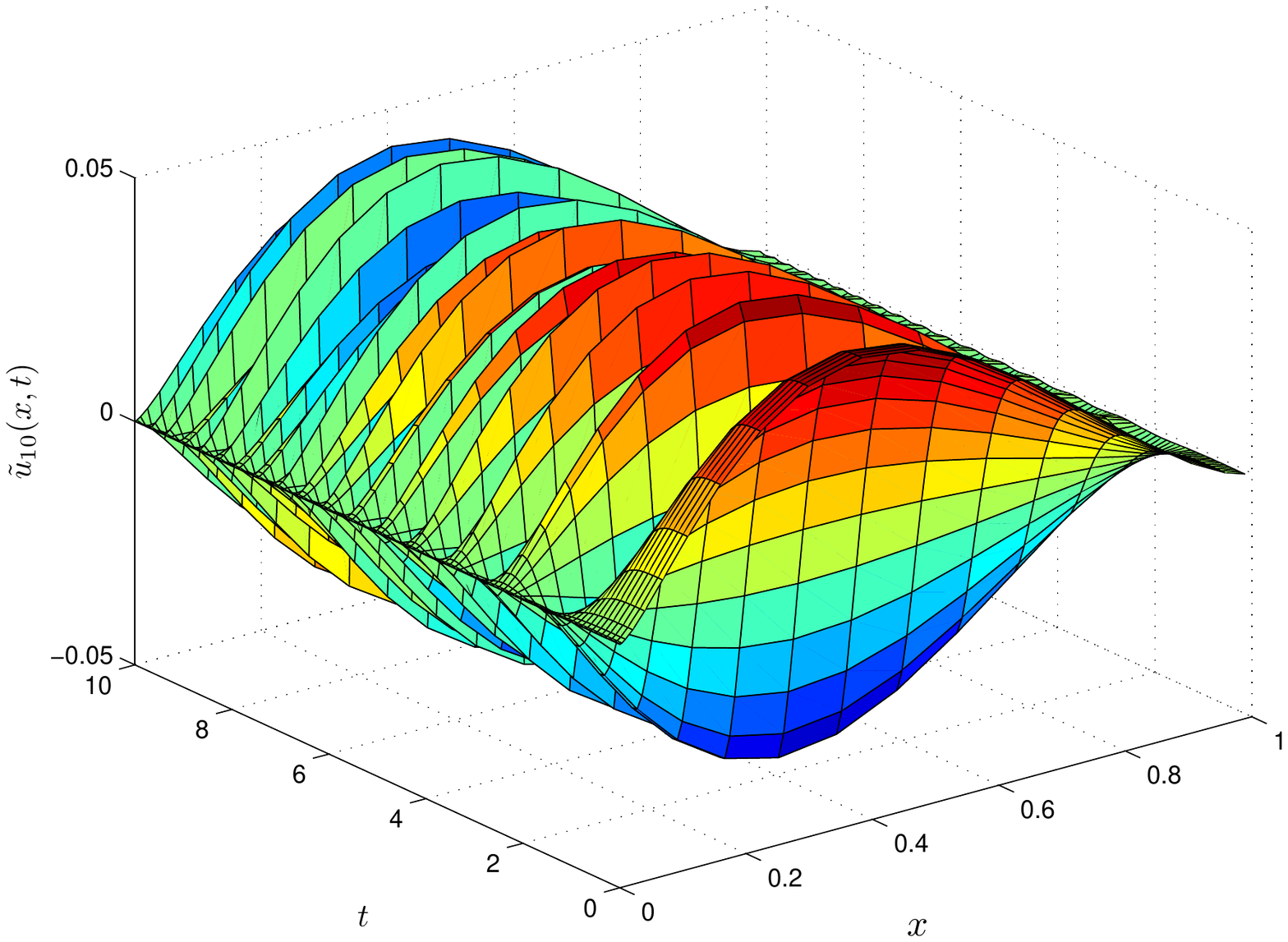}\includegraphics[width=7cm,height=20cm,keepaspectratio=true]{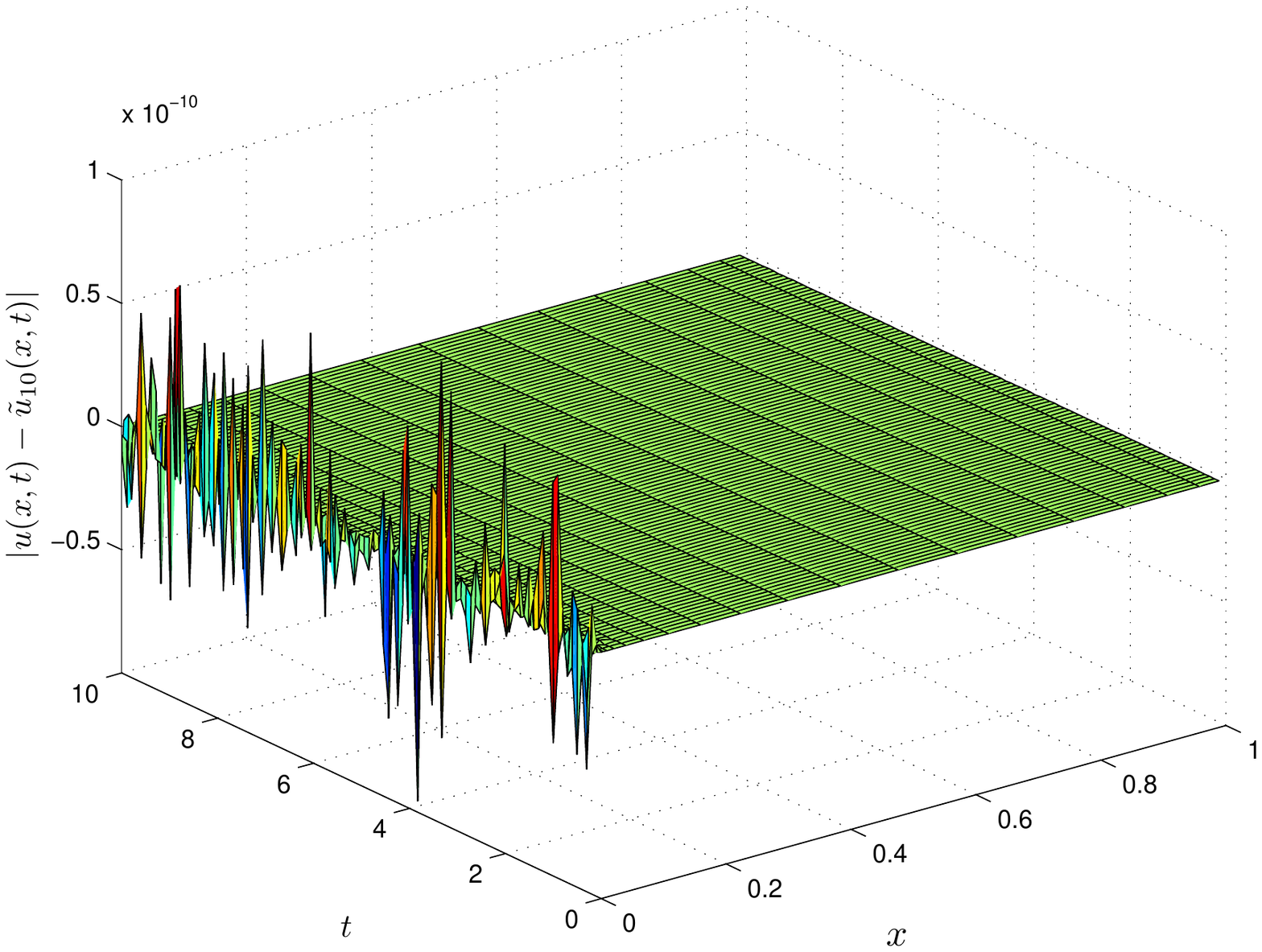}\\\vspace{-1.8cm}
		\caption{The exact solution together with the absolute error with $\alpha=0.5,\ \beta=\eta=1,\  N=20$ for the case $\sigma=0.5$ and $(x,t)\in[0,1]\times[0,10]$.}
		\label{Fig-81}
	\end{figure}
	\begin{figure}[htbp]
		\vspace{-2.5cm}
		\centering
		\includegraphics[width=7cm,height=20cm,keepaspectratio=true]{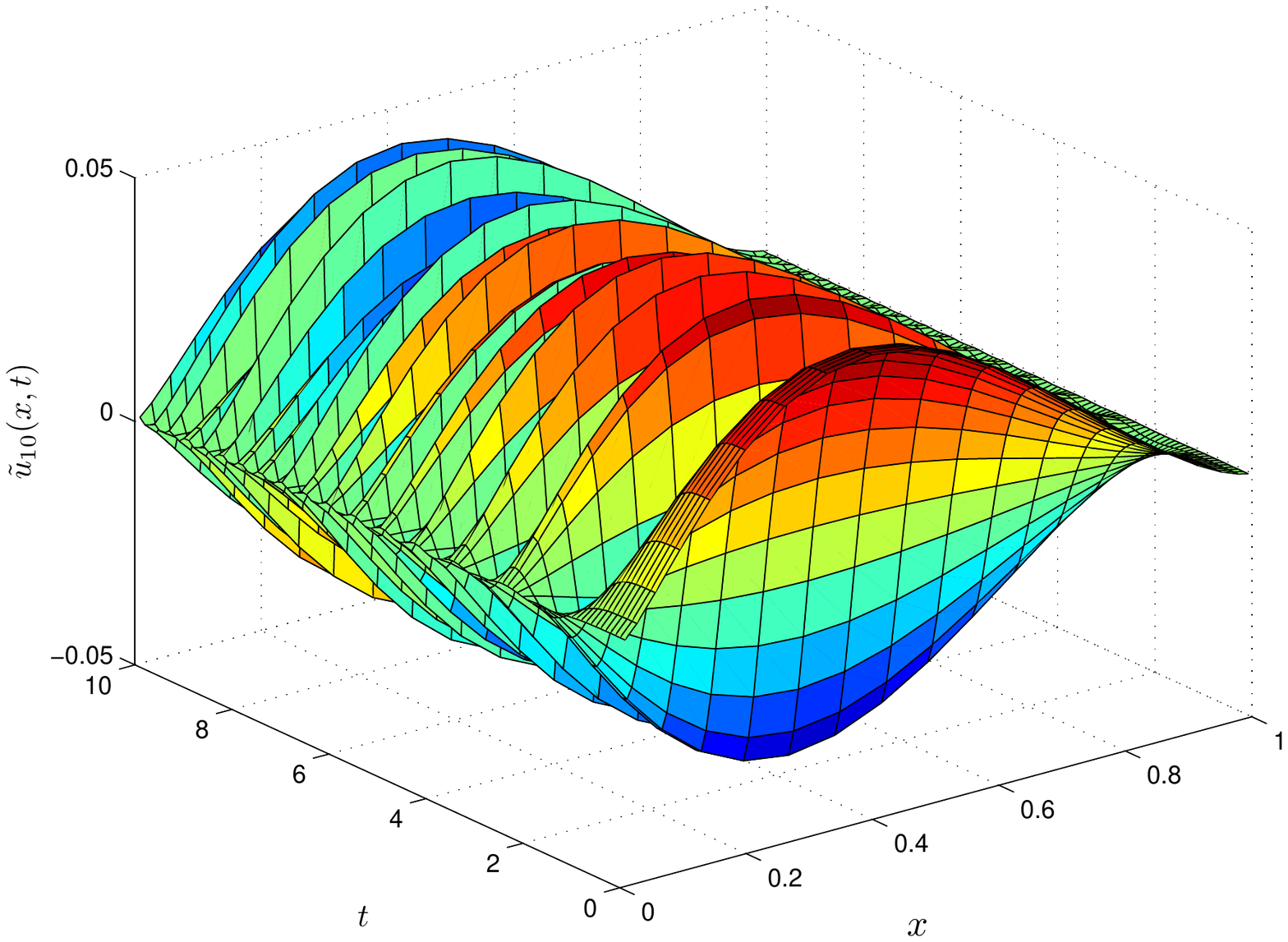}\includegraphics[width=7cm,height=20cm,keepaspectratio=true]{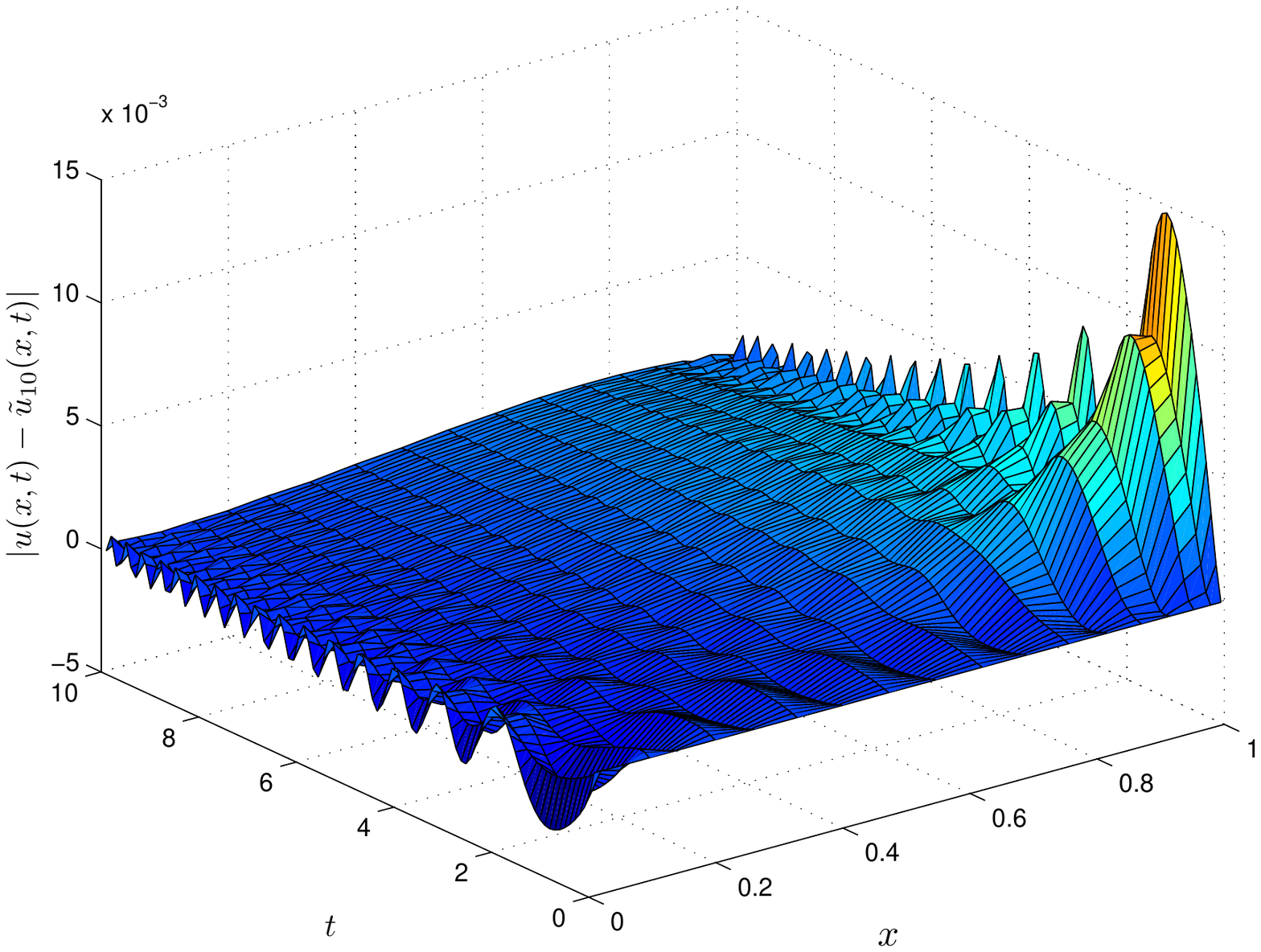}\\\vspace{-1.8cm}
		\caption{The exact solution together with the absolute error with $\alpha=0.5,\ \beta=\eta=1,\  N=20$ for the case $\sigma=1$ and $(x,t)\in[0,1]\times[0,10]$.}
		\label{Fig-82}
	\end{figure}
	It can be easily observed from \cref{Fig-81} and \cref{Fig-82} that when the solutions have singularity on its domain, it is not a good idea to use a smooth basis function (when $\sigma=1$) to approximate them numerically.  
\end{example}
\section{Concluding remarks}	
This paper presents two new non-classical Lagrange basis functions which are,  in fact, generalizations of all the previous Lagrange basis functions. Theoretical results with respect to these basis functions are developed in detail. Some numerical experiments are provided to verify the theoretical results. Some future works are listed below:
\begin{itemize}
	\item The use of these non-classical Lagrange basis functions to develop the numerical methods for 2D and 3D partial differential equations.
	\item Applications of the newly introduced basis functions to solve various problems such as: ordinary and fractional calculus of variations, optimal control problems and integral equations.
	\item The use of the non-classical Lagrange basis functions to establish new finite elements, finite volume, least square  and discontinuous Galerkin methods. 
	\item The use of more stable approaches to obtain EK fractional differentiation matrices.
	\item The use of various types $w(x)$ and $g(x)$ in the non-classical Lagrange basis functions \eqref{Lag} to solve the problems in semi-infinite and infinite domains.
	\item Application of the non-classical Lagrange basis functions \eqref{Lag} to develop the numerical methods for the problems with variable order (distributed order) integrals and derivatives. 
\end{itemize}


\end{document}